\DeclareMathAlphabet{\mathpzc}{OT1}{pzc}{m}{it}
\newcommand{\xyGgraph}{\xymatrixcolsep{1.5pc}\xymatrixrowsep{3pc}\xymatrix}
\newcommand{\xyPoset}{\xymatrixcolsep{.5pc}\xymatrixrowsep{.7pc}\xymatrix}
\newcommand{\xyGraph}{\xymatrixcolsep{1.5pc}\xymatrixrowsep{2.5pc}\xymatrix}
\newcommand{\xyto}{\xymatrixcolsep{1pc}\xymatrix}
\newcommand{\rel}{S^{W_\Theta}_W}
\newcommand{\xymap}{\xymatrixcolsep{5pc}\xymatrixrowsep{1pc}\xymatrix}
\newcommand{\sgn}{\operatorname{sgn}}
\newcommand{\Hom}{\operatorname{Hom}}
\newcommand{\spa}{\operatorname{span}}
\newcommand{\sra}{\shortrightarrow}
\newcommand{\End}{\operatorname{End}}
\newcommand{\Sym}{\operatorname{Sym}}
\newcommand{\C}{{\mathbb C}}
\newcommand{\R}{{\mathbb R}}
\newcommand{\N}{{\mathbb N}}
\theoremstyle{theorem}
\newtheorem{theorem}{Theorem}
\newtheorem*{problem*}{Problem}
\newtheorem*{question*}{Question}
\newtheorem{definition}{Definition}[section]
\newtheorem*{remarks*}{Remarks}
\newtheorem*{claim*}{Claim}
\newtheorem*{remark*}{Remark}
\newtheorem*{hlt*}{Hard Lefschetz Theorem}
\newtheorem*{basisthm*}{Basis Theorem}
\newtheorem*{relbasisthm*}{Relative Basis Theorem}
\newtheorem*{primdecomp*}{Primitive Decomposition Theorem}
\newtheorem{proposition}[definition]{Proposition}
\newtheorem{lemma}[definition]{Lemma}
\newtheorem{corollary}[definition]{Corollary}
\numberwithin{equation}{section}
\begin{document}
\title[Lefschetz property for coinvariant rings]{The strong Lefschetz property for coinvariant rings of finite reflection groups}
\author{Chris McDaniel}
\address{Dept. of Math. and Stat.\\
University of Massachusetts\\
Amherst, MA 01003}
\email{mcdaniel@math.umass.edu}
\begin{abstract}
In this paper we prove that a deformed tensor product of two Lefschetz algebras is a Lefschetz algebra.  We then use this result in conjunction with some basic Schubert calculus to prove that the coinvariant ring of a finite reflection group has the strong Lefschetz property.  
\end{abstract}
\maketitle

\section{Introduction}
In this paper we study the strong Lefschetz properties of the coinvariant ring $S_W$ associated to a finite reflection group $W$.  A commutative graded ring of the form $R=\bigoplus_{i=0}^r R^i$ with $R^0=\R$ is called an \emph{$\N$-graded Artinian $\R$-algebra} and $R$ is said to have the \emph{strong Lefschetz property} if there exists an element $l\in R^1$ such that the multiplication maps $l^{r-2i}\colon R^i\rightarrow R^{r-i}$ are isomorphisms for $0\leq i\leq\lfloor\frac{r}{2}\rfloor$.  The element $l$ is called a \emph{Lefschetz element} and the pair $(R,l)$ is called a \emph{Lefschetz algebra}.

We prove the following result:
\begin{theorem}
\label{thm:coinvlef}
If $W$ is a finite reflection group then $S_W$ has the strong Lefschetz property.
\end{theorem}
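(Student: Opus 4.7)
The plan is to prove the theorem by induction on the rank of $W$, combining the deformed tensor product theorem stated in the abstract with the parabolic structure of coinvariant rings.

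First I would reduce to the case of irreducible $W$: if $W = W_1 \times W_2$ then $S_W \cong S_{W_1} \otimes S_{W_2}$ as graded algebras, which is a trivial (undeformed) instance of the main theorem, so it is enough to handle each irreducible factor. The base cases are the rank-one group $A_1$, which is obviously Lefschetz, and the dihedral groups $I_2(m)$, where $S_W \cong \R[x,y]/(q,f)$ with $\deg q = 2$ and $\deg f = m$; here I would verify directly that a generic linear form $l$ has $l^{m-2}\colon(S_W)^1\to(S_W)^{m-1}$ an isomorphism.

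For the inductive step, I would pick a maximal proper standard parabolic subgroup $W_\Theta\subsetneq W$ and use the classical Chevalley-style decomposition
\[
S_W \;\cong\; S_{W_\Theta} \otimes \rel
\]
as graded $\R$-vector spaces, where $\rel$ denotes the ring of $W_\Theta$-invariants in $S_W$ and has a basis indexed by minimum-length coset representatives in $W^\Theta$. This identification should fit into the framework of a deformed tensor product in the sense of the main theorem, reducing the problem to showing that each factor is a Lefschetz algebra. The factor $S_{W_\Theta}$ is Lefschetz by the inductive hypothesis, since $W_\Theta$ has strictly smaller rank.

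The remaining principal task is to exhibit a Lefschetz element on $\rel$. When $W$ is a Weyl group this ring is identified with $H^*(G/P_\Theta;\R)$ for the generalized flag variety $G/P_\Theta$, and the Hard Lefschetz Theorem---applied to an ample class and read off in the Schubert basis of Poincar\'e-dual classes---supplies the required element; this is the ``basic Schubert calculus'' promised in the abstract. The main obstacle is therefore the non-crystallographic irreducible cases $H_3$ and $H_4$ (and residual dihedral cases), where no flag variety is available: for these one must establish SLP on $\rel$ by a direct algebraic argument using the Schubert-like basis and the Poincar\'e pairing that $\rel$ inherits from $S_W$, presumably reducing via a careful choice of $W_\Theta$ (e.g.\ of type $H_2\times A_1$ inside $H_3$, or of type $H_3$ inside $H_4$) so that the inductive Lefschetz data on $\rel$ can be verified by hand. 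Once Lefschetz is known on both tensor factors, the deformed tensor product theorem assembles SLP for $S_W$.
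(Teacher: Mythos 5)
Your skeleton coincides with the paper's: reduce to irreducible $W$, induct on rank, use the decomposition of Proposition \ref{thm:coinvlh} (freeness of $S_W$ over $\rel$ and $\ker\iota=(\rel)^+\cdot S_W$) to feed Theorem \ref{thm:fibun}, and reduce everything to SLP for the single relative factor $\rel$ with $W_\Theta$ maximal. Where you diverge is in how SLP for $\rel$ is established, and this is where the proposal has a genuine gap. For Weyl groups you invoke the hard Lefschetz theorem on $H^*(G/P_\Theta)$; that is logically valid but is precisely the deep input the paper is written to avoid, and it is also unnecessary: the paper handles \emph{all} types uniformly by exhibiting the explicit candidate $\bar{\rho}=\frac{1}{2}\sum_{\gamma\in\Phi^+\setminus\Phi^+_\Theta}\gamma$, proving $\check{\alpha}(\bar{\rho})>0$ for every $\alpha\in\Phi^+\setminus\Phi^+_\Theta$ (Lemma \ref{sublem:relpos}), and expressing the Lefschetz matrices in the relative Schubert basis as weighted path matrices over the Bruhat graph of $W^\Theta$ via the BGG commutator formula (Proposition \ref{prop:matentr}), so that nonvanishing follows from positivity of weights and, where needed, the Gessel--Viennot lemma (Proposition \ref{prop:GV}).

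The gap is that your treatment of the non-crystallographic cases --- the only cases where the theorem was not already known conceptually, and where your HLT crutch is unavailable --- is a placeholder: ``verified by hand,'' with no candidate Lefschetz element for $\rel$ and no mechanism for proving the required determinants nonzero. Concretely, two things fail. First, your suggested parabolic of type $H_2\times A_1$ inside $H_3$ does not exist: the proper subdiagrams of the $H_3$ diagram yield only $I_2(5)$, $A_2$, and $A_1\times A_1$; the correct choice (used in Proposition \ref{prop:altypeslefschetz}) is $W_\Theta$ of type $I_2(5)$, for which all graded pieces of $\rel$ are one-dimensional, so positivity of the path weights --- itself resting on Lemma \ref{sublem:relpos} --- already suffices. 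Second, for $H_4/H_3$ (your choice here matches the paper's) the graded pieces of $\rel$ have dimension up to $4$ and ``by hand'' is not a proof: the paper must combine the Bruhat antiautomorphism symmetry (Proposition \ref{prop:symmetric}) to factor each Lefschetz matrix as $A^t\cdot B\cdot A$, verify positive definiteness of the middle-degree matrix $B$ (Lemma \ref{prop:E8H4}), thereby reduce SLP to the weak Lefschetz property, and then check injectivity degree by degree with explicitly computed edge weights involving $a=\frac{1+\sqrt{5}}{4}$ and $b=\frac{-1+\sqrt{5}}{4}$ (Proposition \ref{prop:H4E8lef}). None of this content, which is the mathematical heart of the paper, is supplied or even sketched in your proposal, so as written the argument proves the theorem only for crystallographic $W$ --- where it was already a consequence of hard Lefschetz.
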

The result of Theorem \ref{thm:coinvlef} is not new.  If $W$ is crystallographic, then a classical result of Borel states that $S_W$ is isomorphic to the cohomology ring of an associated flag variety $G/B$.  In this case Theorem \ref{thm:coinvlef} follows from the hard Lefschetz theorem in algebraic geometry.  It should be added that the hard Lefschetz theorem is highly non-trivial:  Hodge proved it over $\C$ using his theory of harmonic integrals (now called Hodge theory) and later Deligne proved it in characteristic $p$ using a version of the Riemann hypothesis for varieties over finite fields; see \cite{M} for more details.  For non-crystallographic $W$, Theorem \ref{thm:coinvlef} has been verified by direct computation in types $I_2(m)$ and $H_3$ by Maeno, Numata and Wachi \cite{MNW} and in type $H_4$ by Numata and Wachi \cite{NW}.  Here also it should be added that the computations for type $H$ in \cite{MNW} and \cite{NW} are very large and are carried out using the computer algebra package Macaulay2.

In this paper we give a new proof of Theorem \ref{thm:coinvlef} that is both conceptual, in that it does not involve heavy computations, and elementary, in that it uses only algebra, combinatorics of finite reflection groups and some basic Schubert calculus.

One of the key ideas in this paper stems from algebraic topology.  Let $G$ be a semi-simple linear algebraic group and $B\subseteq G$ a Borel subgroup.  If $P\supseteq B$ is a parabolic subgroup of $G$ there is a fiber bundle of topological spaces
\[\xymatrix{P/B\ar[r]^-{\iota} & G/B\ar[d]^-{\pi}\\
& G/P.\\}\]
The Leray-Hirsch theorem from algebraic topology (see \cite{Ha} page 432) implies that the cohomology ring of $G/B$ is a free module over the cohomology ring of $G/P$ with  basis in one-to-one correspondence with an $\R$-basis for the cohomology ring of $P/B$.  More precisely the induced map 
$$\pi^*\colon H(G/P)\rightarrow H(G/B)$$ 
gives $H(G/B)$ the structure of a free $H(G/P)$-module, and the induced map $$\iota^*\colon H(G/B)\rightarrow H(P/B)$$ 
is surjective with $\ker(\iota^*)=\left(H(G/P)\right)^+\cdot H(G/B)$.  Equivalently, for any graded vector space section $s\colon H(P/B)\rightarrow H(G/B)$ of $\iota^*$, the map
$$\xymap{H(G/P)\otimes_\R H(P/B)\ar[r]^-{\cong} & H(G/B)\\
b\otimes f\ar@{|->}[r] & \pi^*(b)\cdot s(f)\\}$$
is an $H(G/P)$-module isomorphism.

The cohomology ring of $G/P$ is isomorphic (via a degree halving map) to the ring of ``relative" coinvariants $S_W^{W'}$, where $W'\subset W$ is the parabolic subgroup corresponding to $P$.  The cohomology ring of $P/B$ is isomorphic to the coinvariant ring of $W'$.  Hence in terms of coinvariant rings, the Leray-Hirsch theorem implies that we have an isomorphism of $S^{W'}_W$-modules:
\begin{equation}
\label{eq:lhthm}
S^{W'}_W\otimes_\R S_{W'}\cong S_W
\end{equation}
It turns out that \eqref{eq:lhthm} holds for all pairs of finite reflection groups and parabolic subgroups, crystallographic or not.  We show that if the factors $S_W^{W'}$ and $S_{W'}$ have the strong Lefschetz property, then $S_W$ also has the strong Lefschetz property.  In fact we prove the following more general result phrased in terms of Lefschetz algebras:
\begin{theorem}
\label{thm:fibun}
Let $(B,\lambda)$ and $(F,\tau)$ be Lefschetz algebras.  Let $E=\bigoplus_{i=0}^eE^i$ be an $\N$-graded Artinian $\R$-algebra equipped with $\R$-algebra homomorphisms
$$\pi\colon B\rightarrow E$$
and
$$\iota\colon E\rightarrow F.$$
Suppose that 
\begin{enumerate}
\item $E$ is a \emph{free} $B$-module via $\pi$
\item $\iota$ is surjective with $\ker\{\iota\}=B^+\cdot E$.
\end{enumerate}
Then for any $x\in\iota^{-1}(\tau)$ the pair $(E,\pi(\lambda)+tx)$ is a Lefschetz algebra for some $t\in\R$.
\end{theorem}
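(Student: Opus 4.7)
\emph{Strategy.} The plan is to use a Leray--Hirsch-type isomorphism $E\cong B\otimes_\R F$ to translate the problem onto $B\otimes F$, and then reduce to the classical ``tensor product of Lefschetz algebras is Lefschetz'' statement by a rescaling/openness argument.

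Hypotheses (i) and (ii) yield a graded $B$-module isomorphism $\Phi\colon B\otimes_\R F\xrightarrow{\cong} E$, $b\otimes f\mapsto \pi(b)\cdot s(f)$, for any graded linear section $s\colon F\to E$ of $\iota$: freeness of $E$ over $B$ combined with $E/(B^+\cdot E)\cong F$ pins down the rank, and a lift of an $\R$-basis of $F$ is then a $B$-basis of $E$. Writing $b$ and $f$ for the top degrees of $B$ and $F$, the top degree of $E$ is $e=b+f$, and combining $\dim E^i=\sum_{j+q=i}\dim B^j\dim F^q$ with the Poincar\'e duality satisfied by the Lefschetz algebras $B$ and $F$ gives $\dim E^i=\dim E^{e-i}$. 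So it suffices to find a single $t\in\R$ for which $(\pi(\lambda)+tx)^{e-2i}\colon E^i\to E^{e-i}$ is injective for every $i\le e/2$.

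Transferring ``multiplication by $\pi(\lambda)+tx$'' through $\Phi$ yields an operator $T_t=\lambda\otimes 1+t\widetilde X$ on $B\otimes F$, where $\widetilde X$ is $B$-linear (by commutativity of $E$) of total degree $1$. Decomposing $\widetilde X=\sum_{p\ge 0}M_p$ with $M_p$ of bidegree $(p,1-p)$, the identity $x\cdot s(f)-s(\tau f)\in B^+\cdot E$ (forced by $\iota(x)=\tau$ and $\iota\circ s=\operatorname{id}_F$) shows $M_0=1\otimes\tau$. Introduce the graded automorphism $\rho_t$ of $B\otimes F$ scaling $B^j\otimes F^q$ by $t^q$ (for $t\ne 0$). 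A direct conjugation computation, using that $M_p$ shifts $F$-degree by $1-p$ while $\lambda\otimes 1$ preserves $F$-degree, yields
\[\rho_t^{-1}T_t\rho_t=\lambda\otimes 1+M_0+tM_1+t^2M_2+\cdots,\]
whose $t\to 0$ limit is exactly the classical tensor-product candidate $\lambda\otimes 1+1\otimes\tau$.

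At this point I would invoke the classical fact that $(B\otimes_\R F,\;\lambda\otimes 1+1\otimes\tau)$ is a Lefschetz algebra, cleanly proved via the equivalence between Lefschetz algebras and finite-dimensional $\mathfrak{sl}_2$-representations: the diagonal $\mathfrak{sl}_2$-action on $B\otimes F$ has raising operator $\lambda\otimes 1+1\otimes\tau$, and every finite-dimensional $\mathfrak{sl}_2$-representation automatically satisfies the Lefschetz property. Consequently $(\rho_t^{-1}T_t\rho_t)^{e-2i}\colon (B\otimes F)^i\to (B\otimes F)^{e-i}$ is an isomorphism in the limit $t=0$, hence throughout a punctured neighborhood of $0$ by continuity of determinants. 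Since $\rho_t$ is a graded automorphism of $B\otimes F$ for $t\ne 0$, conjugation preserves isomorphism-ness of degree-graded maps, so $T_t^{e-2i}$ is itself an isomorphism for small $t\ne 0$; transferring back through $\Phi$ gives the result. The main obstacle, and essentially the only nonformal ingredient, is the ``tensor product of Lefschetz algebras is Lefschetz'' fact invoked in this paragraph; everything else is Leray--Hirsch bookkeeping plus openness of the Lefschetz condition.
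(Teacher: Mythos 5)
Your proof is correct, and at its heart it is the same deformation argument as the paper's, repackaged in a cleaner, coordinate-free way. Your rescaling $\rho_t$ (scaling $B^j\otimes F^q$ by $t^q$) is precisely the paper's map $\chiup_t$ transported through $\pi\otimes s$, and your conjugated family $\lambda\otimes 1+M_0+tM_1+t^2M_2+\cdots$ is precisely the paper's operator $\Lambda+A_t$ (compare Lemmas \ref{lem:chits} and \ref{lem:chio}); the endgame — polynomiality of the determinants in $t$ and nonvanishing at $t=0$, then undoing the conjugation for some $t_0\neq 0$ — is identical. Two genuine differences are worth recording. First, to define $A_t$ and $\chiup_t$ by hand the paper must build a section $s$ adapted to the primitive decomposition of $(F,\tau)$, setting $s(\tau^j\cdot p)=x^j\cdot\tilde{s}(p)$, and introduce the twisted endomorphisms $\phi_t$; your bidegree decomposition $\widetilde{X}=\sum_{p\geq 0}M_p$, with $M_0=1\otimes\tau$ forced by $\ker\iota=B^+\cdot E$, works for an \emph{arbitrary} graded section and turns the paper's two commutative-diagram lemmas into a one-line conjugation computation — a real simplification. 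Second, for the key input that $\left(B\otimes_\R F,\lambda\otimes 1+1\otimes\tau\right)$ is a Lefschetz algebra (Lemma \ref{lem:prod}), the paper reduces to $P(n)\otimes P(m)$ via the Primitive Decomposition Theorem and quotes the nonvanishing of a binomial determinant (\cite{AZ}, \cite{BBFK2}), whereas you invoke the standard dictionary between Lefschetz structures and finite-dimensional $\mathfrak{sl}_2$-representations and use complete reducibility; both routes are standard, yours being more conceptual and the paper's more elementary. One cosmetic remark: your preliminary reduction to injectivity via the symmetry $\dim E^i=\dim E^{e-i}$ is harmless but redundant, since your determinant argument establishes bijectivity directly, and (as in the paper) one should note that the finitely many degrees $k$ are handled by a single small $t\neq 0$, which your polynomial nonvanishing does provide.
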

The proof of Theorem \ref{thm:coinvlef} is by induction on the rank of $W$.  For each type of finite reflection group $W$ we give a (maximal) parabolic subgroup $W'\subset W$ and write $S_W\cong S_{W}^{W'}\otimes_\R S_{W'}$ as in \eqref{eq:lhthm}.  The Lefschetz computation for $S_W^{W'}$ is then reduced to a simple counting argument using Schubert calculus together with some other combinatorial tidbits, while the induction hypothesis implies that $S_{W'}$ has the strong Lefschetz property.   Theorem \ref{thm:fibun} then implies that $S_W$ also has the strong Lefschetz property.   

This paper is organized as follows.  In Section 2 we define Lefschetz algebras, describe some of their basic properties, and proceed to prove Theorem \ref{thm:fibun}.  In Section 3 we give some basic notions and results on finite reflection groups, coinvariant rings and their Schubert calculi.  We also establish a Leray-Hirsch type result as in \eqref{eq:lhthm}.  In Section 4 we investigate the Lefschetz properties of relative coinvariant rings using the classification of finite reflection groups to carry out a type-by-type analysis.  In Section 5 we combine the results obtained in the preceding sections to prove Theorem \ref{thm:coinvlef}.  In Section 6 we give some concluding remarks.

\section{Lefschetz Algebras and the Proof of Theorem \ref{thm:fibun}}
Throughout this paper all homomorphisms are graded of degree zero unless otherwise indicated.  We use the notation $R[i]$ to denote the graded object $R$ shifted up by $i$ (i.e. $(R[i])^j=R^{j+i}$).

An $\N$-graded Artinian $\R$-algebra is a commutative graded finite-dimensional ring of the form $R=\bigoplus_{i=0}^rR^i$ such that $R^0=\R$.  A \emph{Lefschetz element} is an element $l\in R^1$ such that the multiplication maps $l^{r-2i}\colon R^i\rightarrow R^{r-i}$ are isomorphisms for $0\leq i\leq\lfloor\frac{r}{2}\rfloor$.
 
\begin{definition}
\label{def:lefalg}
A \emph{Lefschetz algebra} is a pair $(R,l)$ consisting of an $\N$-graded Artinian $\R$-algebra $R$ together with a fixed Lefschetz element $l\in R^1$ for $R$.
\end{definition}

It will be convenient to think of the $\R$-algebra $R$ as a module over the polynomial ring in one variable $\R[X]$ (with the usual grading), where $X$ acts on $R$ by multiplication by $l$.  In fact any degree one endomorphism $A\colon R\rightarrow R[1]$ defines a graded $\R[X]$-module structure on $R$ by defining 
$$X^i\cdot f\coloneqq A^i(f).$$

The simplest non-trivial example of a Lefschetz algebra is the polynomial ring in one variable divided by a monomial:
\begin{equation}
\label{eq:simple}
P(n)\coloneqq\frac{\R[X]}{\langle X^{n+1}\rangle}.
\end{equation}
Here the fixed Lefschetz element is the equivalence class of $X\in\R[X]$.
Given a Lefschetz algebra $(R,l)$, where $R=\bigoplus_{i=0}^r R^i$, define the homogeneous subspace $P=\bigoplus_{i=0}^rP^i\subset R$ by
$$P^i\coloneqq
\begin{cases}
\ker\left\{l^{r-2i+1}\colon R^i\rightarrow R^{r-i+1}\right\} & \text{if $0\leq i\leq\lfloor\frac{r}{2}\rfloor$}\\
P^i=0 & \text{if $i>\frac{r}{2}$}.\\
\end{cases}$$
The subspace $P$ is called the \emph{primitive} subspace of $(R,l)$.

\begin{primdecomp*}
\label{prop:primdecomp}
There is an isomorphism of vector spaces
$$R=\bigoplus_{i=0}^{\lfloor\frac{r}{2}\rfloor}\left(\bigoplus_{j=0}^{r-2i}l^j\cdot P^i\right).$$
\end{primdecomp*}
\begin{proof}
The proof is easy and we leave it as an exercise for the reader.
\end{proof}

The following lemma plays an important role in the proof of Theorem~\ref{thm:fibun}.
\begin{lemma}
\label{lem:prod}
Let $(U,\mu)$ and $(V,\nu)$ be two Lefschetz algebras.  Define the $\N$-graded Artinian $\R$-algebra 
$$W\coloneqq U\otimes_\R V.$$
Let $\omega\coloneqq\mu\otimes 1+1\otimes\nu\in W^1$.  Then $(W,\omega)$ is a Lefschetz algebra.
\end{lemma}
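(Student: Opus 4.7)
The plan is to use the Primitive Decomposition Theorem to reduce the problem to the model case of truncated polynomial algebras $P(m)$ from \eqref{eq:simple}, and then to handle that model case via the representation theory of $\mathfrak{sl}_2(\R)$. Fix a homogeneous basis $\{p_\alpha\}$ of the primitive subspace of $(U,\mu)$ with $p_\alpha\in U^{i_\alpha}$. Each cyclic submodule $\R[\mu]\cdot p_\alpha$ is an $\R[X]$-submodule of $U$ (with $X$ acting as $\mu$), and is isomorphic as a graded $\R[X]$-module to a shifted truncated polynomial ring $P(u-2i_\alpha)[-i_\alpha]$, living in degrees $i_\alpha,\ldots,u-i_\alpha$, symmetrically placed about $u/2$. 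The Primitive Decomposition Theorem says that $U$ is the direct sum of these cyclic submodules, and likewise for $V$ with primitive basis $\{q_\beta\}$, $q_\beta\in V^{j_\beta}$. Taking the tensor product, and noting that $\omega=\mu\otimes 1+1\otimes\nu$ restricts on each outer piece to the natural Leibniz-type action $x\otimes 1+1\otimes y$ (up to the shift $[-i_\alpha-j_\beta]$), gives a graded $\R[\omega]$-module decomposition
\[
W\;\cong\;\bigoplus_{\alpha,\beta}\bigl(P(u-2i_\alpha)\otimes P(v-2j_\beta)\bigr)[-i_\alpha-j_\beta].
\]

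The key step is then the following model case: for all $m,n\geq 0$, the algebra $P(m)\otimes P(n)\cong\R[x,y]/(x^{m+1},y^{n+1})$ with element $x+y$ is a Lefschetz algebra, and as an $\R[X]$-module with $X=x+y$ it decomposes as $\bigoplus_{k=0}^{\min(m,n)}P(m+n-2k)[-k]$. I would prove this using $\mathfrak{sl}_2$-representation theory: equip each $P(m)$ with the structure of the $(m+1)$-dimensional irreducible representation of $\mathfrak{sl}_2(\R)$ in which the raising operator $e$ acts as multiplication by $x$ and the weight operator $h$ acts as $2i-m$ on the degree-$i$ piece, with $f$ then determined by the commutation relations. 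The tensor product is an $\mathfrak{sl}_2$-module whose raising operator is $e\otimes 1+1\otimes e=\omega$, and Clebsch--Gordan yields the claimed decomposition. Each summand $P(m+n-2k)[-k]$ is a single $\omega$-string in degrees $k$ through $m+n-k$, symmetrically placed about $(m+n)/2$, which is exactly the data needed for the Lefschetz property.

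Assembling the pieces, each outer summand of $W$ further decomposes into $\omega$-strings $P(u+v-2i_\alpha-2j_\beta-2k)[-i_\alpha-j_\beta-k]$, each occupying degrees $i_\alpha+j_\beta+k$ through $u+v-i_\alpha-j_\beta-k$ and therefore sitting symmetrically about the middle degree $(u+v)/2$ of $W$. Summing over all these strings gives a primitive decomposition for $(W,\omega)$, and this immediately implies that $\omega^{(u+v)-2d}\colon W^d\to W^{(u+v)-d}$ is an isomorphism for every $d\leq(u+v)/2$, which is the required Lefschetz property.

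The main obstacle is establishing the model case, since everything else is formal bookkeeping once it is available. The $\mathfrak{sl}_2$-route is clean but relies on identifying $P(m)$ with the standard irreducible and invoking Clebsch--Gordan; a more hands-on alternative is to exhibit explicit primitive elements in $P(m)\otimes P(n)$ in each degree $k\leq\min(m,n)$ (e.g., expressions of the form $\sum_i(-1)^i c_i\, x^{k-i}\otimes y^i$ for suitable binomial coefficients $c_i$), and then verify by direct computation that they span a complement to $(x+y)\cdot(P(m)\otimes P(n))$ of the correct dimension. Either route pins down the model case, and then the reduction above completes the proof.
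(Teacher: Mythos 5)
Your proof is correct, and its outer skeleton coincides with the paper's: the paper likewise invokes the Primitive Decomposition Theorem to reduce to the model case $U=P(n)$, $V=P(m)$. Where you genuinely diverge is in how the model case is settled. The paper computes the matrix of the Lefschetz map on $\R[X,Y]/\langle X^{n+1},Y^{m+1}\rangle$ in the monomial basis, observes that its entries are the binomial coefficients \eqref{eq:binomialtensor}, and then cites a determinant-evaluation theorem (\cite{AZ} Chapter 23, proved there by the Gessel--Viennot lemma; alternatively \cite{BBFK2} Proposition 5.7) for the nonvanishing of that determinant. You instead endow $P(m)$ with the structure of the $(m+1)$-dimensional irreducible $\mathfrak{sl}_2(\R)$-module, with $e$ acting by multiplication by $x$, $h$ by $2i-m$ in degree $i$, and $f(x^i)=i(m+1-i)x^{i-1}$; then $\omega$ is the raising operator $e\otimes 1+1\otimes e$ on the tensor product, Clebsch--Gordan gives the string decomposition $\bigoplus_{k=0}^{\min(m,n)}P(m+n-2k)[-k]$, and the Lefschetz property follows from the standard fact that $e^w$ is an isomorphism between opposite weight spaces of any finite-dimensional $\mathfrak{sl}_2$-module (all of which is valid over $\R$, since complete reducibility and Clebsch--Gordan hold over any field of characteristic zero). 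Your bookkeeping that each resulting string $P(u+v-2i_\alpha-2j_\beta-2k)[-i_\alpha-j_\beta-k]$ is centered at the middle degree $(u+v)/2$ of $W$ is exactly what makes the conclusion immediate, and is correct. Comparing the two: the paper's route is more computational but self-contained modulo the cited determinant theorem, while yours replaces the determinant evaluation by representation theory and actually yields strictly more, namely an explicit primitive decomposition of $(W,\omega)$ into $\omega$-strings rather than just the nonvanishing of the Lefschetz determinants; your suggested hands-on fallback (exhibiting explicit primitives $\sum_i(-1)^ic_i\,x^{k-i}\otimes y^i$ and a dimension count) would also work and is essentially a proof of Clebsch--Gordan in this special case.
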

\begin{proof}
We only sketch the proof here and leave the details to the reader.  One can show, using the Primitive Decomposition Theorem, that it suffices to prove Lemma \ref{lem:prod} for the case where $U=P(n)$ and $V=P(m)$, for $n\leq m$.  Taking $X$, $Y$ to be the Lefschetz elements for $U$, $V$ respectively, our candidate Lefschetz element for $W$ is $\omega\coloneqq X+Y\in W=\R[X,Y]\big/\langle X^n,Y^m\rangle$.  It is straightforward to compute that the matrix for the Lefschetz map
$$\omega\colon W^i\rightarrow W^{d-i}$$ in the natural monomial basis (ordered by powers of $X$) is given by $\left(C^i_{jk}\right)$ where 
\begin{equation}
\label{eq:binomialtensor}
C^i_{jk}=\begin{cases}
\binom{d-2i}{n-i+j-k} \ \ \ 0\leq j,k\leq i & \text{if} \ \ \ 0\leq i\leq n\leq m\leq d\\
\binom{d-2i}{j-k} \ \ \ 0\leq j,k\leq n & \text{if} \ \ \ 0\leq n\leq i\leq m\leq d.\\
\end{cases}
\end{equation}
By a theorem in \cite{AZ} Chapter 23, the determinant of the matrix with binomial entries \eqref{eq:binomialtensor} is non-zero.  Barthel, Brasselet, Fieseler and Kaup give an alternative coordinate free proof in \cite{BBFK2} Proposition 5.7.
\end{proof}

The remainder of this section will be devoted to the proof of Theorem \ref{thm:fibun}.
With notations as in the statement of Theorem \ref{thm:fibun}, let $B=\bigoplus_{i=0}^bB^i$ and $F=\bigoplus_{i=0}^f F^i$ and fix an element $x\in\iota^{-1}(\tau)\in E^1$.
Consider $E$ as a $B$-module via $\pi$ and let $\End^1_B(E)$ denote the graded $B$-module endomorphisms of degree $1$.  Any choice of $A\in\End_B^1(E)$ endows 
$E$ with a $B[X]$-module structure by the prescription
$$(bX^i)\cdot e\coloneqq \pi(b)\cdot(A^i(e))$$
for all $b\in B$ and $e\in E$, where the dot on the RHS denotes multiplication in $E$.

Note that any $\R$-vector space section 
$$s\colon F\rightarrow E$$
of the surjective ring homomorphism $\iota\colon E\rightarrow F$ yields a $B$-module isomorphism
\[\xymatrixcolsep{5pc}\xymatrixrowsep{1pc}\xymatrix{B\otimes_\R F\ar[r]^-{\pi\otimes s} & E\\
b\otimes f\ar@{|->}[r] & \pi(b)\cdot s(f)\\}\]
by conditions (i) and (ii) in the statement of Theorem \ref{thm:fibun}.

We fix a section $s$ as follows.  First choose and fix \emph{any} vector space section 
$$\tilde{s}\colon F\rightarrow E$$
and define the homogeneous subspace
$$\tilde{P}\coloneqq\tilde{s}(P)\subset E$$
where $P\subset F$ is the primitive subspace of the Lefschetz algebra $(F,\tau)$.

Define the homogeneous subspace 
$$\tilde{F}\coloneqq\bigoplus_{i=0}^{\lfloor\frac{f}{2}\rfloor} \left(\bigoplus_{j=0}^{f-2i}x^j\cdot\tilde{P}^i\right)\subset E.$$
Define a new vector space section with image $\tilde{F}\subset E$ by
\[\xymatrixcolsep{5pc}\xymatrixrowsep{1pc}\xymatrix{F\ar[r]^-s & E\\
\tau^j\cdot p\ar@{|->}[r] & x^j\cdot\tilde{s}(p)\\}\]
where $p\in P^i$ and $\tau^j$ (resp. $x^j$) denotes the element $\tau$ (resp. $x$) raised to the $j^{th}$ power for $0\leq j\leq f-2i$.  Thus we have fixed a $B$-module isomorphism
\[\xymatrixcolsep{5pc}\xymatrixrowsep{1pc}\xymatrix{B\otimes_\R F\ar[r]^-{\pi\otimes s} & E\\
b\otimes f\ar@{|->}[r] & \pi(b)\cdot s(f).\\}\]
The tensor product comes with a ``preferred" $B[X]$-module structure coming from the natural ring structure on $B\otimes_\R F$; that is 
$$(b'X^i)\cdot (b\otimes f)\coloneqq (b'\otimes\tau^i)\cdot(b\otimes f)=(b'\cdot b)\otimes (\tau^i\cdot f).$$
Recall that Lemma \ref{lem:prod} implies that the pair
$$(B\otimes_\R F,\lambda\otimes 1+1\otimes\tau)$$ 
is a Lefschetz algebra, hence if $\pi\otimes s$ were a $B[X]$-module homomorphism we would be done.
Note that the $B$-module homomorphism $\pi\otimes s$ is \emph{almost} a $B[X]$-module homomorphism, in the the following sense:  For $0\leq i\leq\lfloor\frac{f}{2}\rfloor$, $p\in P^i$, and $b\in B$ we have 
$$X\cdot\pi\otimes s\left(b\otimes \tau^j\cdot p\right)
=\pi\otimes s\left(X\cdot\left(b\otimes \tau^j\cdot p\right)\right)$$
for $j<f-2i$, and for $j=f-2i$ we have 
\begin{align}
\label{eq:chern}
X\cdot\pi\otimes s\left(b\otimes \tau^{f-2i}\cdot p\right) & =\pi(b)\cdot\left(x^{f-2i+1}\cdot s(p)\right)\nonumber\\
& =\pi(b)\cdot\left(\sum_{\ell,m}c_{\ell m}x^\ell\cdot s(p'_{m})\right)
\end{align}

for some $c_{\ell m}\in \pi(B^+)$.

The idea is to define a one-parameter family $A_t\in\End_B^1(E)$ that will ``continuously deform" the $B[X]$-module structure on $E$ from its given structure (where multiplication by $X$ is multiplication by $x\in E^1$) into one for which $\pi\otimes s$ \emph{is} a $B[X]$-module isomorphism.  Essentially this amounts to deforming the ``Chern classes", $\left\{c_{\ell m}\right\}$, in \eqref{eq:chern} to zero.  

For the remainder of this section, unless otherwise indicated, we let $p$ denote an arbitrary element in $P^i$ for appropriate $0\leq i\leq\lfloor\frac{f}{2}\rfloor$ and let $v$ denote the corresponding element $s(p)$ in $E^i$.
We will use the letters $b$ and $e$ to denote arbitrary homogeneous elements of $B$ and $E$ respectively, and will let $\tau^j$ (resp.\ $x^j$) denote the element $\tau$ (resp.\ $x$) raised to the $j^{th}$ power in $F$ (resp.\ $E$) for $0\leq j\leq f-2i$.    

For each $t\in\R$ define the ring homomorphism
\[\xymatrixcolsep{5pc}\xymatrixrowsep{1pc}\xymatrix{B\ar[r]^-{\hat{\phi}_t} & B\\
b\ar@{|->}[r] & t^{\deg(b)}\cdot b.\\}\]
Note that $\hat{\phi}_t$ is a ring \emph{isomorphism} for $t\neq 0$ and  $$(\hat{\phi}_t)^{-1}=\hat{\phi}_{\frac{1}{t}}.$$
For each $t\in\R$, $\hat{\phi}_t$ extends to a $B$-module homomorphism
\[\xymatrixcolsep{5pc}\xymatrixrowsep{1pc}\xymatrix{E\ar[r]^-{\phi_t} & E\\}\]
that is ``twisted" in the sense that $\phi_t(\pi(b)\cdot e)=\pi(\hat{\phi}_t(b))\cdot\phi_t(e)$ for all $b\in B$ and $e\in E$.

Define the vector space maps
$$\hat{A}_{t,i}\colon\bigoplus_{j=0}^{f-2i}x^j\cdot\tilde{P}^i\rightarrow E[1]$$
by the formula
$$\hat{A}_{t,i}(x^j\cdot v)=
\phi_t(x^{j+1}\cdot v).$$
Note that $\hat{A}_{t,i}(x^j\cdot v)=x^{j+1}\cdot v$ for $j<f-2i$.
For each $t\in\R$ this defines a vector space map
$$\hat{A}_t\coloneqq\bigoplus_{i=0}^{\lfloor\frac{f}{2}\rfloor}\hat{A}_{t,i}\colon \tilde{F}\rightarrow E[1].$$
Since an $\R$-basis for $\tilde{F}$ is a $B$-module basis for $E$, these maps $\hat{A}_t$ extend $B$-linearly to $E$ to define a one-parameter family 
$$A_t\colon E\rightarrow E[1]$$
of $B$-module endomorphisms as desired.  

Next define for each $t\in\R$, $\R$-vector space maps
\[\xymatrixcolsep{5pc}\xymatrixrowsep{1pc}\xymatrix{\tilde{F}\ar[r]^-{\hat{\chiup}_t} & \tilde{F}\\
x^j\cdot v\ar@{|->}[r] & t^{j+i}x^j\cdot v.\\}\]
These maps extend uniquely to $B$-module endomorphisms \[\xymatrixcolsep{5pc}\xymatrixrowsep{1pc}\xymatrix{E\ar[r]^-{\chiup^{}_t} & E\\
\pi(b)\cdot(x^j\cdot v)\ar@{|->}[r] & \pi(b)\cdot(t^{j+i}x^j\cdot v).\\}\]
The morphisms $\chiup^{}_t$ and $\phi_t$ are related by the following composition law
\begin{equation}
\label{eq:composition}
\chiup^{}_t\circ\phi_t(e)=t^{\deg(e)}e \ \ \forall t\in\R, e\in E.
\end{equation}
We have the following important observation.

\begin{lemma}
\label{lem:chits}
The following diagram commutes:
\begin{equation}
\label{eq:chits}
\xymatrixcolsep{5pc}\xymatrixrowsep{4pc}\xymatrix{E\ar[d]_-{tx} & E\ar[l]_-{\chiup_t}\ar[d]^-{A_t}\\
E[1] & E[1]\ar[l]^-{\chiup_t}\\}
\end{equation}
where the left vertical map is multiplication by the element $tx\in E^1$.
\end{lemma}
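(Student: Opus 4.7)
The plan is to verify commutativity of \eqref{eq:chits} on a convenient $\R$-basis for $E$ and reduce the check to a single application of the composition law \eqref{eq:composition}. The first observation I would make is that both composites $\chiup_t\circ A_t$ and $(\text{mult. by }tx)\circ\chiup_t$ are $B$-module homomorphisms $E\to E[1]$: the maps $\chiup_t$ and $A_t$ are $B$-linear by construction, and left multiplication by $tx$ is $B$-linear because $E$ is commutative. Hence it suffices to check the asserted equality on a $B$-module generating set, and the monomial $\R$-basis for $\tilde{F}$---which is a $B$-module basis for $E$ via the isomorphism $\pi\otimes s$---is the natural choice.

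Next I would fix such a basis element $\xi=x^j\cdot v$, where $v=s(p)$ for some $p\in P^i$ and $0\leq j\leq f-2i$, and compute both routes around the square. The ``$A_t$ then $\chiup_t$'' route sends $\xi\mapsto A_t(\xi)=\phi_t(x^{j+1}\cdot v)\mapsto\chiup_t\bigl(\phi_t(x^{j+1}\cdot v)\bigr)$, which by \eqref{eq:composition} equals $t^{\deg(x^{j+1}\cdot v)}(x^{j+1}\cdot v)=t^{i+j+1}(x^{j+1}\cdot v)$. The ``$\chiup_t$ then $tx$'' route sends $\xi\mapsto\chiup_t(\xi)=t^{i+j}(x^j\cdot v)\mapsto tx\cdot t^{i+j}(x^j\cdot v)=t^{i+j+1}(x^{j+1}\cdot v)$. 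The two outputs coincide, so the diagram commutes on every basis element and therefore on all of $E$.

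I do not anticipate a substantive obstacle. The whole point of inserting $\phi_t$ into the definition of $\hat{A}_{t,i}$ is to arrange that, after conjugation by the diagonal scaling $\chiup_t$, the $B$-endomorphism $A_t$ collapses to honest multiplication by $tx$; the composition identity \eqref{eq:composition} encodes this observation, and its invocation is the only non-routine step in the argument. The only piece of bookkeeping worth flagging is making sure that the computation for $j=f-2i$ (where $x^{j+1}\cdot v$ lies outside $\tilde{F}$) proceeds identically to the case $j<f-2i$; but since both calculations above go through $\phi_t(x^{j+1}\cdot v)$ and then apply \eqref{eq:composition}, no case distinction is actually required.
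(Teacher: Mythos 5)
Your proof is correct and follows essentially the same route as the paper's: both reduce, by $B$-linearity of all maps involved, to checking the identity $\chiup_t\circ A_t=(tx)\cdot\chiup_t$ on the $B$-module basis $\{x^j\cdot v\}$ of $E$ coming from $\tilde{F}$, with the composition law \eqref{eq:composition} doing all the work. If anything, your version is slightly cleaner: computing both composites directly on basis elements handles $t=0$ and $t\neq 0$ uniformly, whereas the paper treats $t=0$ separately and, for $t\neq 0$, conjugates by $\chiup_{\frac{1}{t}}$ to obtain the change-of-base formula \eqref{eq:cob}.
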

\begin{proof} 
We need to show that $\chiup^{}_t\circ A_t=(tx)\cdot\chiup^{}_t$ and it suffices to check this on (homogeneous) elements of $E$ of the form $\pi(b)\cdot (x^j\cdot v)\coloneqq b\cdot(x^j\cdot v)$ (we omit the $\pi$ in this proof for notational convenience).  For $t=0$ we compute 
$$\chiup^{}_0\circ A_0(b\cdot(x^j\cdot v))=
b\cdot\chiup^{}_0\circ\phi_0(x^{j+1}\cdot v)$$ 
which is clearly zero in light of \eqref{eq:composition}.  For $t\neq 0$ the $B$-module homomorphisms $\chiup^{}_t$ are actually $B$-module \emph{isomorphisms} with 
$$(\chiup^{}_t)^{-1}=\chiup^{}_{\frac{1}{t}}.$$
Hence we compute 
\begin{align*}\chiup^{}_t\circ A_t\circ\chiup^{}_{\frac{1}{t}}(b\cdot (x^j\cdot v))= & \chiup^{}_t\circ A_t(\frac{1}{t^{j+i}}b\cdot (x^j\cdot v))\\
= & 
\chiup^{}_t\left(t^{-j-i}b\cdot\phi_t(x^{j+1}\cdot v)\right)\\
= & 
t^{-j-i}b\cdot\chiup^{}_t\circ\phi_t(x^{j+1}\cdot v)\\
= & 
tb\cdot(x^{j+1}\cdot v)\\
= & tx\cdot(b\cdot(x^j\cdot v));\\
\end{align*}
the second to last equality follows from \eqref{eq:composition}.  Hence 
\begin{equation}
\label{eq:cob}
\chiup_t\circ A_t\circ\chiup_{\frac{1}{t}}=tx
\end{equation}
and the diagram commutes for every $t\in\R$.  
\end{proof}
Equation \eqref{eq:cob} can be interpreted as a $B$-module change-of-base formula for the linear transformation $A_t$, $t\neq 0$.

The following lemma relates the $B[X]$-module structure on $E$ given by $A_t$ at $t=0$ with the preferred $B[X]$-module structure on the tensor product $B\otimes_\R F$.
\begin{lemma}
\label{lem:chio}
The following diagram commutes:
\begin{equation}
\label{eq:chios}
\xymatrixcolsep{5pc}\xymatrixrowsep{4pc}\xymatrix{(B\otimes_\R F)\ar[r]^-{\pi\otimes s}\ar[d]_-{1\otimes\tau} & E\ar[d]^-{A_0}\\
(B\otimes F)[1]\ar[r]_-{\pi\otimes s} & E[1].\\}
\end{equation}
\end{lemma}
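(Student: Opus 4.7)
The plan is to check commutativity of the square directly by evaluating both composites on a convenient generating set. All four maps in the diagram are either $B$-module homomorphisms (namely $\pi\otimes s$ by construction and $A_0$ by definition of $A_t$ as a $B$-linear extension of $\hat{A}_t$) or $B$-linear maps in an obvious way (the map $1\otimes\tau$ on $B\otimes_\R F$). Therefore it suffices to check the equality $A_0\circ(\pi\otimes s)=(\pi\otimes s)\circ(1\otimes\tau)$ on elements of the form $1\otimes(\tau^j\cdot p)$, where $p\in P^i$ is primitive for $(F,\tau)$ and $0\le j\le f-2i$, since by the Primitive Decomposition Theorem these span $F$ as an $\R$-vector space, and hence span $B\otimes_\R F$ as a $B$-module.

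Write $v=\tilde{s}(p)\in E^i$ throughout. In the case $j<f-2i$, we have $(1\otimes\tau)(1\otimes\tau^j\cdot p)=1\otimes\tau^{j+1}\cdot p$, which maps under $\pi\otimes s$ to $s(\tau^{j+1}\cdot p)=x^{j+1}\cdot v$. Going the other way, $(\pi\otimes s)(1\otimes\tau^j\cdot p)=x^j\cdot v$, and by the definition of $\hat{A}_{0,i}$ we get $A_0(x^j\cdot v)=\phi_0(x^{j+1}\cdot v)$. Since $j+1\le f-2i$ the element $x^{j+1}\cdot v$ lies in $\tilde{F}$, and on $\tilde{F}$ the map $\phi_0$ is the identity (as is forced by the composition law $\chiup_t\circ\phi_t=t^{\deg}$ together with the fact that $\chiup_t$ acts by $t^{\deg}$ on $\tilde{F}$). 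Hence both composites send $1\otimes\tau^j\cdot p$ to $x^{j+1}\cdot v$.

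The case $j=f-2i$ is where the content lies. Going down then right gives $0$, since $p\in P^i$ implies $\tau^{f-2i+1}\cdot p=0$ in $F$. Going right then down gives $A_0(x^{f-2i}\cdot v)=\phi_0(x^{f-2i+1}\cdot v)$, and the element $x^{f-2i+1}\cdot v$ is no longer in $\tilde{F}$. However, writing it in the $B$-module basis $\tilde{F}$ produces exactly the expansion \eqref{eq:chern},
$$x^{f-2i+1}\cdot v=\sum_{\ell,m}c_{\ell m}\cdot(x^\ell\cdot s(p'_m)),$$
and the crucial observation is that every coefficient $c_{\ell m}$ lies in $\pi(B^+)$: applying $\iota$ to both sides gives $\tau^{f-2i+1}\cdot p=0$ on the left, while on the right $\iota$ kills exactly the $\pi(B^+)$ coefficients, so any hypothetical constant ($B^0=\R$) contribution would survive and contradict primitivity. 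Because $\phi_0$ is twisted $B$-linear with $\hat{\phi}_0(b)=0^{\deg b}\,b=0$ on $B^+$, every summand dies, yielding $\phi_0(x^{f-2i+1}\cdot v)=0$ as required.

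I expect the only real obstacle to be the bookkeeping that identifies the coefficients $c_{\ell m}$ as lying in the augmentation ideal $\pi(B^+)$; this is precisely the formalization of the informal picture, described before the lemma, of ``deforming the Chern classes to zero.'' Everything else is a definitional unfolding of $A_0$, $\phi_0$, $s$, and $\tilde{F}$.
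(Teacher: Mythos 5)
Your proof is correct and is essentially the paper's own argument: you verify the square on simple tensors, dispose of the case $j<f-2i$ by the definition of $A_0$, and for $j=f-2i$ show $\phi_0(x^{f-2i+1}\cdot s(p))=0$ because $x^{f-2i+1}\cdot s(p)\in\ker\{\iota\}=B^+\cdot E$ and $\phi_0$ annihilates $B^+\cdot E$ --- your basis expansion with coefficients $c_{\ell m}\in\pi(B^+)$ is exactly the explicit form \eqref{eq:chern} of this membership. The only blemish is your parenthetical justification that $\phi_0$ fixes $\tilde{F}$: the composition law \eqref{eq:composition} forces this only for $t\neq 0$ (where $\chiup_t$ is invertible), while at $t=0$ it holds directly from the definition of $\phi_t$ as the twisted $B$-linear extension relative to the basis $\tilde{F}$, which is the same fact the paper invokes when it notes $\hat{A}_{t,i}(x^j\cdot v)=x^{j+1}\cdot v$ for $j<f-2i$.
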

\begin{proof}
As before, it suffices to check this for simple tensors of the form
$b\otimes\tau^j\cdot p$. 
We compute
\begin{equation}
\label{eq:comat0}
A_0\circ(\pi\otimes s)(b\otimes\tau^j\cdot p)=\begin{cases}
\pi(b)\cdot x^{j+1}\cdot s(p) & \text{if $j<f-2i$}\\
\pi(b)\cdot\phi_0(x^{f-2i+1}\cdot s(p)) & \text{if $j=f-2i$}.\\
\end{cases}
\end{equation}
Recall that $\iota(x^{f-2i+1}\cdot s(p))=\tau^{f-2i+1}\cdot p=0$, hence by assumption (ii), $x^{f-2i+1}s(p)\in B^+\cdot E$.  On the other hand we have that $\phi_0(B^+\cdot E)=0$.  Applying this observation to \eqref{eq:comat0} we get
$$A_0\circ(\pi\otimes s)(b\otimes(\tau^j\cdot p))=\begin{cases}
\pi(b)\cdot x^{j+1}\cdot s(p) & \text{if $j<f-2i$}\\
0 & \text{if $j=f-2i$}.\\
\end{cases}$$
On the other hand we compute
$$(\pi\otimes s)\circ(1\otimes\tau)(b\otimes(\tau^j\cdot p))
=\begin{cases}
\pi(b)\cdot x^{j+1}s(p) & \text{if $j<f-2i$}\\
0 & \text{if $j=f-2i$}.\\
\end{cases}$$
Hence the diagram commutes and this completes the proof of Lemma \ref{lem:chio}.
\end{proof}

We are now in a position to prove Theorem \ref{thm:fibun}.

\begin{proof}[Proof of Theorem \ref{thm:fibun}]
Consider the $\R$-vector space map
$$\Lambda+A_t\in\End_\R^1(E)$$
where 
$$\Lambda\colon E\rightarrow E[1]$$ is the map ``multiplication by $\pi(\lambda)$".  Note that 
\begin{equation}
\label{eq:chiupts}
\chiup^{}_t\circ\Lambda=\Lambda\circ\chiup^{}_t
\end{equation} 
for all $t\in\R$.  Fix $t\in\R$ and for each $0\leq k\leq\lfloor\frac{e}{2}\rfloor$, consider the map
\begin{equation}
\label{eq:lamA}
(\Lambda+A_t)^{e-2k}\colon E^k\rightarrow E^{e-k}.
\end{equation}
By the commutivity of \eqref{eq:chits} together with \eqref{eq:chiupts}, the following diagram also commutes:
\[\xymatrixcolsep{5pc}\xymatrixrowsep{4pc}\xymatrix{E\ar[d]_-{\pi(\lambda)+tx} & E\ar[l]_-{\chiup^{}_t}\ar[d]^-{\Lambda+A_t}\\
E[1] & E[1]\ar[l]^-{\chiup^{}_t}.\\}\]
Moreover for $t\neq 0$ the map $\chiup_t$ is an isomorphism.  Hence in order to show that the pair $(E,\pi(\lambda)+tx)$ is a Lefschetz algebra it suffices to show that for some $t\neq 0$ the map \eqref{eq:lamA} is an isomorphism for each $0\leq k\leq\lfloor\frac{e}{2}\rfloor$.

By the commutativity of \eqref{eq:chios}, the following diagram also commutes:
\[\xymatrixcolsep{5pc}\xymatrixrowsep{4pc}\xymatrix{(B\otimes_\R F)\ar[r]^-{\pi\otimes s}\ar[d]_-{\lambda\otimes 1+1\otimes\tau} & E\ar[d]^-{\Lambda+A_0}\\
(B\otimes F)[1]\ar[r]_-{\pi\otimes s} & E[1].\\}\]
Since  
$$((B\otimes_\R F),\lambda\otimes 1+1\otimes\tau)$$ 
is a Lefschetz algebra (by Lemma \ref{lem:prod}), we deduce that for each $0\leq k\leq\lfloor\frac{e}{2}\rfloor$ the map \eqref{eq:lamA} is an isomorphism for $t=0$.

Consider for each $0\leq k\leq\lfloor\frac{e}{2}\rfloor$, the function of the real variable $t$, $$D_k(t)\coloneqq\det\left(\left(\Lambda+A_t\right)^{e-2k}\right).$$
It is straightforward to see that $D_k(t)$ is a polynomial function of $t$.  Furthermore, since $D_k(0)\neq 0$, the polynomial $D_k(t)$ is not identically zero for all $0\leq k\leq\lfloor\frac{e}{2}\rfloor$.  Hence there must be some value $0\neq t_0\in\R$ such that $D_k(t_0)\neq 0$ for all $0\leq k\leq\lfloor\frac{e}{2}\rfloor$.  Therefore $(E,\pi(\lambda)+t^{}_0x)$ is a Lefschetz algebra, and this completes the proof of Theorem \ref{thm:fibun}.
\end{proof} 

\section{Finite Reflection Groups and Coinvariant Rings}
In this section we give some basic results on finite reflection groups and their coinvariant rings, following  \cite{BGG}, \cite{BB}, \cite{Hil}, \cite{Hi} and \cite{Hum}.

\subsection{Finite reflection groups}
Fix an inner product $\langle,\rangle$ on $\R^n$ and a (reduced) root system $\Phi\subset\R^n$.  For each $\gamma\in\Phi$ let $\check{\gamma}\in(\R^n)^*$ denote the corresponding co-root, defined in terms of the inner product $\langle,\rangle$ by 
\begin{equation}
\label{eq:coroot}
\check{\gamma}(x)=2\frac{\langle x,\gamma\rangle}{\langle\gamma,\gamma\rangle}.
\end{equation}  
The reflection corresponding to $\gamma\in\Phi$ is the orthogonal transformation $s_\gamma\colon\R^n\rightarrow\R^n$ defined by $s_\gamma(x)=x-\check{\gamma}(x)\cdot\gamma$.  Let $W$ be the finite reflection group generated by the reflections $\left\{s_\gamma\mid \gamma\in\Phi\right\}$.  Fix a simple system $\Delta\subset\Phi$ and let $\Phi^+\subset\Phi$ be the corresponding positive system.  We let $\ell(w)$ denote the length of an element $w\in W$, and let $w_0\in W$ denote the longest element in $W$, with $\ell(w_0)=d$.  

A useful fact is that simple reflections permute a large subset of the positve roots:
\begin{equation}
\label{thm:simprefpos}
s_\alpha(\Phi^+\setminus\{\alpha\})=\Phi^+\setminus\{\alpha\}
\end{equation} 
for all $\alpha\in\Delta$; see \cite{Hum}, Proposition 1.4 for a proof of this fact.

For each $\alpha\in\Phi^+$ and each $w\in W$, either $\ell(s_\alpha\cdot w)>\ell(w)$ or $\ell(s_\alpha\cdot w)<\ell(w)$.  As in \cite{Hum} Proposition 5.7, we have:
\begin{equation}
\label{thm:posinclen}
\ell(s_\alpha\cdot w)>\ell(w)\Leftrightarrow w^{-1}(\alpha)\in\Phi^+.
\end{equation}

For elements $w',w\in W$ we will write $\xymatrixcolsep{1pc}\xymatrix{w'\ar[r]^\alpha & w}$ to mean that $\alpha\in\Phi^+$, $s_\alpha\cdot w'=w$ and $\ell(s_\alpha\cdot w')=\ell(w')+1$.  There is a natural partial order on the set $W$ called the \emph{Bruhat ordering}, defined as follows:  Set $w\leq w'$ if and only if there exist group elements $w_1,\ldots,w_N$ and positive roots $\beta_0,\ldots,\beta_N$ such that 
$$\xyto{w\ar[r]^-{\beta_0} & w_1\ar[r]^-{\beta_1} & \cdots w_N\ar[r]^-{\beta_N} & w'}.$$
See \cite{Deo} or \cite{BB} Chapter 2 for more information and references on the Bruhat ordering of a finite reflection group.

\subsection{Coinvariant rings}
Let $S=\Sym(\R^n)$ be the polynomial ring on $(\R^n)^*$.  Let $S^W\subseteq S$ denote the (graded) sub-ring of invariant polynomials under $W$ and let $(S^W)^+\subseteq S^W$ denote the ideal generated by the invariants of positive degree.  Let $I\coloneqq(S^W)^+\cdot S\subseteq S$ denote the corresponding ideal in $S$.  Let $S_W\coloneqq S/I$ be the coinvariant ring.  In \cite{BGG}, Bernstein, Gelfand, and Gelfand introduced a set of operators on $S$ that are useful in studying the $S^W$-module structure on $S$.

\begin{definition}
For $\gamma\in\Phi^+$, define the operator
$A_\gamma\colon S\rightarrow S[-1]$ by the formula
$$A_\gamma(f)=\frac{f-s_\gamma(f)}{\gamma}$$
\end{definition}
Since for $x\in S^1=\R^n$ we have $s_\gamma(x)=x-\check{\gamma}(x)\gamma$, the quotient $\frac{x-s_\gamma(x)}{\gamma}$ is the real number $\check{\gamma}(x)$.  Since $S$ is generated in degree one, it follows that $A_\gamma$ is well-defined.  The operator $A_\gamma$ enjoys a Leibniz-type rule which the reader can readily verify:
\begin{equation}
\label{prop:multinvar2}
A_\gamma(f\cdot g)=f\cdot A_\gamma(g)+s_\gamma(g)\cdot A_\gamma(f)
\end{equation}
for all $f,g\in S$.  In particular, $A_\gamma(f)=0$ for all $f\in S^W$, hence $A_\gamma(I)\subseteq I$ for all $\gamma\in\Phi^+$.

Label the simple roots $\Delta=\{\gamma_1,\ldots,\gamma_k\}$ and let $s_k\in W$ denote the reflection corresponding to the simple root $\gamma_k$.  
Given an expression $w=s_{i_1}\cdots s_{i_r}$ (not necessarily reduced) define the operator
\begin{equation}
\label{eq:fundbgg}
A_{(i_1,\ldots,i_r)}\coloneqq A_{\gamma_{i_1}}\circ\cdots\circ A_{\gamma_{i_r}}\colon S\rightarrow S[-r].
\end{equation}
The following result of Bernstein, Gelfand and Gelfand \cite{BGG} is fundamental to Schubert calculus.
\begin{proposition}
\label{thm:fundbgg}
\begin{enumerate}
\item if $\ell(w)<r$ (i.e. the expression $w=s_{i_1}\cdots s_{i_r}$ is not reduced) then the operator $A_{(i_1,\ldots,i_r)}$ is zero.
\item if $\ell(w)=r$ (i.e. the expression $w=s_{i_1}\cdots s_{i_r}$ is reduced) then the operator $A_{(i_1,\ldots,i_r)}$ depends only on the element $w$; it is independent of the reduced expression for $w$.
\end{enumerate}
\end{proposition}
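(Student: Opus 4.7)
The plan is to reduce both statements to two algebraic properties of the simple BGG operators $A_i\coloneqq A_{\gamma_i}$: the \emph{nilpotency} $A_i^2=0$, and the \emph{braid relations}
\[
\underbrace{A_iA_jA_i\cdots}_{m_{ij}\text{ factors}}=\underbrace{A_jA_iA_j\cdots}_{m_{ij}\text{ factors}},
\]
where $m_{ij}$ denotes the order of $s_is_j$ in $W$. Granting these, the proposition follows from the word problem for Coxeter groups.

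Nilpotency is immediate from $s_\gamma(\gamma)=-\gamma$: a short direct computation shows $s_\gamma\circ A_\gamma=A_\gamma$, and therefore $A_\gamma^2(f)=\gamma^{-1}\bigl(A_\gamma(f)-s_\gamma A_\gamma(f)\bigr)=0$. For the braid relations, I would first observe that $A_i$ and $A_j$ commute with multiplication by any element of $\Sym\bigl((\R\gamma_i+\R\gamma_j)^\perp\bigr)$, since $s_i$ and $s_j$ act trivially on that subspace. This reduces the problem to the rank-two polynomial ring $\R[\gamma_i,\gamma_j]$ on which the dihedral subgroup $W_{ij}=\langle s_i,s_j\rangle$ acts. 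The principal obstacle is this rank-two verification: for each of the two iterated compositions one must show, by induction on the number of factors or via a telescoping argument, that the resulting operator coincides with the divided-difference operator
\[
f\;\longmapsto\;\frac{\sum_{w\in W_{ij}}(-1)^{\ell(w)}\,w(f)}{\prod_{\alpha\in\Phi_{ij}^+}\alpha}
\]
attached to the longest element of $W_{ij}$. The computation becomes cumbersome for large $m_{ij}$ (for instance in types $G_2$ and $I_2(m)$), but is finite and tractable.

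Once nilpotency and the braid relations are secured, I would conclude as follows. For part (ii), Matsumoto's theorem says that any two reduced expressions for $w\in W$ are connected by a sequence of braid moves; since each such move preserves the composition $A_{(i_1,\ldots,i_r)}$ by the braid relations, the operator depends only on $w$. For part (i), by the solution to the word problem in Coxeter groups (Tits' theorem), any non-reduced expression can be transformed via braid moves into an expression containing two identical consecutive letters $s_is_i$; the braid moves leave the composition unchanged, and the nilpotency $A_i^2=0$ then forces the entire composition to vanish.
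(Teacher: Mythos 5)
The paper offers no proof of this proposition at all---its ``proof'' is a citation to \cite{BGG}, Theorem 3.4 and \cite{Hi}, Proposition 2.6---so your proposal should be measured against the standard arguments in those sources, and it is in fact exactly that standard route: nilpotency plus braid relations for the simple operators, then the word property of Coxeter groups to handle arbitrary expressions. The steps you do carry out are correct. Since $s_\gamma(\gamma)=-\gamma$, one gets $s_\gamma\circ A_\gamma=A_\gamma$ and hence $A_\gamma^2=0$. The reduction to rank two is sound: by the Leibniz rule \eqref{prop:multinvar2}, $A_i$ and $A_j$ are linear over $\Sym\bigl((\R\gamma_i+\R\gamma_j)^\perp\bigr)$, and $s_i,s_j$ preserve the plane $\R\gamma_i+\R\gamma_j$, so $S$ splits as $\R[\gamma_i,\gamma_j]\otimes_\R\Sym\bigl((\R\gamma_i+\R\gamma_j)^\perp\bigr)$ with both operators acting through the first factor. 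The endgame is also right: Matsumoto's theorem gives (ii) from the braid relations, and for (i) the statement you invoke from Tits' solution of the word problem is true---e.g.\ take $k$ minimal with $s_{i_1}\cdots s_{i_k}$ non-reduced, use the exchange condition to rewrite $s_{i_1}\cdots s_{i_{k-1}}$ as a reduced word ending in $s_{i_k}$, connect the two reduced words by braid moves, and then $A_{i_k}^2=0$ annihilates the whole composition.

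The caveat is that the one step you do not execute, the dihedral braid relation, is essentially the entire mathematical content of the proposition; everything else above is routine. Two remarks about it. First, your target identity is correct, and identifying \emph{both} $m_{ij}$-fold alternating products with the Jacobi-type operator is the right uniform formulation: since the $W$ of this paper is an arbitrary finite reflection group, the rank-two parabolic subgroups may be of type $I_2(m)$ for \emph{any} $m$, so a case check over the crystallographic values $m\in\{2,3,4,6\}$ would not suffice, and your formula handles all $m$ at once. Second, to complete the argument you must (a) check that the displayed operator is well defined on polynomials---the numerator is anti-invariant under $W_{ij}$, hence divisible by each positive root, and the positive roots are pairwise non-proportional linear forms, so it is divisible by their product---and (b) actually run the induction, for instance by writing the $k$-fold alternating product as $\sum_v c_v\, v$ with rational-function coefficients and verifying that at $k=m_{ij}$ one gets $c_v=(-1)^{\ell(v)}\big/\prod_{\alpha\in\Phi_{ij}^+}\alpha$ independently of the starting letter. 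As written, your proof is an accurate and well-organized reduction of the proposition to this classical rank-two computation, which is precisely what the results the paper cites (in particular \cite{Hi}, Proposition 2.6) supply; in a self-contained write-up, that computation would still need to be carried out or cited explicitly.
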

\begin{proof}
See \cite{BGG}, Theorem 3.4 or \cite{Hi}, Proposition 2.6.
\end{proof}
Define the \emph{BGG-operator} for $w$, $A_w\colon S\rightarrow S[-r]$, to be the operator in \eqref{eq:fundbgg} with respect to any reduced expression; this is well defined by Proposition \ref{thm:fundbgg}.  

An immediate consequence of Proposition \ref{thm:fundbgg} is the following composition rule for BGG-operators:
\begin{equation}
\label{prop:multinvar1}
A_u\circ A_v=\begin{cases}
A_{u\cdot v} & \text{if $\ell(u\cdot v)=\ell(u)+\ell(v)$}\\
0 & \text{otherwise}\\
\end{cases}
\end{equation}
for all $u,v \in W$.

An element $\chi\in S^1$ defines an operator of degree $1$, $\chi\colon S\rightarrow S[1]$, given by multiplication by $\chi$ in $S$.  An element $w\in W$ also defines an operator of degree $0$, $w\colon S\rightarrow S$, given by the linear action of $W$ on $S$. Bernstein, Gelfand and Gelfand \cite{BGG} derive the following useful formula for the commutator of the operators $w^{-1}\circ A_w$ and $\chi$ that is crucial to the main results of this paper. 
\begin{proposition}
\label{thm:corrbgg}
For each $\chi\in S^1$ and each $w\in W$
\begin{equation}
\label{eq:corrbgg}
[w^{-1}\circ A_w,\chi]=\sum_{\scriptsize\xymatrixcolsep{1pc}\xymatrix{w'\ar[r]^{\alpha}& w}}\check{\alpha}(w'(\chi))\cdot w^{-1}\circ A_{w'}
\end{equation}
where the sum is taken over all $w'\in W$ and $\alpha\in\Phi^+$ such that $\xymatrixcolsep{1pc}\xymatrix{w'\ar[r]^\alpha & w}$.
\end{proposition}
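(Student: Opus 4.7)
My plan is to reduce the claim to the equivalent identity
\[ A_w \circ \chi \;-\; w(\chi) \circ A_w \;=\; \sum_{\scriptsize\xyto{w'\ar[r]^{\alpha} & w}}\check\alpha(w'(\chi))\cdot A_{w'}, \]
where $\chi$ and $w(\chi)$ are regarded as multiplication operators on $S$. The proposition then follows by pre-composing both sides with $w^{-1}$, using that $w^{-1}$ is a ring automorphism and hence satisfies the twisting rule $w^{-1}\circ w(\chi) = \chi\circ w^{-1}$. This reformulation is convenient because the right-hand side involves only the raw BGG operators, which interact cleanly with the Leibniz rule \eqref{prop:multinvar2} and the composition law \eqref{prop:multinvar1}.

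I would establish the reformulated identity by induction on $\ell(w)$. The case $\ell(w)=0$ is vacuous. For $\ell(w)=1$, $w=s_\alpha$, I would rearrange \eqref{prop:multinvar2} using $\chi-s_\alpha(\chi)=\check\alpha(\chi)\alpha$ and $\alpha\cdot A_\alpha(f)=f-s_\alpha(f)$ to obtain $A_\alpha(\chi f) = s_\alpha(\chi)A_\alpha(f) + \check\alpha(\chi)f$, which matches the unique cover $e\to s_\alpha$. For $\ell(w)\geq 2$, I would choose a simple reflection $s_\alpha$ with $w=s_\alpha w''$ and $\ell(w)=\ell(w'')+1$, so that $A_w=A_\alpha\circ A_{w''}$ by Proposition \ref{thm:fundbgg}. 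Applying the inductive hypothesis to $A_{w''}\circ\chi$ and then the base case to $A_\alpha\circ w''(\chi)$ (noting $s_\alpha w''(\chi) = w(\chi)$) yields
\[ A_w\circ\chi \;=\; w(\chi)\circ A_w \;+\; \check\alpha(w''(\chi))\cdot A_{w''} \;+\; \sum_{\scriptsize\xyto{w'''\ar[r]^{\gamma} & w''}}\check\gamma(w'''(\chi))\cdot(A_\alpha\circ A_{w'''}). \]
By \eqref{prop:multinvar1}, the composition $A_\alpha\circ A_{w'''}$ equals $A_{s_\alpha w'''}$ precisely when $\ell(s_\alpha w''')=\ell(w''')+1$, and vanishes otherwise.

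The main obstacle is the combinatorial matching of the surviving terms with the covers $w'\to w$ on the right-hand side. These covers split as (A) $w'=w''$ with $\beta=\alpha$, accounting for the $\check\alpha(w''(\chi))A_{w''}$ term; and (B) covers with $w'\ne w''$, for which I would set $w''':=s_\alpha w'$ and $\beta:=s_\alpha(\gamma)$. The braid relation $s_\alpha s_\beta s_\alpha = s_{s_\alpha(\beta)}$ together with fact \eqref{thm:simprefpos} (applicable since $\beta\ne\alpha$) ensures $\gamma\in\Phi^+\setminus\{\alpha\}$, and the orthogonality of $s_\alpha$ gives $\check{s_\alpha(\gamma)}(s_\alpha v)=\check\gamma(v)$, so the coefficients agree. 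Verifying that this gives a bijection with the pairs $(w'''\to w'',\, \ell(s_\alpha w''')=\ell(w''')+1)$ is the delicate step: using \eqref{thm:posinclen}, the cover condition $\ell(w')=\ell(w)-1$ translates into $w^{-1}(\beta)\in-\Phi^+$, which (after substituting $w^{-1}=w''^{-1}s_\alpha$) is equivalent to both $\ell(s_\gamma w'')=\ell(w'')-1$ and $\ell(s_\alpha w''')=\ell(w''')+1$. This length-bookkeeping is the step I expect to require the most care.
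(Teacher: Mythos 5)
Your proof is correct, but note that the paper itself does not prove this proposition at all --- its ``proof'' is a citation to \cite{BGG}, Lemma 3.5 and \cite{Hi}, Theorem 4.1 --- so what you have done is reconstruct, essentially verbatim in structure, the classical argument from those sources: reduce to the untwisted identity $A_w\circ\chi-w(\chi)\circ A_w=\sum\check\beta(w'(\chi))A_{w'}$, induct on $\ell(w)$ using the Leibniz rule \eqref{prop:multinvar2} for the base case and the composition law \eqref{prop:multinvar1} for the splitting $A_w=A_\alpha\circ A_{w''}$. The step you flag as delicate does go through, and your sketch of it is sound: since $\alpha$ is simple, $\ell(w''')=\ell(s_\alpha w')\in\{\ell(w')-1,\ell(w')+1\}$, and the downward condition $w^{-1}(\beta)\in-\Phi^+$ rewrites via $w^{-1}(\beta)=w''^{-1}(s_\alpha(\beta))=w''^{-1}(\gamma)$ and \eqref{thm:posinclen} as $\ell(s_\gamma w'')<\ell(w'')$, which then forces the exact value $\ell(w''')=\ell(w'')-1$ and simultaneously $\ell(s_\alpha w''')=\ell(w''')+1$, so the dichotomy collapses to the two conditions you state. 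One small point worth making explicit in a final write-up: in the inductive sum the case $\gamma=\alpha$ can never occur, since it would force $w'''=s_\alpha w''=w$, of length $r$ rather than $r-2$; this is what licenses the use of \eqref{thm:simprefpos} to conclude $\beta=s_\alpha(\gamma)\in\Phi^+$ in every surviving term, and it also shows the standalone term $\check\alpha(w''(\chi))A_{w''}$ is never duplicated by the sum. With that observation added, your case (A)/(B) matching is a genuine bijection and the coefficients agree by the orthogonality computation $\check{s_\alpha(\gamma)}(s_\alpha v)=\check\gamma(v)$, exactly as you say.
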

\begin{proof}
See \cite{BGG}, Lemma 3.5 or \cite{Hi}, Theorem 4.1.
\end{proof}  

Equation \eqref{eq:corrbgg} is tailor-made for dealing with the Lefschetz problem as it allows us to compute ``integrals" of powers of degree $1$ elements in terms of weighted chains in the Bruhat order.
\begin{corollary}
\label{cor:corcorbgg}
Fix $w\in W$ of length $t$.  Let $\chi_1,\ldots,\chi_s\in S^1$ for some $s\leq t$ and let $\sigma\in S^{t-s}$.  Then 
\begin{equation}
\label{eq:sumoverpaths}
A_w(\chi_1\cdots\chi_s\cdot\sigma)=\sum_{\scriptsize{\xyto{u_1\ar[r]^-{\beta_1} & u_2 \cdots u_s\ar[r]^-{\beta_s} & w\\}}}\check{\beta}_1(u_1(\chi_1))\cdots\check{\beta}_s(u_s(\chi_s))\cdot A_{u_1}(\sigma).
\end{equation} 
\end{corollary}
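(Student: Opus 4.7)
The plan is to proceed by induction on $s$. The engine is Proposition \ref{thm:corrbgg}, which upon composing both sides with $w$ on the left rearranges into the equivalent identity
\begin{equation*}
A_w(\chi\cdot h) \;=\; w(\chi)\cdot A_w(h) \;+\; \sum_{w'\xrightarrow{\alpha}w}\check{\alpha}(w'(\chi))\cdot A_{w'}(h),
\end{equation*}
valid for every $\chi\in S^1$ and every $h\in S$ (we use here that $w\colon S\to S$ is an $\R$-algebra automorphism, so that $w(\chi\cdot w^{-1}(f))=w(\chi)\cdot f$). The second ingredient is the elementary observation that $A_u$ annihilates any homogeneous element of degree strictly less than $\ell(u)$, since $A_u$ lowers degree by $\ell(u)$ and $S$ is nonnegatively graded.

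For the base case $s=1$, apply the identity above with $\chi=\chi_1$ and $h=\sigma\in S^{t-1}$. The term $w(\chi_1)\cdot A_w(\sigma)$ vanishes because $\deg\sigma=t-1<t=\ell(w)$, and relabeling $w'$ as $u_1$ and $\alpha$ as $\beta_1$ recovers \eqref{eq:sumoverpaths}.

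For the inductive step, regroup the argument as $\chi_s\cdot g$, where $g\coloneqq\chi_1\cdots\chi_{s-1}\cdot\sigma\in S^{t-1}$, and apply the identity with $\chi=\chi_s$. Once more the $w(\chi_s)\cdot A_w(g)$ term vanishes by the degree observation, leaving
\begin{equation*}
A_w(\chi_1\cdots\chi_s\cdot\sigma) \;=\; \sum_{w'\xrightarrow{\alpha}w}\check{\alpha}(w'(\chi_s))\cdot A_{w'}(\chi_1\cdots\chi_{s-1}\cdot\sigma).
\end{equation*}
Each $w'$ that appears has length $t-1=\deg(g)$, so the induction hypothesis applies and expands $A_{w'}(g)$ as a sum over chains $u_1\xrightarrow{\beta_1}u_2\xrightarrow{\beta_2}\cdots\xrightarrow{\beta_{s-1}}w'$ with coefficient $\check{\beta}_1(u_1(\chi_1))\cdots\check{\beta}_{s-1}(u_{s-1}(\chi_{s-1}))\cdot A_{u_1}(\sigma)$. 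Grafting the outer edge $w'\to w$ onto each inner chain via $u_s\coloneqq w'$, $\beta_s\coloneqq\alpha$, and absorbing the outer factor $\check{\alpha}(w'(\chi_s))=\check{\beta}_s(u_s(\chi_s))$, reassembles the double sum into precisely the right-hand side of \eqref{eq:sumoverpaths}.

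The proof is essentially bookkeeping; the only nontrivial point is the repeated invocation of the degree-vanishing observation, which discards the $w(\chi)\cdot A_w(h)$ term at each stage and keeps the recursion confined to chains of length exactly $s$. The indexing convention of the right-hand side (which pairs $\chi_j$ with the source of the $j$-th edge) dictates that one must peel off $\chi_s$ — not $\chi_1$ — in the inductive step, but once this is observed the bookkeeping matches up automatically.
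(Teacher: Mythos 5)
Your proof is correct and takes essentially the same route as the paper's: induction on $s$, peeling off $\chi_s$, applying Proposition \ref{thm:corrbgg}, and killing the leading term via the degree argument (that $A_w$ annihilates degrees below $\ell(w)$). The only cosmetic difference is that you conjugate the commutator identity by $w$ at the outset, whereas the paper inserts and removes the factor $w^{-1}$ using the $W$-invariance of degree-zero elements; the underlying computation is identical.
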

\begin{proof}
The proof is by induction on $s\geq 0$, the base case being trivial.  Assume the result holds for any product of $s-1$ linear forms.  Then write 
\begin{align*} A_w(\chi_1\cdots\chi_s\cdot\sigma)= 
& A_w\circ\chi_s(\chi_1\cdots\chi_{s-1}\cdot\sigma)\\
= & (w^{-1}\circ A_w)\circ\chi_s(\chi_1\cdots\chi_{s-1}\cdot\sigma)\\
= & \chi_s\circ (w^{-1}\circ A_w)(\chi_1\cdots\chi_{s-1}\cdot\sigma)+ [w^{-1}\circ A_w,\chi_s](\chi_1\cdots\chi_{s-1}\cdot\sigma)\\
= & [w^{-1}\circ A_w,\chi_s](\chi_1\cdots\chi_{s-1}\cdot\sigma)\\
= & \sum_{\scriptsize\xymatrixcolsep{1pc}\xymatrix{u_s\ar[r]^{\beta_s}& w}}\check{\beta}_s(u_s(\chi_s))\cdot w^{-1}\circ A_{u_s}(\chi_1\cdots\chi_{s-1}\cdot\sigma)\\
= & \sum_{\scriptsize\xymatrixcolsep{1pc}\xymatrix{u_s\ar[r]^{\beta_s}& w}}\check{\beta}_s(u_s(\chi_s))\cdot A_{u_s}(\chi_1\cdots\chi_{s-1}\cdot\sigma)
\end{align*}
where the second to last equality follows from Proposition \ref{thm:corrbgg} and the last (as well as the second) equality follows from the fact that polynomials in degree $0$ are $W$-invariant.  Hence by induction, the assertion of Corollary \ref{cor:corcorbgg} holds.
\end{proof}

Viewing $S_W=\bigoplus_{i=0}^d\left(S_W\right)^i$ as a graded $\R$-vector space, define the graded vector space $T=\bigoplus_{i=0}^d T^i$ by $$T^i\coloneqq\Hom_\R\left(\left(S_W\right)^i,\R\right).$$
By \eqref{prop:multinvar2} the operator $A_w\colon S\rightarrow S[-\ell(w)]$ passes to an operator on the quotient 
\begin{equation}
\label{eq:Toper}
\bar{A}_w\colon\left(S_W\right)\rightarrow\left(S_W\right)[-\ell(w)].
\end{equation}  
In particular, if $\ell(w)=i$ the restriction of \eqref{eq:Toper} to $\left(S_W\right)^i$ is an element of $T^i$, which by abuse of notation we call by the same name.  A basic fact from Schubert calculus is that these elements $\bigsqcup_{i=0}^d\left\{\bar{A}_w\mid \ell(w)=i\right\}$ then form a homogeneous vector space basis for $T$.  The dual basis $\bigsqcup_{i=1}^d\left\{X_w\mid \ell(w)=i\right\}\subset S_W$ is called a \emph{Schubert basis} for the coinvariant ring $S_W$.  

\subsection{Parabolic subgroups and relative coinvariants}
Fix a subset $\Theta\subset\Delta$ of simple roots and let $\Phi_\Theta\subset\Phi$ denote the corresponding root system, with positive system $\Phi_\Theta^+$.  Let $W_\Theta\subset W$ denote the parabolic subgroup corresponding to $\Theta$.  $W_\Theta$ also acts on $\R^n$ and thus on $S$ by restricting the action of $W$.  Let $S^{W_\Theta}$ denote the invariant ring of $W_\Theta$.  Note that $S^W$ is naturally a sub-ring of $S^{W_\Theta}$.  Let $I_\Theta\subset S$ denote the ideal generated by the positive degree invariants of $W_\Theta$ and let $S_{W_\Theta}=S/I_\Theta$ denote the coinvariant ring of $W_\Theta$.  
Since $I\subseteq I_\Theta$, there is a natural surjection of rings
$$\iota\colon S_W\rightarrow S_{W_\Theta}$$ 
induced by the identity map on $S$.

The action of $W$ on $S$ induces an action on the quotient, $S_W$.  Let $S^{W_\Theta}_W$ denote the sub-ring (of $S_W$) of $W_\Theta$-invariants called the ring of \emph{relative coinvariants} (with respect to $W_\Theta\subset W$).  Let
$$\pi\colon S^{W_\Theta}_W\rightarrow S_W$$
denote the natural inclusion map.

\begin{lemma}
\label{lem:alternrelcoinv}
The diagram 
\[\xymatrixcolsep{5pc}\xymatrixrowsep{4pc}\xymatrix{S^{W_\Theta}\ar[r]^-{\text{incl}}\ar[d]_-{p|} & S\ar[d]^-{p}\ar[rd]^q & \\
S^{W_\Theta}_W\ar[r]_-{\text{incl}} & S_W\ar[r]_-{\iota} & S_{W_\Theta}\\}\]
commutes.  Moreover the left-most vertical map $p|$ is surjective and induces an isomorphism $S^{W_\Theta}_W\cong\frac{S^{W_\Theta}}{(S^W)^+S^{W_\Theta}}\cong S^{W_\Theta}\otimes_{S^W}\R$.
\end{lemma}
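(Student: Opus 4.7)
The plan is to handle the two assertions separately. The commutativity of the diagram is essentially definitional: the maps $p\colon S\to S_W$ and $p|\colon S^{W_\Theta}\to S^{W_\Theta}_W$ are both induced by reduction modulo $I$, while the inclusion $S^{W_\Theta}_W\hookrightarrow S_W$ and the surjection $\iota\colon S_W\to S_{W_\Theta}$ are the natural maps; the composite $\iota\circ p$ factors as reduction mod $I$ followed by reduction mod $I_\Theta/I$, which equals $q$ since $I\subseteq I_\Theta$. The only point to verify is that $p|$ lands in $S^{W_\Theta}_W$, which holds because the $W$-action on $S$ stabilizes $I$ (since $w(S^W)=S^W$), hence descends to $S_W$, and $W_\Theta$-invariants go to $W_\Theta$-invariants.

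For the stated isomorphism, the second identification $S^{W_\Theta}/(S^W)^+S^{W_\Theta}\cong S^{W_\Theta}\otimes_{S^W}\R$ is formal: $\R=S^W/(S^W)^+$ as an $S^W$-module, and base change gives $S^{W_\Theta}\otimes_{S^W}\R=S^{W_\Theta}/(S^W)^+S^{W_\Theta}$. So the substance is to show that $p|$ is surjective with kernel $(S^W)^+S^{W_\Theta}$.

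The key tool is the Reynolds averaging operator
\[
\rho_\Theta\colon S\longrightarrow S^{W_\Theta},\qquad \rho_\Theta(h)=\frac{1}{|W_\Theta|}\sum_{w\in W_\Theta}w(h),
\]
which is $S^{W_\Theta}$-linear and in particular $S^W$-linear. For surjectivity, start with $f\in S^{W_\Theta}_W$, pick any lift $\tilde f\in S$, and consider $\rho_\Theta(\tilde f)\in S^{W_\Theta}$. Since the $W$-action descends to $S_W$ and $f$ is $W_\Theta$-fixed there, each $w(\tilde f)$ agrees with $\tilde f$ modulo $I$, so $p|(\rho_\Theta(\tilde f))=f$. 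For the kernel, clearly $(S^W)^+S^{W_\Theta}\subseteq\ker(p|)$ because $(S^W)^+\subseteq I$. Conversely, if $f\in S^{W_\Theta}$ with $p(f)=0$, write $f=\sum_i g_i h_i$ with $g_i\in (S^W)^+$ and $h_i\in S$; applying $\rho_\Theta$ and using that each $g_i$ is $W_\Theta$-invariant yields
\[
f=\rho_\Theta(f)=\sum_i g_i\,\rho_\Theta(h_i)\in (S^W)^+\cdot S^{W_\Theta}.
\]

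The main obstacle, modest though it is, is the kernel containment $\ker(p|)\subseteq (S^W)^+S^{W_\Theta}$, which is the only step needing more than an unwinding of definitions. Everything else is either formal or a direct application of averaging over the finite group $W_\Theta$, whose characteristic-zero invariant theory makes $\rho_\Theta$ well defined and $S^W$-linear.
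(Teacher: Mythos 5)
Your proof is correct and follows essentially the same route as the paper: both establish commutativity from the $W$-equivariance of $p$ and the definition of $\iota$, and both prove surjectivity of $p|$ and the kernel identity $\ker(p|)=(S^W)^+S^{W_\Theta}$ by averaging over $W_\Theta$ (your Reynolds operator is exactly the paper's $F\mapsto F^\sharp$), with your write-up merely making explicit a few steps the paper leaves implicit, such as the base-change identification and why the average of a lift still maps to $f$. One cosmetic caution: your symbol for the averaging operator collides with the paper's later use of $\rho_\Theta$ for the half-sum of the roots in $\Phi^+_\Theta$, so it should be renamed if incorporated.
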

\begin{proof}
That the square in the diagram commutes follows from the observation that the quotient map $p\colon S\rightarrow S_W$ is $W-$equivariant.  The triangle commutes by the definition of $\iota$.  Thus the whole diagram must commute.

To see that $p|$ is surjective, take any $f\in\rel\subset S_W$ and let $F\in S$ be any lift.  Let $F^\sharp\in S^{W_\Theta}$ be the average of $F$ over $W_\Theta$.  Then $p|(F^\sharp)=f^\sharp=f$.
Note that $\ker(p|)=(S^{W})^+\cdot S\cap S^{W_\Theta}\supseteq(S^W)^+\cdot S^{W_\Theta}$.  The claim is that this containment is actually equality.  Indeed let $f\in\ker(p|)$; write $f=s_1g_1+\ldots+s_rg_r$ for some $s_j\in (S^W)^+$ and $g_j\in S$.  Averaging over $W_\Theta$ we get $f^\sharp=f=s_1g_1^\sharp+\ldots+s_rg_r^\sharp\in(S^W)^+\cdot S^{W_\Theta}$ which completes the proof.
\end{proof}

\begin{proposition}
\label{thm:coinvlh}
With $\pi\colon S_W^{W_\Theta}\rightarrow S_W$ and $\iota\colon S_W\rightarrow S_{W_\Theta}$ as above,  
\begin{enumerate}
\item $\pi$ makes $S_W$ a free $S_W^{W_\Theta}$-module of rank $|W_\Theta|=\dim_\R(S_{W_\Theta})$
\item $\iota$ is surjective with kernel $\left(S_W^{W_\Theta}\right)^+\cdot S_W$.
\end{enumerate}
\end{proposition}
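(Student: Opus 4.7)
The plan is to derive both statements from a single input: by the Chevalley--Shephard--Todd theorem applied to the reflection group $W_\Theta$, the algebra $S$ is a free $S^{W_\Theta}$-module of rank $N=|W_\Theta|=\dim_\R S_{W_\Theta}$. Combined with Lemma~\ref{lem:alternrelcoinv} (which identifies $S_W^{W_\Theta}$ with $S^{W_\Theta}/((S^W)^+S^{W_\Theta})$), this freeness will propagate to the coinvariant quotients.

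To prove (i), I would fix a homogeneous $S^{W_\Theta}$-module basis $e_1,\dots,e_N$ of $S$ and let $\bar{e}_j\in S_W$ denote its image under $p\colon S\to S_W$. The direct-sum decomposition $S=\bigoplus_{j=1}^N S^{W_\Theta}\cdot e_j$ yields
\[
(S^W)^+\cdot S \;=\; \bigoplus_{j=1}^N (S^W)^+\cdot S^{W_\Theta}\cdot e_j,
\]
because $(S^W)^+\subseteq S^{W_\Theta}$, so every generator $\alpha\cdot\bigl(\sum_j h_j e_j\bigr)$ with $\alpha\in(S^W)^+$ and $h_j\in S^{W_\Theta}$ lies coordinate-wise in the stated sum, while the $S^{W_\Theta}\cdot e_j$ intersect trivially. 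Passing to the quotient $S_W=S/((S^W)^+S)$ and using Lemma~\ref{lem:alternrelcoinv} to identify each summand-quotient $(S^{W_\Theta}\cdot e_j)/((S^W)^+S^{W_\Theta}\cdot e_j)$ with $S_W^{W_\Theta}\cdot\bar{e}_j$, I obtain the $S_W^{W_\Theta}$-module isomorphism $S_W\cong\bigoplus_{j=1}^N S_W^{W_\Theta}\cdot\bar{e}_j$. Compatibility of scalar actions follows from the commutative square in Lemma~\ref{lem:alternrelcoinv}. The rank equality $N=\dim_\R S_{W_\Theta}$ is another standard consequence of Chevalley--Shephard--Todd.

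For (ii), surjectivity of $\iota$ is immediate: the composition $S\twoheadrightarrow S_W\xrightarrow{\iota} S_{W_\Theta}$ equals the quotient map $q\colon S\to S/I_\Theta$ by the triangle in Lemma~\ref{lem:alternrelcoinv}. For the kernel, since $\iota$ is induced on quotients by the inclusion $(S^W)^+\cdot S\hookrightarrow I_\Theta=(S^{W_\Theta})^+\cdot S$, I obtain
\[
\ker(\iota) \;=\; I_\Theta/\bigl((S^W)^+S\bigr) \;=\; p\bigl((S^{W_\Theta})^+\cdot S\bigr) \;=\; p\bigl((S^{W_\Theta})^+\bigr)\cdot S_W.
\]
Applying Lemma~\ref{lem:alternrelcoinv} once more to identify $p\bigl((S^{W_\Theta})^+\bigr)=\pi\bigl((S_W^{W_\Theta})^+\bigr)$ inside $S_W$ yields $\ker(\iota)=(S_W^{W_\Theta})^+\cdot S_W$, as required.

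The main obstacle is the direct-sum decomposition $(S^W)^+\cdot S=\bigoplus_j(S^W)^+S^{W_\Theta}\cdot e_j$ used in (i): this is the step that leverages the freeness of $S$ over $S^{W_\Theta}$ in an essential way, as it is exactly this uniqueness of basis coefficients that lets ideal membership in $(S^W)^+\cdot S$ be tested summand by summand. Once this decomposition is in hand, both parts of the proposition reduce to formal bookkeeping with ideals and quotients via Lemma~\ref{lem:alternrelcoinv}.
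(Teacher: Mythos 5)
Your proposal is correct and follows essentially the same route as the paper: both rest on the Chevalley freeness of $S$ over $S^{W_\Theta}$ together with Lemma~\ref{lem:alternrelcoinv}, with your explicit basis decomposition $(S^W)^+\cdot S=\bigoplus_j (S^W)^+S^{W_\Theta}\cdot e_j$ simply unwinding the paper's one-line base change ``$S_W=S\otimes_{S^W}\R$ is a free $S^{W_\Theta}\otimes_{S^W}\R$-module,'' and your kernel computation expanding the paper's appeal to the commuting triangle $q=\iota\circ p$. The extra detail you supply (directness of the sum, $p\bigl((S^{W_\Theta})^+\bigr)=(S_W^{W_\Theta})^+$ via gradedness) is sound and fills in steps the paper leaves implicit.
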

\begin{proof}
To see (i), recall that $S$ is a free $S^{W_\Theta}$-module of rank $|W_\Theta|\coloneqq t$.  Therefore $S_W=S\otimes_{S^W}\R$ is a free $S^{W_\Theta}\otimes_{S^W}\R$-module of rank $t$.
By Lemma \ref{lem:alternrelcoinv}, $S^{W_\Theta}_W\cong S^{W_\Theta}\otimes_{S^W}\R$.  The assertion of (ii) follows immediately from the commutativity of the triangle in the diagram in Lemma \ref{lem:alternrelcoinv}.
\end{proof}

The set $W^\Theta\coloneqq\{w\in W\mid l(w\cdot s_\gamma)=l(w)+1 \ \forall \gamma\in\Theta\}$ is a complete list of distinct coset representatives of the quotient $W/W_\Theta$.  Moreover every element $w\in W$ can be expressed uniquely as a product
\begin{equation}
\label{lem:mincosrep}
w=\bar{w}\cdot\hat{w}
\end{equation}
where $\bar{w}\in W^\Theta$ and $\hat{w}\in W_\Theta$ and $\ell(w)=\ell(\bar{w})+\ell(\hat{w})$; see \cite{BB} Proposition 2.4.4 and Corollary 2.4.5 or 
\cite{Hi}, Theorem 5.1 and Corollary 5.2.  

\begin{lemma}
\label{lem:dimU}
We have $\dim_\R\left(\left(\rel\right)^i\right)=\#\left\{\bar{w}\in W^\Theta\mid \ell(\bar{w})=i\right\}$.
\end{lemma}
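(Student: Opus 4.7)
My plan is to establish this by a Hilbert-series comparison, exploiting the Schubert bases together with the graded freeness statement of Proposition~\ref{thm:coinvlh}.

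First, I would use Proposition~\ref{thm:coinvlh} to obtain a graded vector-space isomorphism $S_W \cong S_W^{W_\Theta} \otimes_\R S_{W_\Theta}$. Since $\iota\colon S_W \to S_{W_\Theta}$ is a surjection of $\N$-graded $\R$-vector spaces, we can pick a homogeneous $\R$-linear section $s\colon S_{W_\Theta}\to S_W$. Combined with $\pi$, and using both conditions in Proposition~\ref{thm:coinvlh} exactly as in the Leray–Hirsch discussion of Section~2, the map
\[
S_W^{W_\Theta}\otimes_\R S_{W_\Theta} \longrightarrow S_W,\qquad b\otimes f\longmapsto \pi(b)\cdot s(f)
\]
is a graded $S_W^{W_\Theta}$-module isomorphism. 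Taking Poincar\'e series, this gives the multiplicative identity
\[
H(S_W;t)\ =\ H(S_W^{W_\Theta};t)\cdot H(S_{W_\Theta};t).
\]

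Second, I would compute $H(S_W;t)$ and $H(S_{W_\Theta};t)$ in terms of lengths using the Schubert-basis fact recorded just above the statement: the classes $\{X_w \mid w\in W\}$ form a homogeneous $\R$-basis of $S_W$ with $\deg X_w=\ell(w)$, and analogously for $W_\Theta$. Hence
\[
H(S_W;t)=\sum_{w\in W}t^{\ell(w)},\qquad H(S_{W_\Theta};t)=\sum_{\hat w\in W_\Theta}t^{\ell(\hat w)}.
\]

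Third, I would invoke the parabolic factorization recalled in \eqref{lem:mincosrep}: every $w\in W$ factors uniquely as $w=\bar w\cdot\hat w$ with $\bar w\in W^\Theta$, $\hat w\in W_\Theta$, and $\ell(w)=\ell(\bar w)+\ell(\hat w)$. Grouping the sum for $H(S_W;t)$ according to this factorization yields
\[
H(S_W;t)\ =\ \Bigl(\sum_{\bar w\in W^\Theta}t^{\ell(\bar w)}\Bigr)\cdot\Bigl(\sum_{\hat w\in W_\Theta}t^{\ell(\hat w)}\Bigr)\ =\ \Bigl(\sum_{\bar w\in W^\Theta}t^{\ell(\bar w)}\Bigr)\cdot H(S_{W_\Theta};t).
\]
Comparing with the factorization from the first step and cancelling the (nonzero) polynomial $H(S_{W_\Theta};t)$ identifies $H(S_W^{W_\Theta};t)=\sum_{\bar w\in W^\Theta}t^{\ell(\bar w)}$, and reading off the coefficient of $t^i$ gives the claimed formula.

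The only non-formal ingredient is the graded tensor decomposition in the first paragraph; this is the place where both freeness and the kernel condition in Proposition~\ref{thm:coinvlh} are essential, and it is really the same argument that appears in the opening pages of Section~2 (existence of a homogeneous section turning the free-module structure into a tensor product with $S_{W_\Theta}$). Once that is in place, the rest is a routine Hilbert-series bookkeeping using the Schubert bases and the additivity of length along minimal coset representatives.
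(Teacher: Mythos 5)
Your proposal is correct and follows essentially the same route as the paper: both rest on the graded isomorphism $S_W\cong\rel\otimes_\R S_{W_\Theta}$ from Proposition \ref{thm:coinvlh}, the Schubert-basis dimension counts, and the length-additive factorization \eqref{lem:mincosrep}. The paper merely runs the comparison as an induction on $i$, which is exactly the coefficient-by-coefficient form of your Hilbert-series cancellation, so the two arguments coincide in substance.
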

\begin{proof}
The proof is by induction on $i\geq 0$, the base case being trivial.  By Proposition \ref{thm:coinvlh}, we conclude that there is an isomorphism of graded $\R$-vector spaces
\begin{equation}
\label{eq:tensorcoinv}
S_W\cong\rel\otimes_\R S_{W_{\Theta}}.
\end{equation}
Using \eqref{eq:tensorcoinv} in conjunction with the induction hypothesis yields
\begin{align*}
\dim\left(\left(\rel\right)^i\right)= & \dim\left(\left(S_W\right)^i\right)- \sum_{j=0}^{i-1}\left(\dim\left(\left(\rel\right)^j\right)\right)\cdot \left(\dim\left(\left(S_{W_\Theta}\right)^{i-j}\right)\right)\\
= & \#\left\{\ell(w)=i\right\}-\sum_{j=0}^{i-1}\left(\#\left\{\ell(\bar{w})=j\right\}\right)\cdot \left(\#\left\{\ell(\hat{w})=i-j\right\}\right)\\
= & \#\left\{\ell(w)=i\right\}-\#\left\{w=\bar{w}\cdot\hat{w}\mid\ell(w)=i, \ 0\leq\ell(\bar{w})\leq (i-1)\right\}\\
= & \#\left\{w=\bar{w}\cdot\hat{w}\mid\ell(w)=\ell(\bar{w})=i\right\}\\
= & \#\left\{\bar{w}\in W^\Theta\mid\ell(\bar{w})=i\right\}\\
\end{align*}
as desired.
\end{proof}

\begin{proposition}
\label{prop:Ubasis}
The elements $\left\{X_{\bar{w}}\mid \bar{w}\in W^\Theta\right\}$ lie in $S_W^{W_\Theta}$.
\end{proposition}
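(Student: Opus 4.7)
The plan is to reduce the claim to a calculation with BGG operators together with a short check in the Bruhat order. First I would observe that since $W_\Theta$ is generated by the simple reflections $\{s_\gamma\mid\gamma\in\Theta\}$, showing $X_{\bar{w}}\in\rel$ reduces to verifying $s_\gamma(X_{\bar{w}})=X_{\bar{w}}$ in $S_W$ for each $\gamma\in\Theta$. Using the pointwise identity $f-s_\gamma(f)=\gamma\cdot A_\gamma(f)$ in $S$, which descends to $S_W$ since $A_\gamma(I)\subseteq I$, this further reduces to showing that $\bar{A}_\gamma(X_{\bar{w}})=0$ in $(S_W)^{\ell(\bar{w})-1}$.

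Next, since the Schubert basis $\{X_w\mid\ell(w)=i\}$ is by construction dual to the basis $\{\bar{A}_u\mid\ell(u)=i\}$ of $T^i$, an element of $(S_W)^{\ell(\bar{w})-1}$ vanishes if and only if it is killed by every $\bar{A}_u$ with $\ell(u)=\ell(\bar{w})-1$. I would therefore compute
\[
\bar{A}_u\bigl(\bar{A}_\gamma(X_{\bar{w}})\bigr)=\bigl(\bar{A}_u\circ\bar{A}_{s_\gamma}\bigr)(X_{\bar{w}})
\]
for every such $u$. By the composition rule \eqref{prop:multinvar1}, this composite is $\bar{A}_{u\cdot s_\gamma}$ when $\ell(u\cdot s_\gamma)=\ell(u)+1=\ell(\bar{w})$ and vanishes otherwise.

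In the nontrivial case, the Schubert duality gives $\bar{A}_{u\cdot s_\gamma}(X_{\bar{w}})=\delta_{u\cdot s_\gamma,\bar{w}}$, which is nonzero only if $u\cdot s_\gamma=\bar{w}$; but that forces $\bar{w}\cdot s_\gamma=u$ with $\ell(\bar{w}\cdot s_\gamma)=\ell(\bar{w})-1$, contradicting the defining condition $\ell(\bar{w}\cdot s_\gamma)=\ell(\bar{w})+1$ for $\bar{w}\in W^\Theta$. Hence every pairing vanishes, so $\bar{A}_\gamma(X_{\bar{w}})=0$ and the proof concludes. The argument is essentially combinatorial book-keeping; the only point to verify is that the BGG composition rule and Schubert duality both transfer cleanly from $S$ to $S_W$, which is immediate from the cited results, so I do not anticipate any real obstacle.
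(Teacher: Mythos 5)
Your argument is correct, and it is worth noting that the paper itself gives no proof at all here: it simply cites \cite{Hil}, Corollary 4.2. What you have written is essentially the standard divided-difference proof that lives behind that citation, reconstructed from ingredients already present in the paper: the Leibniz-type identity $f-s_\gamma(f)=\gamma\cdot A_\gamma(f)$, the fact that $A_\gamma(I)\subseteq I$ so that $\bar{A}_\gamma$ is well defined on $S_W$, the composition rule \eqref{prop:multinvar1}, and the duality between the Schubert classes $X_w$ and the operators $\bar{A}_u$ in each degree. Each step checks out: reducing $W_\Theta$-invariance to $s_\gamma$-invariance for $\gamma\in\Theta$ is valid since $W_\Theta$ is generated by those simple reflections; vanishing of $\bar{A}_\gamma(X_{\bar{w}})$ is indeed sufficient (which is all you need) for $s_\gamma(X_{\bar{w}})=X_{\bar{w}}$ via $X_{\bar{w}}-s_\gamma(X_{\bar{w}})=[\gamma]\cdot\bar{A}_\gamma(X_{\bar{w}})$; testing against all $\bar{A}_u$ with $\ell(u)=\ell(\bar{w})-1$ suffices because these form a basis of $T^{\ell(\bar{w})-1}$; and the final contradiction correctly uses that $u\cdot s_\gamma=\bar{w}$ forces $u=\bar{w}\cdot s_\gamma$ with $\ell(\bar{w}\cdot s_\gamma)=\ell(\bar{w})-1$, violating the defining condition of $W^\Theta$. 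The one convention you should double-check in writing this up is the order in the composition rule: the paper's \eqref{prop:multinvar1} reads $A_u\circ A_v=A_{u\cdot v}$ when lengths add, so $\bar{A}_u\circ\bar{A}_{s_\gamma}=\bar{A}_{u\cdot s_\gamma}$ matches the \emph{right}-handed condition $\ell(w\cdot s_\gamma)=\ell(w)+1$ defining $W^\Theta$, exactly as your argument requires; had the conventions been mismatched (left cosets versus right descents) the cancellation would fail. What your approach buys over the paper's is self-containedness: the proposition follows from Proposition \ref{thm:fundbgg} and Schubert duality alone, with no appeal to an external reference.
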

\begin{proof}
See \cite{Hil} Corollary 4.2.
\end{proof}

Let $r\coloneqq\ell(\bar{w}_0)$.  Proposition \ref{prop:Ubasis} together with Lemma \ref{lem:dimU} implies that the elements $\bigsqcup_{i=1}^r\left\{X_{\bar{w}}\mid\bar{w}\in W_\Theta, \ \ \ell(\bar{w})=i\right\}$ are a vector space basis for the relative coinvariant ring $\rel$, called a \emph{relative Schubert basis} for $S_W^{W_\Theta}$.

\section{Lefschetz properties}
In this section we use tools from Section 3 to study the Lefschetz properties of the \emph{relative} coinvariant ring.

Define the vector $\rho\coloneqq\frac{1}{2}\sum_{\gamma\in\Phi^+}\gamma$.  Note that for $\alpha$ a simple root, $\check{\alpha}(\rho)=1$.  Indeed we can write $$s_\alpha(\rho)=s_\alpha(\rho-\frac{1}{2}\alpha)+\frac{1}{2}s_\alpha(\alpha).$$  Since $s_\alpha(\rho-\frac{1}{2}\alpha)=\rho-\frac{1}{2}\alpha$ (using \eqref{thm:simprefpos}) we see that $s_\alpha(\rho)=\rho-\alpha$.  On the other hand $s_\alpha(\rho)\coloneqq \rho-\check{\alpha}(\rho)\alpha$, hence $\check{\alpha}(\rho)=1$ as desired.  Consequently $\check{\alpha}(\rho)$ must be positive for every positive root $\alpha$.

Define $\rho_\Theta=\frac{1}{2}\sum_{\gamma\in\Phi^+_\Theta}\gamma$ and $\bar{\rho}=\frac{1}{2}\sum_{\gamma\in\Phi^+\setminus\Phi^+_\Theta}\gamma$; we have $\rho=\rho_\Theta+\bar{\rho}$.  Note that $\bar{\rho}\in S^{W_\Theta}$.  Indeed for $\alpha\in\Theta\subseteq\Delta$ \eqref{thm:simprefpos} implies that $$s_\alpha(\Phi^+\setminus\Phi^+_\Theta)\subseteq\Phi^+\setminus\Phi^+_\Theta.$$  Since $W_\Theta$ is generated by $s_\alpha$ ($\alpha\in\Theta$), we see that $W_\Theta$ just permutes the roots in $\Phi^+\setminus\Phi^+_\Theta$, hence preserves $\bar{\rho}$.  We have the following useful relationship between $\bar{\rho}$ and the roots in $\Phi^+\setminus\Phi^+_\Theta$.

\begin{lemma}
\label{sublem:relpos}
$\check{\alpha}(\bar{\rho})>0$ for all $\alpha\in\Phi^+\setminus\Phi^+_\Theta$.
\end{lemma}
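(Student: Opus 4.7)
The plan is to reduce the inequality to the case of simple roots, where $\check{\alpha_i}(\bar{\rho})$ can be computed directly from the decomposition $\rho = \rho_\Theta + \bar{\rho}$ together with the identity $\check{\alpha_i}(\rho) = 1$ established in the paragraph preceding the lemma.

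First I would dispense with the simple roots. For $\beta \in \Theta$, the $W_\Theta$-invariance of $\bar{\rho}$ noted in the excerpt gives $s_\beta(\bar{\rho}) = \bar{\rho}$, and by the definition of a reflection $s_\beta(x) = x - \check{\beta}(x)\beta$ this forces $\check{\beta}(\bar{\rho}) = 0$. For $\alpha_i \in \Delta \setminus \Theta$, I would use $\rho = \rho_\Theta + \bar{\rho}$ to write
$$\check{\alpha_i}(\bar{\rho}) = \check{\alpha_i}(\rho) - \check{\alpha_i}(\rho_\Theta) = 1 - \check{\alpha_i}(\rho_\Theta).$$
Since $\rho_\Theta$ is a non-negative $\R$-linear combination of the simple roots in $\Theta$, and since $\langle \alpha_j, \alpha_k\rangle \leq 0$ for any two distinct simple roots, we get $\langle \alpha_i, \rho_\Theta\rangle \leq 0$, hence $\check{\alpha_i}(\rho_\Theta) \leq 0$, and therefore $\check{\alpha_i}(\bar{\rho}) \geq 1 > 0$.

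For a general $\alpha \in \Phi^+ \setminus \Phi^+_\Theta$, I would expand $\alpha = \sum_{\alpha_i \in \Delta} c_i \alpha_i$ with every $c_i \geq 0$. Bilinearity of $\langle\ ,\ \rangle$ gives
$$\check{\alpha}(\bar{\rho}) = \frac{2\langle \alpha, \bar{\rho}\rangle}{\langle\alpha,\alpha\rangle} = \frac{2}{\langle\alpha,\alpha\rangle}\sum_{\alpha_i \in \Delta} c_i \langle \alpha_i, \bar{\rho}\rangle,$$
reducing matters to the simple-root case above. Every summand is non-negative, and since $\alpha \notin \Phi_\Theta$ there must exist at least one index $j$ with $c_j > 0$ and $\alpha_j \in \Delta \setminus \Theta$ --- otherwise $\alpha$ would be a root lying in $\spa(\Theta)$, forcing $\alpha \in \Phi_\Theta$, a contradiction. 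That term contributes strictly positively by the second step, so $\check{\alpha}(\bar{\rho}) > 0$.

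I expect no real obstacle: the only even mildly delicate point is the sign estimate $\check{\alpha_i}(\rho_\Theta) \leq 0$, which rests on the standard non-positivity of pairwise inner products among distinct simple roots; the rest is a routine bookkeeping reduction from general positive roots to simple ones.
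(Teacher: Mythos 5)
Your proof is correct and follows essentially the same route as the paper: both reduce to simple roots via $\rho=\rho_\Theta+\bar{\rho}$ and $\check{\alpha}(\rho)=1$, use non-positivity of inner products among distinct simple roots, and use the $W_\Theta$-invariance of $\bar{\rho}$ to annihilate the $\Theta$-part of the expansion of a general $\alpha\in\Phi^+\setminus\Phi^+_\Theta$ (the paper writes this as $\langle\alpha_\Theta,\bar{\rho}\rangle=0$, you as $\check{\beta}(\bar{\rho})=0$ for $\beta\in\Theta$, which is the same thing). If anything, your write-up is marginally more careful than the paper's: you correctly assert only $\check{\alpha_i}(\rho_\Theta)\leq 0$ (the paper's strict inequality can fail when $\alpha_i$ is orthogonal to all of $\Theta$, though this is harmless), and you explicitly verify that some coefficient $c_j>0$ with $\alpha_j\in\Delta\setminus\Theta$, which the paper leaves implicit in its final display.
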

\begin{proof}
First assume that $\alpha\in\Delta\setminus\Theta$.  Then $\check{\alpha}(\rho_\Theta)<0$ since $\langle\alpha,\alpha'\rangle<0$ for all $\alpha'\in\Delta\setminus\{\alpha\}$.  On the other hand we have already seen that $\check{\alpha}(\rho)>0$.  Hence we conclude that $\check{\alpha}(\bar{\rho})>0$ as well.

Now let $\alpha\in\Phi^+\setminus\Phi^+_\Theta$ be arbitrary.  There is a unique $\alpha_\Theta\in\spa_\R\{\Theta\}$ such that 
$$\alpha=\sum_{\gamma\in\Delta\setminus\Theta}c_\gamma\cdot\gamma+\alpha_\Theta$$
for some $c_\gamma\geq 0$.  Since $\bar{\rho}$ is $W_\Theta$-invariant, we must have $\langle\alpha_\Theta,\bar{\rho}\rangle=0$.  Thus we have 
\begin{equation}
\label{eq:relpositivity}
\langle\alpha,\bar{\rho}\rangle=\sum_{\gamma\in\Delta\setminus\Theta}c_\gamma \cdot\langle\gamma,\bar{\rho}\rangle > \ 0.
\end{equation}
Using formula \eqref{eq:coroot}, we conclude that $\check{\alpha}(\bar{\rho})>0$ for all $\alpha\in\Phi^+\setminus\Phi^+_\Theta$, as desired.
\end{proof}

By abuse of notation we use the symbol $\bar{\rho}$ to denote the equivalence class $\left[\bar{\rho}\right]$ in $S_W^{W_\Theta}$.  The element $\bar{\rho}\in\left(\rel\right)^1$ is our ``candidate" Lefschetz element in the relative coinvariant ring.  We want to show that the map 
\begin{equation}
\label{eq:lefmaps}
\bar{\rho}^{r-2i}\colon\left(\rel\right)^i\rightarrow\left(\rel\right)^{r-i}
\end{equation}
is an isomorphism for $0\leq i\leq\lfloor\frac{r}{2}\rfloor$.  
The idea is to use our relative Schubert basis to compute the matrix for \eqref{eq:lefmaps}.

Fix $0\leq i\leq\lfloor\frac{r}{2}\rfloor$ and let $\bar{u}\in W^\Theta$ be an element of length $i$.  Then we have
\begin{equation}
\label{eq:lefmat}
\bar{\rho}^{r-2i}\cdot X_{\bar{u}}=\sum_{\ell(\bar{v})=r-i}c^i_{\bar{v}\bar{u}}\cdot X_{\bar{v}}.
\end{equation}
Note that the coefficient $c^i_{\bar{v}\bar{u}}$ is just $A_{\bar{v}}(\bar{\rho}^{r-2i}\cdot X_{\bar{u}})$ for each $\ell(\bar{v})=r-i$ and $\ell(\bar{u})=i$.  We can compute these coefficients using Corollary \ref{cor:corcorbgg}.  

We write $\xyto{\bar{w}^\prime\ar[r]^\beta & \bar{w}}$ to mean that $\beta\in\Phi^+$, $\bar{w}',\bar{w}\in W^\Theta$, $s_\beta\cdot\bar{w}^\prime=\bar{w}$ and $\ell(s_\beta\cdot\bar{w}^\prime)=\ell(\bar{w}^\prime)+1$.

\begin{proposition}
\label{prop:matentr}
For each $\bar{u},\bar{v}\in W^\Theta$, with $\ell(\bar{u})=i$ and $\ell(\bar{v})=r-i$ we have
\begin{equation}
\label{eq:matentr}
A_{\bar{v}}(\bar{\rho}^{r-2i}\cdot X_{\bar{u}})=\sum_{\scriptsize{\xyto{\bar{u}=\bar{u}_1\ar[r]^-{\beta_1} & \bar{u}_2 \cdots \bar{u}_s\ar[r]^-{\beta_s} & \bar{v}}}}\check{\beta_1}(\bar{u}_1(\bar{\rho}))\cdots\check{\beta_s}(\bar{u}_s(\bar{\rho}))
\end{equation}
where $s=r-2i$.
\end{proposition}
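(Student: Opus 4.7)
The plan is to apply Corollary \ref{cor:corcorbgg} directly, using a single substitution step to identify the surviving term via the duality between Schubert classes and BGG operators.

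First, I would set $w := \bar v$ (so $\ell(w) = r-i$), take $s := r-2i$ linear forms $\chi_1 = \cdots = \chi_s := \bar\rho$, and let $\sigma := X_{\bar u} \in (S_W)^i$, which has the required degree $\ell(w) - s = i$. Corollary \ref{cor:corcorbgg} then yields
$$A_{\bar v}(\bar\rho^{\,r-2i}\cdot X_{\bar u}) = \sum_{u_1 \to u_2 \to \cdots \to u_s \to \bar v} \check\beta_1(u_1(\bar\rho)) \cdots \check\beta_s(u_s(\bar\rho)) \cdot A_{u_1}(X_{\bar u}),$$
where the sum runs over all Bruhat-order chains in $W$ of length $s$ terminating at $\bar v$.

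The crux is to observe that the source has length $\ell(u_1) = \ell(\bar v) - s = i = \ell(\bar u)$. Since the Schubert basis $\{X_w \mid \ell(w) = i\}$ of $(S_W)^i$ is by construction the dual basis to $\{\bar A_w \mid \ell(w) = i\} \subset T^i$, we have $A_{u_1}(X_{\bar u}) = \delta_{u_1,\bar u}$ whenever $\ell(u_1) = i$. Hence only chains starting at $u_1 = \bar u$ survive, and the remaining terms exactly reproduce the right-hand side of the stated formula.

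This proof poses no substantive obstacle, being a one-step bookkeeping application of Corollary \ref{cor:corcorbgg} together with the defining dual-basis property of the Schubert basis. The bars on the intermediate chain elements $\bar u_j$ in the statement are cosmetic; they reflect that the chain begins at $\bar u \in W^\Theta$ and ends at $\bar v \in W^\Theta$, while the intermediate elements $u_j$ are a priori arbitrary elements of $W$. (One could, using the $W_\Theta$-invariance of $\bar\rho$ and Lemma \ref{sublem:relpos}, go further and analyze when intermediate factors vanish, but this refinement is not required for the formula as stated.)
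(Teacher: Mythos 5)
Your application of Corollary \ref{cor:corcorbgg} and the dual-basis observation $\bar{A}_{u_1}(X_{\bar{u}})=\delta_{u_1,\bar{u}}$ (valid since $\ell(u_1)=\ell(\bar{v})-s=i$) correctly reduces the left-hand side to a sum over \emph{all} chains in $W$ from $\bar{u}$ to $\bar{v}$. But your final claim --- that the bars on the intermediate elements are ``cosmetic'' and that no further analysis is required --- is a genuine gap. By the notational convention fixed immediately before the proposition, $\xyto{\bar{w}'\ar[r]^-{\beta} & \bar{w}}$ requires \emph{both} $\bar{w}'$ and $\bar{w}$ to lie in $W^\Theta$, so the right-hand side of \eqref{eq:matentr} sums only over chains contained entirely in $W^\Theta$. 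Your argument produces a sum over chains in $W$ that may leave $W^\Theta$ at intermediate steps, and equality of the two sums is precisely what remains to be proved. This restriction is not decorative: everything downstream --- the interpretation of \eqref{eq:matrix} as a weighted path matrix on the graph $G$ with vertex set $W^\Theta$, the positivity of the edge weights via Lemma \ref{sublem:relpos}, and the Gessel--Viennot computations of Propositions \ref{prop:altypeslefschetz}--\ref{prop:H4E8lef} --- depends on the paths living in $W^\Theta$.

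The missing step, which is the actual content of the paper's proof, is the vanishing of any chain that exits $W^\Theta$: if $\xyto{u\ar[r]^-{\beta} & v}$ with $u\in W^\Theta$ and $v\notin W^\Theta$, then $\check{\beta}(u(\bar{\rho}))=0$. Indeed, if $v\notin W^\Theta$ one may choose a reduced expression $v=s_1\cdots s_r$ ending in a simple reflection for some $\alpha_r\in\Theta$; since $u=v\cdot s_{u^{-1}(\beta)}$ has length $\ell(v)-1$ and lies in $W^\Theta$, one gets $u=s_1\cdots s_{r-1}$ and hence $u^{-1}(\beta)=\alpha_r\in\Phi_\Theta^+$. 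Then $\check{\beta}(u(\bar{\rho}))=\widecheck{\left(u^{-1}(\beta)\right)}(\bar{\rho})=0$, because $\bar{\rho}$ is $W_\Theta$-invariant and so is orthogonal to every root in $\Phi_\Theta$. (Note this uses $W_\Theta$-invariance of $\bar{\rho}$, not Lemma \ref{sublem:relpos}, which instead gives strict positivity for roots in $\Phi^+\setminus\Phi^+_\Theta$.) Since a chain starting at $\bar{u}\in W^\Theta$ that ever leaves $W^\Theta$ acquires a zero factor at its first exiting step, only chains entirely in $W^\Theta$ contribute, and the formula as stated follows. With this paragraph inserted, your proof coincides with the paper's.
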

\begin{proof}
Applying Corollary \ref{cor:corcorbgg} to the case where $\chi_i=\rho$, $1\leq i\leq s$, and $\sigma= X_{\bar{u}}$, we have
\begin{equation}
\label{eq:matentr1}
A_{\bar{v}}(\bar{\rho}^{r-2i}\cdot X_{\bar{u}})=\sum_{\scriptsize{\xyto{\bar{u}=u_1\ar[r]^-{\beta_1} & u_2 \cdots u_s\ar[r]^-{\beta_s} & \bar{v}\\}}}\check{\beta}_1(u_1(\bar{\rho}))\cdots\check{\beta}_s(u_s(\bar{\rho})).
\end{equation}
It remains to show that each non-zero term in \eqref{eq:matentr1} comes from a ``path" in $W^\Theta$.  Note that if $\xyto{u\ar[r]^-{\beta} & v\\}$ with $u\in W^\Theta$ and $v\notin W^\Theta$ then $\check{\beta}(u(\bar{\rho}))=0$.  Indeed if $v\notin W^\Theta$ then there is a reduced expression $v=s_1\cdots s_r$ with $\alpha_r\in \Theta$.  On the other hand $\xyto{u\ar[r]^-{\beta} & v\\}$ implies that $v\cdot s_{u^{-1}(\beta)}=u$ and $u\in W^\Theta$ implies $u=s_1\cdots s_{r-1}$.  Hence $u^{-1}(\beta)=\alpha_r\in\Phi_\Theta^+$ which forces $\check{\beta}(u(\rho))=0$.  We conclude that the only non-zero summands in \eqref{eq:matentr1} come from ``paths" in $W^\Theta$, as desired.
\end{proof}

\begin{remark*}
A result of Deodhar (see \cite{Deo}, Corollary 3.8) states that given two elements $\bar{w}_1, \bar{w}_2\in W^\Theta$ with $\bar{w}_1\leq\bar{w}_2$ (where $\leq$ denotes the Bruhat ordering on $W$), there exist elements $\bar{u}_1,\ldots, \bar{u}_r\in W^\Theta$ such that 
$$\xyto{\bar{w}_1=\bar{u}_0\ar[r]^-{\beta_0} & \bar{u}_1 \cdots \bar{u}_r \ar[r]^-{\beta_{r}} & \bar{u}_{r+1}=\bar{w}_2\\}.$$  
In particular this guarantees that the sum in \eqref{eq:matentr} is never vacuous.
\end{remark*}

We can express our matrix for the map in \eqref{eq:lefmaps} as
\begin{equation}
\label{eq:matrix}
\left(\begin{array}{ccc}
& \vdots &\\
\cdots& c^i_{\bar{v}\bar{u}} &\cdots \\
& \vdots & \\
\end{array}\right)_{\substack{\ell(\bar{v})=r-i\\ 
\ell(\bar{u})=i\\}}
\end{equation}  
where entry $c^i_{\bar{v}\bar{u}}$ is given by \ref{eq:matentr}.  Proposition \ref{prop:matentr} then gives a nice combinatorial interpretation to the matrix \eqref{eq:matrix} as a \emph{weighted path matrix} with respect to an appropriately weighted directed (acyclic) graph.  This interpretation lends itself to combinatorial tools effective in studying the determinant of \eqref{eq:matrix}.  

The classification of finite reflection groups says that a finite reflection group is either irreducible or is a direct product of irreducible ones (see \cite{Hum} Chapter 2).  Thus by Lemma \ref{lem:prod}, it suffices to prove Theorem \ref{thm:coinvlef} for $W$ irreducible.  Any given irreducible finite reflection group is of one of the ten \emph{types} shown in Table \ref{fig:class}.  For the remainder of this section we will use this list to exhibit, in each type of $W$,  a parabolic subgroup $W_\Theta\subset W$ for which the relative coinvariant ring $\rel$ has the strong Lefschetz property.  

\begin{center} 
\begin{table}[t]
\caption{classification of finite reflection groups}
\begin{center}
\begin{tabular}{|c|l|p{1in}|}
\hline
$A_n \ \ (n\geq 1)$ & $\xymatrixcolsep{3pc}\xymatrix{\circ\ar@{-}[r]&\circ&\cdots&\circ\ar@{-}[r]&\circ\\
&&&&\\}$\\
\hline
$B_n \ \ (n\geq 2)$ & $\xymatrixcolsep{3pc}\xymatrix{\circ\ar@{-}[r]&\circ&\cdots&\circ\ar@2{-}[r]&\circ\\
&&&&\\}$\\

\hline
$D_n \ \ (n\geq 4)$ & 
$\xymatrixcolsep{3pc}\xymatrix{&&&&&\circ\\
\circ\ar@{-}[r]&\circ&\cdots&\circ\ar@{-}[r]&\circ\ar@{-}[ur]\ar@{-}[dr] &\\
&&&&&\circ\\}$\\

\hline

$E_6$ & $\xymatrixcolsep{3pc}\xymatrix{&&\circ&&\\
\circ\ar@{-}[r]&\circ\ar@{-}[r]&\circ\ar@{-}[r]\ar@{-}[u]& \circ\ar@{-}[r]&\circ}$\\

\hline

$E_7$ & $\xymatrixcolsep{3pc}\xymatrix{&&\circ&&&\\
\circ\ar@{-}[r]&\circ\ar@{-}[r]&\circ\ar@{-}[r]\ar@{-}[u]& \circ\ar@{-}[r]&\circ\ar@{-}[r]&\circ}$\\

\hline

$E_8$ & $\xymatrixcolsep{3pc}\xymatrix{&&\circ&&&&\\
\circ\ar@{-}[r]&\circ\ar@{-}[r]&\circ\ar@{-}[r]\ar@{-}[u]& \circ\ar@{-}[r]&\circ\ar@{-}[r]&\circ\ar@{-}[r]&\circ}$\\

\hline

$F_4$ & $\xymatrixcolsep{3pc}\xymatrix{&&&\\
\circ\ar@{-}[r]&\circ\ar@{-}[r]& \circ\ar@2{-}[r]&\circ\\}$\\

\hline

$H_3$ & $\xymatrixcolsep{3pc}\xymatrix{&&\\
\circ\ar@3{-}[r]&\circ\ar@{-}[r]& \circ\\
}$\\

\hline

$H_4$ & $\xymatrixcolsep{3pc}\xymatrix{&&&\\
\circ\ar@3{-}[r]&\circ\ar@{-}[r]& \circ\ar@{-}[r]&\circ\\}$\\
\hline

$I_2(m)$ & $\xymatrixcolsep{3pc}\xymatrix{&\\
\circ\ar@{-}[r]^-m &\circ\\}$\\
\hline
\end{tabular}
\end{center}
\label{fig:class}
\end{table}
\end{center}

Let $G$ be the graph with vertex set $W^\Theta$ where two elements $\bar{w}'$ and $\bar{w}$ are joined by an edge if and only if $\xyto{\bar{w}'\ar[r]^-{\beta} & \bar{w}}$ for some $\beta\in\Phi^+$.  The graph $G$ is the Hasse diagram of the Bruhat order on $W$ restricted to $W^\Theta$ (by \cite{Deo} Corollary 3.8), hence it is naturally \emph{directed} and \emph{acyclic}.  To each (directed) edge of $G$, $e=\xyto{\bar{w}'\ar[r]^-{\beta} & \bar{w}}$, we assign the weight 
\begin{equation}
\label{eq:edgewts}
\omega(e)\coloneqq\check{\beta}(\bar{w}'(\bar{\rho})) \ > \ 0.
\end{equation}  
Note that the weights in \eqref{eq:edgewts} are positive.  Indeed $\xyto{\bar{u}\ar[r]^\beta & \bar{v}}$ implies $\bar{u}^{-1}(\beta)\in\Phi^+\setminus\Phi^+_\Theta$ by Proposition \ref{thm:posinclen}.  Now apply Lemma \ref{sublem:relpos}.

To each directed path $P$ in this graph, we define its weight by $$\omega(P)\coloneqq\prod_{e\in P}\omega(e).$$
Let $V^i$ denote the elements of $W^\Theta$ of length $i$, labelled $\{\bar{u}_1,\ldots,\bar{u}_N\}$ and label the elements $V^{r-i}$ as $\{\bar{v}_1,\ldots,\bar{v}_N\}$.  Then the $(j,k)^{th}$ entry $c^i_{\bar{v}_j\bar{u}_k}\coloneqq c^i_{jk}$ in \eqref{eq:matrix} is given by 
$$c^i_{jk}=\sum_{\scriptsize{P\colon\bar{u}_k\rightarrow\bar{v}_j}}\omega(P),$$ where the sum is over all directed paths $P$ from $\bar{u}_k$ to $\bar{v}_j$.  

A \emph{path system} $\mathcal{P}$ from $V^i$ to $V^{r-i}$ is a permutation $\sigma\in S_N$ together with a collection of paths $P_a\colon \bar{u}_a\rightarrow\bar{v}_{\sigma(a)}$ for $1\leq a\leq N$.  The \emph{sign} of a path system is the sign of the corresponding permutation: $\sgn(\mathcal{P})\coloneqq\sgn(\sigma)$.  We define the \emph{weight} of a path system to be the product of the weights of the corresponding paths: $\omega(\mathcal{P})\coloneqq\prod_{a=1}^N\omega(P_a)$.  It is an easy exercise to verify the identity 
\begin{equation}
\label{eq:det}
\det\left(\left(c^i_{jk}\right)\right)=\sum_{\mathcal{P}}\sgn(\mathcal{P})\cdot\omega(\mathcal{P}),
\end{equation}
where the sum is taken over \emph{all} path systems from $V^i$ to $V^{r-i}$.  

It will be useful to reference the following easy Lemma.
\begin{lemma}
\label{lem:easy}
If $\mathcal{O}$ is a collection of path systems from $V^i$ to $V^{r-i}$ such that $\sgn\colon\mathcal{O}\rightarrow\left\{+,-\right\}$ is constant, then \begin{equation}
\label{eq:det2}
\det\left(\left(c^i_{jk}\right)\right)=\sum_{\mathcal{P}\in\mathcal{O}}\sgn(\mathcal{P})\cdot\omega(\mathcal{P})= \pm\sum_{\mathcal{P}\in\mathcal{O}}\omega(\mathcal{P})
\end{equation}
is non-zero.
\end{lemma}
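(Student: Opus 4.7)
The plan is to read the lemma off directly from the path-sum determinant formula \eqref{eq:det} together with the positivity of edge weights asserted in \eqref{eq:edgewts}. First I would observe that in order for the first equality of \eqref{eq:det2} to make sense, $\mathcal{O}$ must be identified with the full collection of path systems from $V^i$ to $V^{r-i}$; once this identification is in place, \eqref{eq:det} literally is the first equality, and the hypothesis that $\sgn$ is constant on $\mathcal{O}$ lets me pull the common sign outside the sum to obtain the second equality.

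For the non-vanishing assertion, I would unpack $\omega(\mathcal{P}) = \prod_{a=1}^N \prod_{e \in P_a} \omega(e)$. Each edge weight $\omega(e) = \check{\beta}(\bar{w}'(\bar{\rho}))$ is strictly positive by \eqref{eq:edgewts}, which in turn rests on \eqref{thm:posinclen} (forcing $\bar{w}'^{-1}(\beta) \in \Phi^+ \setminus \Phi^+_\Theta$ for any edge $\xyto{\bar{w}'\ar[r]^\beta & \bar{w}}$ of $G$) combined with Lemma \ref{sublem:relpos}. Thus each $\omega(\mathcal{P})$ is a product of strictly positive real numbers, and the signed sum collapses to $\pm$ a sum of strictly positive reals.

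The only remaining subtlety is to verify that $\mathcal{O}$ is nonempty, which I would handle using Deodhar's result quoted in the remark following Proposition \ref{prop:matentr}: given any permutation $\sigma \in S_N$ satisfying $\bar{u}_a \leq \bar{v}_{\sigma(a)}$ in the Bruhat order on $W^\Theta$ for every $a$, a saturated chain in $W^\Theta$ connects each matched pair and produces an honest path system. This existence step is the only place where genuine input is needed, and in the envisaged type-by-type applications in Section 4 the relevant pairings will be supplied directly. I therefore expect essentially no obstacle in the lemma itself; it is a bookkeeping reformulation of \eqref{eq:det} under the constant-sign hypothesis, assembled from items already established.
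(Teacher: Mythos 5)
Your argument is correct and is essentially the paper's own proof, which disposes of the lemma in one line by citing the positivity of the edge weights \eqref{eq:edgewts}; your additional bookkeeping (non-emptiness of $\mathcal{O}$ via the Deodhar remark, and noting that some identity must supply the first equality of \eqref{eq:det2}) is sound and, if anything, more careful than the original. One small caution: your claim that $\mathcal{O}$ \emph{must} be the full set of path systems is too restrictive, since the paper later invokes the lemma with $\mathcal{O}=\mathcal{O}^i_{vd}$ (where the first equality comes from Proposition \ref{prop:GV} rather than \eqref{eq:det}) and, in type $E_7$, with a proper sub-collection obtained after sign-cancellation, so the first equality should be read as a standing hypothesis furnished case-by-case rather than forced by \eqref{eq:det}.
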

\begin{proof}
This follows from the positivity of \eqref{eq:edgewts}.
\end{proof} 
Using Lemma \ref{lem:easy} we can show that $\rel$ has the strong Lefschetz property (for an appropriate choice of $W_\Theta$) for most types of $W$.
 
\begin{proposition}
\label{prop:altypeslefschetz}
Let $W$ be a finite reflection group of type $A$, $B$, $D$, $I_2(m)$ or $H_3$.  Then there is a parabolic subgroup $W_\Theta\subset W$ such that $S_W^{W_\Theta}$ has the strong Lefschetz property.
\end{proposition}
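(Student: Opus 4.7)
The plan is a type-by-type analysis: for each of $A_n$, $B_n$, $D_n$, $I_2(m)$, $H_3$, I will choose a specific (usually maximal) parabolic subgroup $W_\Theta \subset W$ and verify that the map \eqref{eq:lefmaps} is an isomorphism for every $0 \le i \le \lfloor r/2 \rfloor$. The central tool is Lemma \ref{lem:easy} together with Proposition \ref{prop:matentr}: since the edge weights $\omega(e) = \check\beta(\bar w'(\bar\rho))$ are strictly positive by Lemma \ref{sublem:relpos}, it suffices to exhibit, for each $i$, a nonempty collection $\mathcal{O}$ of path systems from $V^i$ to $V^{r-i}$ in the directed graph $G$ on $W^\Theta$ on which $\sgn$ is constant. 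Nonemptiness of at least one path system (and hence of many) is guaranteed by Deodhar's theorem cited in the remark after Proposition \ref{prop:matentr}, applied to appropriate comparable pairs $\bar u_a \le \bar v_{\sigma(a)}$, where such a matching exists because $W^\Theta$ is rank-symmetric (the map $\bar w \mapsto \overline{w_0 \bar w}$ gives a rank-reversing involution on $W^\Theta$).

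For types $A_n$, $B_n$, $D_n$, I would take $W_\Theta$ corresponding to deletion of an end node of the Dynkin diagram, so that $W^\Theta$ becomes a well-known combinatorial poset: in type $A$ it is the set of partitions contained in a $k \times (n{+}1{-}k)$ rectangle ordered by containment (the Bruhat order on a Grassmannian), and in types $B$, $D$ it is the analogous shifted/signed Young's lattice for a Lagrangian or orthogonal Grassmannian. In each case the Hasse diagram of $W^\Theta$ admits a planar drawing compatible with its rank function, so I would apply a Lindström--Gessel--Viennot style argument: take $\mathcal{O}$ to be the set of vertex-disjoint (hence non-crossing) path systems from $V^i$ to $V^{r-i}$, with $V^i$ and $V^{r-i}$ ordered consistently with the planar embedding. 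Planarity forces every system in $\mathcal{O}$ to realize the identity permutation, so $\sgn \equiv +1$ on $\mathcal{O}$. Taking the LGV sum over $\mathcal{O}$ then recovers $\det\bigl((c^i_{jk})\bigr)$ (all other, crossing, path systems cancel in pairs), and the positivity of the edge weights makes this determinant strictly positive.

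For type $I_2(m)$, choose $W_\Theta = \langle s_1 \rangle$, so that $W^\Theta$ is a chain of length $m$. Then each rank set $V^i$ has exactly one element, the matrix \eqref{eq:matrix} is $1 \times 1$, and its single entry is a sum of positive weights over the directed paths between the two elements of $W^\Theta$ of the appropriate ranks. Deodhar's theorem guarantees at least one such path exists, and positivity of the weights finishes the case. Type $H_3$ is the smallest and most \textit{ad hoc} case: with $|W| = 120$, I would pick $W_\Theta$ generated by two of the three simple reflections and verify directly (by enumerating $W^\Theta$ and its Bruhat Hasse diagram) that it too admits a planar drawing compatible with rank, then apply the LGV argument exactly as in the Grassmannian cases. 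Alternatively, one can exhibit, for each $i$, a bijection $V^i \to V^{r-i}$ of positive sign by explicit matching.

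The main obstacle is establishing the planarity of the Hasse diagram of $W^\Theta$ for each chosen parabolic quotient, since only in this situation does a non-crossing path system automatically correspond to the identity permutation and the LGV cancellation yields a sign-constant formula. For types $A$, $B$, $D$ this is essentially the classical Grassmannian (or Lagrangian/orthogonal Grassmannian) combinatorics and is documented in the literature; for $I_2(m)$ the poset is a chain and planarity is trivial; for $H_3$ it must be checked by hand. A secondary, bookkeeping obstacle is the identification of the correct ordering of $V^i$ and $V^{r-i}$ making the non-crossing systems match the identity permutation; this will be handled using the rank-reversing involution $\bar w \mapsto \overline{w_0 \bar w}$ on $W^\Theta$.
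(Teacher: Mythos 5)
Your choices in types $A$ and $I_2(m)$ are fine (with an end node in type $A$ the ``rectangle'' degenerates to a chain, which is exactly what is needed), but the plan has two genuine gaps in types $B$ and $D$ as you have set it up. First, the planarity claim on which your sign-constancy rests is false for the quotients you name: for $W=B_n$ with $W_\Theta$ of type $A_{n-1}$, the Bruhat order on $W^\Theta$ is the distributive lattice of shifted partitions inside the staircase $(n,n-1,\ldots,1)$, and a distributive lattice $J(P)$ has a planar Hasse diagram only when the poset $P$ of join-irreducibles has width at most $2$; here the cell poset contains the antichain $\{(1,n),(2,n-1),(3,n-2)\}$ as soon as $n\geq 5$, so the Hasse diagram is non-planar (similarly for the orthogonal/spinor quotient in type $D$). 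Thus ``vertex-disjoint implies identity permutation'' breaks down precisely in the range of ranks where the graded pieces have dimension $>1$ and you actually need it. Second, even granting sign-constancy, Proposition \ref{prop:GV} expresses the determinant as a sum over $\mathcal{O}^i_{vd}$ only, so you must exhibit at least one vertex-disjoint system: Deodhar's theorem produces a single path between each comparable pair $\bar u_a\leq\bar v_{\sigma(a)}$, but those paths may share vertices, and if $\mathcal{O}^i_{vd}$ were empty the Gessel--Viennot sum, hence the determinant, would be zero. Your proposal never addresses this nonemptiness.

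The paper avoids both problems by choosing the \emph{other} end of the diagram: $W_\Theta$ of type $A_{n-1}\subset A_n$, $B_{n-1}\subset B_n$, $D_{n-1}\subset D_n$, any nontrivial parabolic in $I_2(m)$, and $I_2(5)\subset H_3$ (the choice of pair matters in $H_3$: the $A_2$ or $A_1\times A_1$ pairs give quotients with $2$-dimensional ranks away from the middle). For these choices a Poincar\'e polynomial computation (\cite{Hil}, Corollary 4.5, together with the degrees in \cite{Hum}) shows $\dim_\R\left(S_W^{W_\Theta}\right)^i=1$ for all $0\leq i\leq\lfloor\frac{r-1}{2}\rfloor$, so every Lefschetz matrix \eqref{eq:matrix} with $i<\frac{r}{2}$ is $1\times 1$, while the map at $i=\frac{r}{2}$ (when $r$ is even, e.g.\ the $2$-dimensional middle rank of $D_n/D_{n-1}$ or $H_3/I_2(5)$) is $\bar{\rho}^0=\mathrm{id}$ and is never tested. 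The single matrix entry is then a sum of positive weights over single paths, nonempty by Deodhar, so Lemma \ref{lem:easy} applied to \emph{all} path systems finishes the proof with no appeal to Proposition \ref{prop:GV} whatsoever. To salvage your version, simply replace your type $B$/$D$ parabolics by $B_{n-1}$ and $D_{n-1}$; otherwise you would have to prove sign-constancy and nonemptiness for the (non-planar) Lagrangian and spinor quotients by other means, essentially redoing Proctor-type determinant evaluations.
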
  
\begin{proof}
For $W$ of type $A_{n}$, $B_n$, or $D_n$, choose a parabolic subgroup $W_\Theta\subset W$ of type $A_{n-1}$, $B_{n-1}$, or $D_{n-1}$, respectively.  For type $I_2(m)$ any non-trivial parabolic subgroup will work.  In type $H_3$ choose a parabolic subgroup $W_\Theta\subset W$ of type $I_2(5)$.  In all these cases a straightforward computation of the (relative) Poincar\'e polynomial, using Corollary 4.5 in \cite{Hil} and Table 1 in \cite{Hum}, reveals that 
$$\dim_\R\left(\rel\right)^i=1, \ \ \ \ \ \ \text{for all} \ \ 0\leq i\leq \Big\lfloor\frac{r-1}{2}\Big\rfloor.$$
By Lemma \ref{lem:easy} (taking $\mathcal{O}$ to be the set of all path systems from $V^i$ to $V^{r-i}$) we deduce that \ref{eq:det} is non-zero, and thus that $\rel$ has the strong Lefschetz property.
\end{proof}

To show that $S_W^{W_\Theta}$ has the strong Lefschetz property for the remaining types takes a little more work.  Our main tool is the following result of Gessel and Viennot \cite{GV}; we state it here without proof.  For an excellent treatment and further references on this result, see \cite{AZ}, Chapter 23.
\begin{proposition}
\label{prop:GV}
Let $\mathcal{O}^i_{vd}$ denote the set of \emph{vertex disjoint} path systems from $V^i$ to $V^{r-i}$ i.e.
no two distinct paths $P_a$ and $P_b$ in $\mathcal{P}$ share a common vertex.  Then
\begin{equation}
\label{eq:GV}
\det\left(\left(c^i_{jk}\right)\right)= \sum_{\mathcal{P}\in\mathcal{O}^i_{vd}}\sgn(\mathcal{P})\cdot\omega(\mathcal{P}).
\end{equation}
\end{proposition}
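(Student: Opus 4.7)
The plan is to cite the classical Lindström--Gessel--Viennot sign-reversing involution, tailored to this weighted acyclic setting. Starting from the full determinantal expansion already recorded as equation \eqref{eq:det}, namely
\[
\det\bigl(\bigl(c^i_{jk}\bigr)\bigr)=\sum_{\mathcal{P}} \sgn(\mathcal{P})\cdot\omega(\mathcal{P}),
\]
where the sum is over \emph{all} path systems from $V^i$ to $V^{r-i}$, I would construct an involution $\Psi$ on the subset of \emph{non}-vertex-disjoint path systems that preserves weight and reverses sign. The contributions of such path systems will then cancel in pairs, leaving only the vertex-disjoint ones, which is precisely the identity \eqref{eq:GV}.

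To define $\Psi$, fix once and for all a total ordering on the elements of $V^i$ and on the vertices of $G$. Given a non-vertex-disjoint path system $\mathcal{P}=(P_1,\ldots,P_N)$ corresponding to a permutation $\sigma\in S_N$, let $a$ be the smallest index for which $P_a$ shares a vertex with some other $P_b$ ($b\neq a$), let $x$ be the first vertex along $P_a$ at which such a collision occurs, let $b>a$ be the smallest index of a path passing through $x$, and then swap the tails of $P_a$ and $P_b$ beyond $x$. This yields a new path system $\Psi(\mathcal{P})=(P'_1,\ldots,P'_N)$: path $P'_a$ follows $P_a$ up to $x$ and then continues along the old $P_b$, and symmetrically for $P'_b$. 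The defining quantities $(a,x,b)$ are unchanged by $\Psi$, so $\Psi^2=\mathrm{id}$.

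Two properties must be checked. First, weight preservation: since the multiset of edges used by the two paths past $x$ is the same before and after (the two tails have simply been exchanged), and $\omega(\mathcal{P})$ is defined as a product of edge weights, one gets $\omega(\Psi(\mathcal{P}))=\omega(\mathcal{P})$. Second, sign reversal: the new path system corresponds to the permutation $\sigma\circ(a\,b)$, so $\sgn(\Psi(\mathcal{P}))=-\sgn(\mathcal{P})$. Here it is crucial that $G$ is acyclic (noted after \eqref{eq:edgewts}), so that the swapped object is still a valid path system with each path starting in $V^i$ and ending in $V^{r-i}$, and that the indices $a,b$ are distinct so the transposition $(a\,b)$ is nontrivial.

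The only real subtlety is verifying that $\Psi$ is well defined as an involution, i.e.\ that the deterministic choice of $(a,x,b)$ from $\mathcal{P}$ produces the same triple when applied to $\Psi(\mathcal{P})$. This follows because $\Psi$ does not alter any segment of $\mathcal{P}$ strictly before the vertex $x$ along any path, so the ``first offending index'' $a$, the first collision vertex $x$ on $P_a$, and the minimal partner index $b$ are all read off from initial segments which are identical in $\mathcal{P}$ and $\Psi(\mathcal{P})$. Granting this, the involution partitions the non-vertex-disjoint path systems into sign-opposite weight-equal pairs, their contributions to \eqref{eq:det} vanish, and \eqref{eq:GV} follows. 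I expect this bookkeeping (ensuring the triple $(a,x,b)$ is genuinely $\Psi$-invariant under the chosen tie-breaking rules) to be the main thing to pin down carefully; the weight and sign computations are routine once the combinatorial framework is set up.
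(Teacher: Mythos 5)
Your proposal is correct: the paper states Proposition \ref{prop:GV} without proof, deferring to Gessel--Viennot \cite{GV} and to \cite{AZ} Chapter 23, and your tail-swapping, sign-reversing involution is precisely the standard argument presented in those references. You also handle the one genuinely delicate point correctly, namely the $\Psi$-invariance of the triple $(a,x,b)$, where acyclicity of $G$ guarantees that paths are self-avoiding, so no vertex of $P_a$ strictly before $x$ can reappear in the swapped tail and the minimal collision data is read off identical initial segments in $\mathcal{P}$ and $\Psi(\mathcal{P})$.
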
  
Proposition \ref{prop:GV} makes short work of our task in few more types.
\begin{proposition}
\label{prop:FEE}
If $W$ is a finite reflection group of type $F_4$, $E_6$ or $E_7$, then there is a parabolic subgroup $W_\Theta$ such that $S_W^{W_\Theta}$ has the strong Lefschetz property.
\end{proposition}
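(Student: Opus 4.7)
The plan is to follow the template of Proposition \ref{prop:altypeslefschetz}, but since the relevant rank-dimensions $\dim_\R(S_W^{W_\Theta})^i$ now exceed $1$, the na\"ive Lemma \ref{lem:easy} is insufficient: instead I will invoke the Gessel-Viennot formula of Proposition \ref{prop:GV}. For each of the three types, choose $\Theta \subset \Delta$ by deleting one end node of the Dynkin diagram so that $W_\Theta$ is of type $B_3$ (for $F_4$), of type $D_5$ (for $E_6$), and of type $E_6$ (for $E_7$). For $E_6$ and $E_7$ these are the minuscule choices, and $|W^\Theta|$ equals $27$ and $56$ respectively; for $F_4$ we have $|W^\Theta| = 24$. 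In each case the first step is to write down explicitly, from the classical Schubert-calculus literature, the Hasse diagram of the Bruhat order restricted to $W^\Theta$.

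With this data in hand, Proposition \ref{prop:GV} yields, for each level $0 \le i \le \lfloor r/2 \rfloor$,
\[
\det\bigl((c^i_{jk})\bigr) \;=\; \sum_{\mathcal{P} \in \mathcal{O}^i_{vd}} \sgn(\mathcal{P}) \cdot \omega(\mathcal{P}).
\]
Since every edge weight $\omega(e) = \check\beta(\bar w'(\bar\rho))$ is strictly positive by Lemma \ref{sublem:relpos}, it is enough to produce at least one vertex-disjoint path system and to show that the sign of \emph{every} vertex-disjoint path system in $\mathcal{O}^i_{vd}$ is the same; Lemma \ref{lem:easy} will then give non-vanishing of the determinant. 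The strategy is to prove that the bijection $V^i \to V^{r-i}$ underlying any vertex-disjoint family must coincide with a single canonical order-reversing involution on $W^\Theta$, namely the one induced by Poincar\'e duality in $S_W^{W_\Theta}$; the sign then depends only on this involution and not on the particular path system.

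I expect to establish this sign-rigidity by an inductive nearest-neighbor argument proceeding rank by rank: among a vertex-disjoint family of paths in the Hasse diagram on $W^\Theta$, the assignment sending each source vertex to the vertex it reaches after its first edge must be an injection $V^i \hookrightarrow V^{i+1}$, and chaining such injections up to the middle rank and back forces the underlying bijection to equal the involution above. The main obstacle is verifying this rigidity in each of the three exceptional types. For $E_6$ and $E_7$ it should be tractable because of the well-understood combinatorial structure of the minuscule quotients (in particular their distributive-lattice descriptions), where Lindstr\"om-type routing arguments control which bijections are realizable. The $F_4$ case, which is not minuscule, will require a direct inspection of the $24$-element Hasse diagram, possibly with a small assist from a computer. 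In each of the three types, sign constancy combined with positivity of weights yields $\det((c^i_{jk})) \ne 0$, and hence $(S_W^{W_\Theta}, \bar\rho)$ is a Lefschetz algebra.
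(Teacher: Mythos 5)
Your setup matches the paper's: the same parabolic choices ($B_3\subset F_4$, $D_5\subset E_6$, $E_6\subset E_7$), the same weighted Hasse diagrams on $W^\Theta$, and the same appeal to the Gessel--Viennot identity \eqref{eq:GV}. But your central claim --- that the bijection underlying every vertex-disjoint path system is forced to equal one canonical order-reversing involution, so that $\sgn$ is constant on $\mathcal{O}^i_{vd}$ --- is false in two of the three types, and precisely in the middle degrees where the argument matters most. In type $F_4$, for $4\leq i\leq 7$ there are exactly two vertex-disjoint path systems; they differ by a transposition at the levels $V^7\sqcup V^8$ and therefore carry \emph{opposite} signs. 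No routing-rigidity argument can restore sign constancy, because both underlying bijections are genuinely realizable. The determinant is nevertheless non-zero, but for a reason your proposal never supplies: the crossing in the middle of the $F_4\big/B_3$ diagram has unequal edge weights ($11$ and $22$; see Table \ref{tab:F4E6E7}), so the two signed terms have distinct absolute values and cannot cancel. A literal execution of your plan --- checking whether all vertex-disjoint systems share a sign --- would return a negative answer and stall exactly here.

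The same failure occurs in type $E_7$ for $5\leq i\leq 8$, and there the $F_4$ fix is unavailable: one computes $\bar{\rho}=9\left(e_8-e_7+2e_6\right)$ and $\check{\alpha}(\bar{\rho})=18$ for every $\alpha\in\Phi^+\setminus\Phi^+_\Theta$, so \emph{all} edge weights are equal and \eqref{eq:GV} degenerates to a signed count of vertex-disjoint systems. The paper's proof handles this with a cancellation argument you are missing: the sign of a path system is determined by its ``middle leg'' (its restriction to $V^{12}\sqcup V^{13}\sqcup V^{14}\sqcup V^{15}$), the middle legs paired in Table \ref{tab:E7} match up opposite-sign systems bijectively so that they cancel, and the surviving systems all share one sign, whence Lemma \ref{lem:easy} applies to the residual sum. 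Only in type $E_6$ does your sign-constancy claim actually hold (all vertex-disjoint systems in $E_6\big/D_5$ have the same sign for $0\leq i\leq 8$), and there your minuscule/distributive-lattice heuristic is a plausible route to verifying it. So the gap is concrete: beyond Gessel--Viennot plus positivity of the weights \eqref{eq:edgewts}, you need either a distinct-weights argument (type $F_4$) or a cancellation-plus-residual-sign-constancy argument (type $E_7$); sign rigidity alone cannot carry the middle degrees, because it is simply false there.
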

\begin{proof}
The proof is a type-by-type inspection.

In type $F_4$, choose a parabolic subgroup $W_\Theta\subset W$ of type $B_3$.  The graph for the quotient $F_4\big/B_3$ is shown in Table \ref{tab:F4E6E7}.
Note that there is only one path system from $V^i$ to $V^{15-i}$ for $0\leq i\leq 3$.  Hence by Lemma \ref{lem:easy}, \eqref{eq:GV} is non-zero for $0\leq i\leq 3$.  For $4\leq i\leq 7$, note that there are exactly two path systems in $\mathcal{O}^i_{vd}$ which differ only in their restriction to $V^7\sqcup V^8$.  Since the path systems in $\mathcal{O}^7_{vd}$ have distinct weights, as shown in Table \ref{tab:F4E6E7}, \eqref{eq:GV} must be non-zero for $4\leq i\leq 7$.    

In type $E_6$, choose a parabolic subgroup $W_\Theta\subset W$ of type $D_5$.  The graph for the quotient $E_6\big/D_5$ is shown in Table \ref{tab:F4E6E7} (this graph was borrowed from \cite{BB} page 44).  Note that the path systems in $\mathcal{O}^i_{vd}$ have the same sign for \emph{all} $0\leq i\leq 8$.  Thus Lemma \ref{lem:easy} implies that \eqref{eq:GV} is non-zero for all $0\leq i\leq 8$.
\begin{center}
\begin{table}[t]
\begin{tabular}{|c|c|c|c|c|p{1in}|}
\hline
$\xyPoset{
\\
\\
\\
\\
\\
\\
\\
\\
\\
\\
\\
\\
&&*=0{\bullet}\ar@{-}[d]&&\\
&&*=0{\bullet}\ar@{-}[d]&&\\
&&*=0{\bullet}\ar@{-}[d]&&\\
&&*=0{\bullet}\ar@{-}[ld]\ar@{-}[rd]&&\\
&*=0{\bullet}\ar@{-}[rd]&&*=0{\bullet}\ar@{-}[ld]\ar@{-}[rd]&\\ &&*=0{\bullet}\ar@{-}[rd]\ar@{-}[ld]&&*=0{\bullet}\ar@{-}[ld]\\
&*=0{\bullet}\ar@{-}[ld]\ar@{-}[rd]&&*=0{\bullet}\ar@{-}[ld]&\\
*=0{\bullet}\ar@{-}[rrd]|<<<{22}\ar@{-}[d]_{11}&&*=0{\bullet}\ar@{-}[d]^{11}\ar@{-}[lld]|<<<{11}&&\\
*=0{\bullet}\ar@{-}[rd]&&*=0{\bullet}\ar@{-}[ld]\ar@{-}[rd]&&\\
&*=0{\bullet}\ar@{-}[rd]&&*=0{\bullet}\ar@{-}[ld]\ar@{-}[rd]&\\
&&*=0{\bullet}\ar@{-}[ld]\ar@{-}[rd]&&*=0{\bullet}\ar@{-}[ld]\\
&*=0{\bullet}\ar@{-}[rd]&&*=0{\bullet}\ar@{-}[ld]&\\
&&*=0{\bullet}\ar@{-}[d]&&\\
&&*=0{\bullet}\ar@{-}[d]&&\\
&&*=0{\bullet}\ar@{-}[d]&&\\
&&*=0{\bullet}&&\\}$
&
{\tiny$\xymatrixrowsep{.23pc}\xymatrix{
27\\
26\\
25\\
24\\
23\\
22\\
21\\
20\\
19\\
18\\
17\\
16\\
15\\
14\\
13\\
12\\
11\\
10\\
9\\
8\\
7\\
6\\
5\\
4\\
3\\
2\\
1\\
0\\}$}
&
$\xyPoset{
\\
\\
\\
\\
\\
\\
\\
\\
\\
\\
\\
&&&*=0{\bullet}\ar@{-}[d]&&\\
&&&*=0{\bullet}\ar@{-}[d]&&\\
&&&*=0{\bullet}\ar@{-}[d]&&\\
&&&*=0{\bullet}\ar@{-}[ld]\ar@{-}[rd]&&\\
&&*=0{\bullet}\ar@{-}[rd]&&*=0{\bullet}\ar@{-}[ld]\ar@{-}[rd]&\\ &&&*=0{\bullet}\ar@{-}[rd]\ar@{-}[ld]&&*=0{\bullet}\ar@{-}[ld]\\
&&*=0{\bullet}\ar@{-}[ld]\ar@{-}[rd]&&*=0{\bullet}\ar@{-}[ld]&\\
&*=0{\bullet}\ar@{-}[rd]\ar@{-}[ld]&&*=0{\bullet}\ar@{-}[rd]\ar@{-}[ld]&&\\
*=0{\bullet}\ar@{-}[rd]&&*=0{\bullet}\ar@{-}[ld]\ar@{-}[rd]&&*=0{\bullet}\ar@{-}[ld]&\\
&*=0{\bullet}\ar@{-}[rd]&&*=0{\bullet}\ar@{-}[ld]\ar@{-}[rd]&&\\
&&*=0{\bullet}\ar@{-}[rd]&&*=0{\bullet}\ar@{-}[ld]\ar@{-}[rd]&\\
&&&*=0{\bullet}\ar@{-}[rd]\ar@{-}[ld]&&*=0{\bullet}\ar@{-}[ld]\\
&&*=0{\bullet}\ar@{-}[rd]&&*=0{\bullet}\ar@{-}[ld]&\\
&&&*=0{\bullet}\ar@{-}[d]&&\\
&&&*=0{\bullet}\ar@{-}[d]&&\\
&&&*=0{\bullet}\ar@{-}[d]&&\\
&&&*=0{\bullet}&&\\}$
&
{\tiny$\xymatrixrowsep{.23pc}\xymatrix{
27\\
26\\
25\\
24\\
23\\
22\\
21\\
20\\
19\\
18\\
17\\
16\\
15\\
14\\
13\\
12\\
11\\
10\\
9\\
8\\
7\\
6\\
5\\
4\\
3\\
2\\
1\\
0\\}$}
&
$\xyPoset{&&&&*=0{\bullet}\ar@{-}[d]&&&&\\
&&&&*=0{\bullet}\ar@{-}[d]&&&&\\
&&&&*=0{\bullet}\ar@{-}[d]&&&&\\
&&&&*=0{\bullet}\ar@{-}[d]&&&&\\
&&&&*=0{\bullet}\ar@{-}[ld]\ar@{-}[rd]&&&&\\
&&&*=0{\bullet}\ar@{-}[rd]\ar@{-}[ld]&&*=0{\bullet}\ar@{-}[ld]&&&\\ &&*=0{\bullet}\ar@{-}[rd]&&*=0{\bullet}\ar@{-}[ld]\ar@{-}[rd]&&&&\\
&&&*=0{\bullet}\ar@{-}[rd]&&*=0{\bullet}\ar@{-}[ld]\ar@{-}[rd]&&&\\
&&&&*=0{\bullet}\ar@{-}[ld]\ar@{-}[rd]&&*=0{\bullet}\ar@{-}[ld]\ar@{-}[rd]&&\\
&&&*=0{\bullet}\ar@{-}[rd]&&*=0{\bullet}\ar@{-}[rd]\ar@{-}[ld]&&*=0{\bullet}\ar@{-}[rd]\ar@{-}[ld]&\\
&&&&*=0{\bullet}\ar@{-}[rd]\ar@{-}[ld]&&*=0{\bullet}\ar@{-}[rd]\ar@{-}[ld]&&*=0{\bullet}\ar@{-}[ld]\\
&&&*=0{\bullet}\ar@{-}[rd]\ar@{-}[ld]&&*=0{\bullet}\ar@{-}[rd]\ar@{-}[ld]&&*=0{\bullet}\ar@{-}[ld]&\\
&&*=0{\bullet}\ar@{-}[rd]&&*=0{\bullet}\ar@{-}[rd]\ar@{-}[ld]\ar@{-}[d]&&*=0{\bullet}\ar@{-}[ld]&&\\
&&&*=0{\bullet}\ar@{-}[rd]\ar@{-}[d]&*=0{\bullet}\ar@{-}[rd]\ar@{-}[ld]&*=0{\bullet}\ar@{-}[ld]\ar@{-}[d]&&&\\
&&&*=0{\bullet}\ar@{-}[rd]\ar@{-}[ld]&*=0{\bullet}\ar@{-}[d]&*=0{\bullet}\ar@{-}[ld]\ar@{-}[rd]&&&\\
&&*=0{\bullet}\ar@{-}[rd]\ar@{-}[ld]&&*=0{\bullet}\ar@{-}[rd]\ar@{-}[ld]&&*=0{\bullet}\ar@{-}[ld]&&\\
&*=0{\bullet}\ar@{-}[rd]\ar@{-}[ld]&&*=0{\bullet}\ar@{-}[rd]\ar@{-}[ld]&&*=0{\bullet}\ar@{-}[ld]&&&\\
*=0{\bullet}\ar@{-}[rd]&&*=0{\bullet}\ar@{-}[rd]\ar@{-}[ld]&&*=0{\bullet}\ar@{-}[ld]\ar@{-}[rd]&&&&\\
&*=0{\bullet}\ar@{-}[rd]&&*=0{\bullet}\ar@{-}[rd]\ar@{-}[ld]&&*=0{\bullet}\ar@{-}[ld]&&\\
&&*=0{\bullet}\ar@{-}[rd]&&*=0{\bullet}\ar@{-}[rd]\ar@{-}[ld]&&&&\\
&&&*=0{\bullet}\ar@{-}[rd]&&*=0{\bullet}\ar@{-}[rd]\ar@{-}[ld]&&&\\
&&&&*=0{\bullet}\ar@{-}[ld]\ar@{-}[rd]&&*=0{\bullet}\ar@{-}[ld]&&\\
&&&*=0{\bullet}\ar@{-}[rd]&&*=0{\bullet}\ar@{-}[ld]&&&\\
&&&&*=0{\bullet}\ar@{-}[d]&&&&\\
&&&&*=0{\bullet}\ar@{-}[d]&&&&\\
&&&&*=0{\bullet}\ar@{-}[d]&&&&\\
&&&&*=0{\bullet}\ar@{-}[d]&&&&\\
&&&&*=0{\bullet}&&&&\\}$\\
\hline
$F_4\big/B_3$ & $i$ & $E_6\big/D_5$ & $i$ & $E_7\big/E_8$\\
\hline
\end{tabular}
\caption{Hasse diagrams of the quotients $W\big/W_\Theta$}
\label{tab:F4E6E7}
\end{table}
\end{center}

In type $E_7$, choose a parabolic subgroup $W_\Theta\subset W$ of type $E_6$.  The graph for the quotient $E_7\big/ E_6$ is shown in Table \ref{tab:F4E6E7} (this graph was borrowed from \cite{P}).  For $0\leq i\leq 4$ and $9\leq i\leq 13$, the path systems in $\mathcal{O}^i_{vd}$ have the same sign, hence Lemma \ref{lem:easy} implies \eqref{eq:GV} is non-zero for these $i$.  For $5\leq i\leq 8$ we can reduce the computations to a simple count.  Note that all of the edge weights of $G$ are equal.  Indeed using the computations of the root system $\Phi$ of type $E_7$ described in \cite{Hum} page 43, we compute that 
$$\bar{\rho}=9\cdot\left(e_8-e_7+2e_6\right)$$
where $\left\{e_1,\ldots,e_8\right\}$ are the standard basis vectors in $\R^8$.
Then it is straightforward to check that
$$\check{\alpha}(\bar{\rho})=18$$
for all $\alpha\in\Phi^+\setminus\Phi_\Theta^+$.  Hence we need only show that the sign function does not split $\mathcal{O}^i_{vd}$ in half for $5\leq i\leq 8$.  Note that the sign of a path system $\mathcal{P}\in\mathcal{O}^i_{vd}$ is completely determined by its \emph{middle leg} i.e., the restriction of $\mathcal{P}$ to the vertex set $V^{12}\sqcup V^{13}\sqcup V^{14}\sqcup V^{15}$. 

By inspection of Table \ref{tab:F4E6E7}, there are a total of nine possible middle legs for $\mathcal{P}$ four of which are shown in Table \ref{tab:E7}.  Note that the middle legs in the same column in Table \ref{tab:E7} must have the same sign, and those in distinct columns must have distinct signs.  Also note that any path system $\mathcal{P}\in\mathcal{O}^i_{vd}$ with middle leg in column $-$ corresponds to a unique path system $\mathcal{P}'\in\mathcal{O}^i_{vd}$ with middle leg in column $+$.  
\begin{center}
\begin{table}[h]
\begin{tabular}{|c|c|p{1in}|}
\hline
- & +\\
\hline
$\xymatrixrowsep{.5pc}\xymatrixcolsep{1pc}\xymatrix{
&&*=0{\bullet}\ar@{-}[rd]&&*=0{\bullet}\ar@{.}[rd]\ar@{.}[ld]\ar@{-}[d]&&*=0{\circ}\ar@{.}[ld]&&\\
&&&*=0{\bullet}\ar@{-}[rd]\ar@{.}[d]&*=0{\bullet}\ar@{.}[rd]\ar@{-}[ld]&*=0{\circ}\ar@{.}[ld]\ar@{.}[d]&&&\\
&&&*=0{\bullet}\ar@{.}[rd]\ar@{-}[ld]&*=0{\bullet}\ar@{-}[d]&*=0{\circ}\ar@{.}[ld]\ar@{.}[rd]&&&\\
&&*=0{\bullet}&&*=0{\bullet}&&*=0{\circ}&&\\}$ &
$\xymatrixrowsep{.5pc}\xymatrixcolsep{1pc}\xymatrix{
&&*=0{\bullet}\ar@{-}[rd]&&*=0{\bullet}\ar@{-}[rd]\ar@{.}[ld]\ar@{.}[d]&&*=0{\circ}\ar@{.}[ld]&&\\
&&&*=0{\bullet}\ar@{.}[rd]\ar@{-}[d]&*=0{\circ}\ar@{.}[rd]\ar@{.}[ld]&*=0{\bullet}\ar@{-}[ld]\ar@{.}[d]&&&\\
&&&*=0{\bullet}\ar@{.}[rd]\ar@{-}[ld]&*=0{\bullet}\ar@{-}[d]&*=0{\circ}\ar@{.}[ld]\ar@{.}[rd]&&&\\
&&*=0{\bullet}&&*=0{\bullet}&&*=0{\circ}&&\\}$\\
\hline
$\xymatrixrowsep{.5pc}\xymatrixcolsep{1pc}\xymatrix{
&&*=0{\circ}\ar@{.}[rd]&&*=0{\bullet}\ar@{.}[rd]\ar@{.}[ld]\ar@{-}[d]&&*=0{\bullet}\ar@{-}[ld]&&\\
&&&*=0{\circ}\ar@{.}[rd]\ar@{.}[d]&*=0{\bullet}\ar@{-}[rd]\ar@{.}[ld]&*=0{\bullet}\ar@{-}[ld]\ar@{.}[d]&&&\\
&&&*=0{\circ}\ar@{.}[rd]\ar@{.}[ld]&*=0{\bullet}\ar@{-}[d]&*=0{\bullet}\ar@{.}[ld]\ar@{-}[rd]&&&\\
&&*=0{\circ}&&*=0{\bullet}&&*=0{\bullet}&&\\}$ &
$\xymatrixrowsep{.5pc}\xymatrixcolsep{1pc}\xymatrix{
&&*=0{\circ}\ar@{.}[rd]&&*=0{\bullet}\ar@{.}[rd]\ar@{-}[ld]\ar@{.}[d]&&*=0{\bullet}\ar@{-}[ld]&&\\
&&&*=0{\bullet}\ar@{-}[rd]\ar@{.}[d]&*=0{\circ}\ar@{.}[rd]\ar@{.}[ld]&*=0{\bullet}\ar@{.}[ld]\ar@{-}[d]&&&\\
&&&*=0{\circ}\ar@{.}[rd]\ar@{.}[ld]&*=0{\bullet}\ar@{-}[d]&*=0{\bullet}\ar@{.}[ld]\ar@{-}[rd]&&&\\
&&*=0{\circ}&&*=0{\bullet}&&*=0{\bullet}&&\\}$\\
\hline
\end{tabular}
\caption{middle legs of a path system}
\label{tab:E7}
\end{table}
\end{center}

Therefore we can write \eqref{eq:GV} as
$$\sum_{\mathcal{P}\in\mathcal{O}}\sgn(\mathcal{P})\omega(\mathcal{P})$$
where $\mathcal{O}$ is the subset of path systems in $\mathcal{O}^i_{vd}$ whose middle legs do \emph{not} appear in Table \ref{tab:E7}.  It is straightforward to check that the path sytems in $\mathcal{O}$ all have the same sign.  Therefore Lemma \ref{lem:easy} implies that \eqref{eq:GV} is non-zero for $5\leq i\leq 8$.  This completes the proof of Proposition \ref{prop:FEE}.
\end{proof}

To deal with the remaining types, we appeal to the symmetry of $G$.
There is an \emph{antiautomorphism} of the graph $G$ defined by 
\begin{equation}
\label{eq:symmetric2}
\alpha\colon x\mapsto w_0\cdot x\cdot w_0(\Theta)
\end{equation} 
where $w_0$ is the longest word in $W$ and $w_0(\Theta)$ is the longest word in $W_\Theta$; see \cite{BB} Proposition 2.5.4.  The map $\alpha$ induces a linear identification (which we denote by the same name) 
\begin{equation}
\label{eq:transpose}
\xymatrixcolsep{4pc}\xymatrixrowsep{1pc}\xymatrix{\left(\rel\right)^i\ar[r]^-{\alpha} & \left(\left(\rel\right)^{r-i}\right)^*\\
X_u\ar@{|->}[r] & \bar{A}_{\alpha(u)}\\}
\end{equation}  
where $\bar{A}_{\alpha(u)}$ denotes the BGG-operator restricted to $\left(\rel\right)^{r-i}$.
 
Let $\chi\in\rel$ be any homogeneous element of degree $1$, regarded as an operator of degree $1$ on $\rel$ given by multiplication by $\chi$.  Let $\chi^*$ denote the adjoint operator of degree $(-1)$.
\begin{proposition}
\label{prop:symmetric}
For each $0\leq i\leq r$ the diagram 
\begin{equation}
\label{eq:symmetric}
\xymatrixcolsep{5pc}\xymatrixrowsep{4pc}\xymatrix{\left(\rel\right)^i\ar[r]^-{\chi}\ar[d]_-{\alpha} & \left(\rel\right)^{i+1}\ar[d]^-{\alpha}\\
\left(\left(\rel\right)^{r-i}\right)^*\ar[r]_-{\chi^*} & \left(\left(\rel\right)^{r-i-1}\right)^*\\}
\end{equation}
commutes.
\end{proposition}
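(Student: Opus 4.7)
The plan is to verify commutativity of \eqref{eq:symmetric} by direct computation on the Schubert basis, exploiting the BGG commutator formula. By linearity it suffices to check, for each pair $\bar u,\bar v\in W^\Theta$ with $\ell(\bar u)=i$ and $\ell(\bar v)=r-i-1$, that
\[
\alpha(\chi\cdot X_{\bar u})(X_{\bar v})=\chi^{*}\alpha(X_{\bar u})(X_{\bar v}).
\]
By the definitions of $\chi^{*}$ and $\alpha$, the right hand side equals $\bar A_{\alpha(\bar u)}(\chi\cdot X_{\bar v})$. For the left hand side, expand $\chi\cdot X_{\bar u}=\sum_{\bar w}\bar A_{\bar w}(\chi\cdot X_{\bar u})\,X_{\bar w}$ over $\bar w\in W^\Theta$ with $\ell(\bar w)=i+1$, apply $\alpha$, and evaluate at $X_{\bar v}$; since $\alpha$ is an involution on $W^\Theta$ and $\bar A_{\alpha(\bar w)}(X_{\bar v})=\delta_{\alpha(\bar w),\bar v}$, this collapses to $\bar A_{\alpha(\bar v)}(\chi\cdot X_{\bar u})$. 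Thus the claim reduces to the symmetric identity
\[
\bar A_{\alpha(\bar v)}(\chi\cdot X_{\bar u})=\bar A_{\alpha(\bar u)}(\chi\cdot X_{\bar v}).
\]

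Next I would apply Corollary \ref{cor:corcorbgg} (with $s=1$) to each side. Combined with the dual basis relations $\bar A_{u_1}(X_{\bar u})=\delta_{u_1,\bar u}$ in degree $i$, the sum on each side collapses to at most a single term: the left side equals $\check\beta(\bar u(\chi))$ when there is a cover $\xyto{\bar u\ar[r]^\beta & \alpha(\bar v)}$ in $G$ and is zero otherwise, while the right side equals $\check{\beta'}(\bar v(\chi))$ or zero, according to whether $\xyto{\bar v\ar[r]^{\beta'} & \alpha(\bar u)}$ is an edge of $G$. That both sides vanish simultaneously is immediate from $\alpha$ being an antiautomorphism of $G$: it identifies the edge $\bar u\to\alpha(\bar v)$ with the reversed edge $\bar v\to\alpha(\bar u)$.

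The substantive step is matching the two edge weights when both edges exist. From $s_\beta\bar u=w_0\bar v w_0(\Theta)$ and $s_{\beta'}\bar v=w_0\bar u w_0(\Theta)$ one deduces $s_{\beta'}=w_0 s_\beta w_0$, whence $\beta'=-w_0(\beta)$. Combining this with the $W_\Theta$-invariance of $\chi$ (so $w_0(\Theta)(\chi)=\chi$), the orthogonality of the $W$-action, the identity $\check{-\gamma}=-\check\gamma$, and the reflection formula $\check\beta(s_\beta y)=-\check\beta(y)$, a short computation reduces $\check{\beta'}(\bar v(\chi))$ to $\check\beta(\bar u(\chi))$. This final root-theoretic calculation is where the genuine work lies; the preceding reductions are formal consequences of the definitions and standard BGG calculus.
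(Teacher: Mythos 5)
Your proposal is correct and takes essentially the same route as the paper: both arguments reduce commutativity of \eqref{eq:symmetric} to the single matrix-entry identity $\bar A_{\alpha(\bar v)}\left(\chi\cdot X_{\bar u}\right)=\bar A_{\alpha(\bar u)}\left(\chi\cdot X_{\bar v}\right)$, evaluate each side by the $s=1$ case of Corollary \ref{cor:corcorbgg} so that at most one term (the unique covering edge) survives, and match the two edge weights via the antiautomorphism $\alpha$ and the relation $\beta'=-w_0(\beta)$. The only difference is one of explicitness: you write out the dual-basis collapse and the final root-theoretic computation (orthogonality, $W_\Theta$-invariance of $\chi$, and $\check{\beta}(s_\beta y)=-\check{\beta}(y)$) that the paper dismisses as ``straightforward to verify,'' and your details check out.
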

\begin{proof}
To see that \eqref{eq:symmetric} commutes it suffices to show for each $\ell(u)=i$ and $\ell(v)=i+1$ that 
\begin{equation}
\label{eq:symm2}
A_v\left(\chi\cdot X_u\right)=A_{\alpha(u)}\left(\chi\cdot X_{\alpha(v)}\right).
\end{equation}
The LHS of \eqref{eq:symm2} is given by 
$$A_v\left(\chi\cdot X_u\right)=\check{\beta}(u(\chi))$$
where $\xyto{u\ar[r]^-{\beta} & v\\}$, and the RHS is given by
$$A_{\alpha(u)}\left(\chi\cdot X_{\alpha(v)}\right)=\check{\gamma}(\alpha(v)(\chi))$$
where $\xyto{\alpha(v)\ar[r]^-{\gamma} & \alpha(u)\\}$.  It is straightforward to verify that $\gamma=-w_0(\beta)$ and hence that $\check{\gamma}(\alpha(v)(\chi))=\check{\beta}(u(\chi))$ as desired.
\end{proof}

\begin{remark*}
It turns out that the pairing of $\left(\rel\right)^i$ with $\left(\rel\right)^{r-i}$ given by $\alpha$ in \ref{eq:transpose} agrees with the ``intersection pairing" on $S_W$ given by multiplication.  In other words,
$$\left(A,B\right)\coloneqq\alpha(B)\left(A\right)=A\cdot B.$$
See \cite{Hil} Theorem 2.9 page 147.  
\end{remark*}

Proposition \ref{prop:symmetric} implies that the Lefschetz matrix \eqref{eq:matrix} is symmetric.  In fact we see that the matrix \eqref{eq:matrix} has the form $A^t\cdot B\cdot A$ as follows:  let $A$ be the matrix for the ``first leg" of the Lefschetz map $$\bar{\rho}^{\lfloor\frac{r}{2}\rfloor-i}\colon \left(\rel\right)^i\rightarrow\left(\rel\right)^{\lfloor\frac{r}{2}\rfloor}$$
and let $B$ be the matrix for the ``second leg" 
$$\begin{cases}
\bar{\rho}\colon \left(\rel\right)^{\lfloor\frac{r}{2}\rfloor}\rightarrow\left(\rel\right)^{\lceil\frac{r}{2}\rceil} & \text{if $r$ is odd}\\
\alpha\colon\left(\rel\right)^{\frac{r}{2}}\rightarrow\left(\rel\right)^{\frac{r}{2}} & \text{if $r$ is even}.\\
\end{cases}$$
Then Proposition \ref{prop:symmetric} implies that the matrix for the ``third leg" of the Lefschetz map
$$\bar{\rho}^{\lfloor\frac{r}{2}\rfloor-i}\colon \left(\rel\right)^{\lceil\frac{r}{2}\rceil}\rightarrow\left(\rel\right)^{r-i}$$
can be identified with $A^t$, the transpose of the ``first leg", using $\alpha$ to identify $\left(\rel\right)^{r-i}$ with $\left(\left(\rel\right)^i\right)^*\cong\left(\rel\right)^i$ for each $0\leq i\leq\lfloor\frac{r}{2}\rfloor$.

Now suppose the matrix $B$ above is positive definite, and that $\rel$ has the following \emph{weak} Lefschetz property:  the multiplication map
$\bar{\rho}\colon\left(\rel\right)^i\rightarrow\left(\rel\right)^{i+1}$ is injective for $0\leq i<\lfloor\frac{r}{2}\rfloor$.
Then clearly the matrix $A^t\cdot B\cdot A$ must also be positive definite.  This handy observation reduces our task of checking that $\rel$ has the strong Lefschetz property to checking that $\rel$ has the weak Lefschetz property, which is computationally much simpler to carry out.

\begin{lemma}
\label{prop:E8H4}
Given $W$ of type $E_8$ or $H_4$, there is a choice of parabolic subgroup $W_\Theta\subset W$ such that $\rel$ having the strong Lefschetz property is equivalent to $\rel$ having the weak Lefschetz property.
\end{lemma}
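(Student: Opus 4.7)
The plan is to exploit the factorization described immediately before the lemma statement. Recall that once a choice of $W_\Theta$ is fixed, the Lefschetz map $\bar{\rho}^{r-2i}\colon(\rel)^i\to(\rel)^{r-i}$ factors through the middle piece(s) as $A^t\cdot B\cdot A$, where $A$ is the matrix of the first leg and $B$ is the middle-leg matrix (either multiplication by $\bar{\rho}$ from the lower middle to the upper middle, if $r$ is odd, or the intersection pairing $\alpha$ on the middle, if $r$ is even). If $B$ can be shown to be positive definite, then weak Lefschetz (injectivity of $\bar{\rho}$ on the first half) forces $A$ to have full column rank, and hence $A^t B A$ is positive definite and invertible; this gives strong Lefschetz and proves the desired equivalence.

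For $W$ of type $E_8$ I would take $W_\Theta$ of type $E_7$, and for $W$ of type $H_4$ I would take $W_\Theta$ of type $H_3$. In both cases $|W^\Theta|$ is small ($240$ and $120$ respectively), and the rank $r = \ell(w_0)-\ell(w_0(\Theta))$ is odd ($57$ and $45$ respectively), so $B$ is the square matrix of the single multiplication $\bar{\rho}\colon(\rel)^{(r-1)/2}\to(\rel)^{(r+1)/2}$ between the two middle pieces, which have equal dimension by the Poincar\'e duality inherent in the pairing $\alpha$ of \eqref{eq:transpose}.

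The first concrete step is to compute the length generating function for $W^\Theta$ in each case by dividing the Poincar\'e polynomial of $W$ by that of $W_\Theta$ (using \cite{Hil} Corollary 4.5 and \cite{Hum} Table~1), so as to pin down the dimensions of the middle pieces and, more importantly, to get a picture of the middle layers of the Hasse diagram of the Bruhat order restricted to $W^\Theta$. The second step is to enumerate the middle-length elements explicitly and, using Proposition \ref{prop:matentr} combined with positivity of the edge weights (Lemma \ref{sublem:relpos} and \eqref{eq:edgewts}), to write down the entries $c^{(r-1)/2}_{\bar v\bar u}$ of $B$ as sums of products of positive numbers $\check\beta(\bar w(\bar\rho))$. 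The third step is to verify positive definiteness of $B$: using the symmetry from Proposition \ref{prop:symmetric} we know $B$ is symmetric once both middle pieces are identified via $\alpha$, so it suffices to exhibit a decomposition of $B$ as $D+N$ with $D$ positive diagonal (e.g.\ coming from the unique direct edge $\bar u \to \alpha(\bar u)$ across the middle, or from the pairing of $\bar u$ with itself when $\alpha$ fixes $\bar u$) dominating any off-diagonal contribution, or, if the middle layer turns out to be narrow enough, to check positive definiteness by direct inspection of the middle-leg subgraph as was done for $F_4$, $E_6$, $E_7$ in Proposition \ref{prop:FEE}.

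The main obstacle is step three: for $E_8/E_7$ and $H_4/H_3$ the middle pieces are not one-dimensional, so positive definiteness of $B$ is not automatic from Lemma~\ref{lem:easy}; one must actually analyze the weighted path structure between the two central layers. Once $B$ is positive definite the conclusion is immediate from the factorization $A^t B A$: injectivity of $A$ (the weak Lefschetz property) is then equivalent to invertibility of $A^t B A$ (the strong Lefschetz property), which is precisely the equivalence claimed.
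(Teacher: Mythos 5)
Your proposal takes essentially the same approach as the paper: the same parabolic choices ($E_7\subset E_8$, $H_3\subset H_4$), the same reduction via the $A^t\cdot B\cdot A$ factorization following Proposition \ref{prop:symmetric}, and positive definiteness of the middle-leg matrix $B$ computed as a weighted path matrix between the two middle layers. The only part you leave open --- actually verifying positive definiteness --- is exactly what the paper does by explicitly computing the $4\times 4$ and $8\times 8$ middle-degree matrices in Table \ref{tab:E8H4} (the $E_8$ one being similar to the Cartan matrix of $E_8$), so your plan is correct and matches the paper's proof.
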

\begin{proof}
For $W$ in type $E_8$ (resp.\ $H_4$) choose $W_\Theta\subset W$ a parabolic subgroup of type $E_7$ (resp.\ $H_3$).  Table \ref{tab:E8H4} below shows the Bruhat order on $W^\Theta$ in middle degrees (i.e. $28\sra 29$ (on top) in type $E_8$ and $22\sra 23$ (on top) in type $H_4$).  The unlabelled edges in the graphs in Table \ref{tab:E8H4} have weight $1$.  A straightforward computation shows that these matrices are positive definite, hence the result of Lemma \ref{prop:E8H4} follows from our discussion following Proposition \ref{prop:symmetric}.
\end{proof}

\begin{remark*}
We observe that the matrix in Table \ref{tab:E8H4} for type $E_8$ resembles the Cartan matrix for the root system of type $E_8$.  In fact one can show that it is \emph{similar} to the Cartan matrix; the change of basis just changes the sign of every other simple root i.e.,
\begin{align*}
\xymatrixcolsep{3.45pc}\xymatrix{&&-&&&&\\}\\
\xymatrixcolsep{3pc}\xymatrix{&&\circ&&&&\\
\circ\ar@{-}[r]&\circ\ar@{-}[r]&\circ\ar@{-}[r]\ar@{-}[u]& \circ\ar@{-}[r]&\circ\ar@{-}[r]&\circ\ar@{-}[r]&\circ}\\
\xymatrixcolsep{2.8pc}\xymatrix{
+ & - & + & - & + & - & +}.\\ 
\end{align*}
In particular this shows that the matrix in Table \ref{tab:E8H4} for type $E_8$ is positive definite.  We thank Tom Braden for pointing out this neat proof.
\end{remark*}

\begin{center}
\begin{table}[h]
\begin{tabular}{|c|c|c|}
\hline
$H_4$ & $\begin{array}{c}\xymatrixcolsep{3pc}\xymatrixrowsep{4pc}\xymatrix{
*=0{\bullet}\ar@{-}[d]|{2}\ar@{-}[rd] & *=0{\bullet}\ar@{-}[d]|{2}\ar@{-}[ld]\ar@{-}[rd] & *=0{\bullet}\ar@{-}[d]|{2}\ar@{-}[ld]\ar@{-}[rd]|<<<<<{2b} & *=0{\bullet}\ar@{-}[ld]|<<<<<{2b}\ar@{-}[d]|{2}\\
*=0{\bullet} & *=0{\bullet} & *=0{\bullet} & *=0{\bullet}\\}\\
\end{array}$ &
$\begin{array}{c}
\left(\begin{array}{cccc}
2 & 1 & 0 & 0\\
1 & 2 & 1 & 0\\
0 & 1 & 2 & 2b\\
0 & 0 & 2b & 2\\
\end{array}\right)\\
\Big[b=\frac{-1+\sqrt{5}}{4}\Big]\\
\end{array}$\\ 
\hline

$E_8$ & $\begin{array}{c}\xymatrixcolsep{3pc}\xymatrixrowsep{4pc}\xymatrix{*=0{\bullet}\ar@{-}[d]|{2}\ar@{-}[rrrrrrrd] & *=0{\bullet}\ar@{-}[d]|{2}\ar@{-}[rd]\ar@{-}[rrrd] & *=0{\bullet}\ar@{-}[ld]\ar@{-}[d]|{2}\ar@{-}[rrrrrd] & *=0{\bullet}\ar@{-}[d]|{2}\ar@{-}[rd] & *=0{\bullet}\ar@{-}[llld]\ar@{-}[ld]\ar@{-}[d]|{2}\ar@{-}[rd] & *=0{\bullet}\ar@{-}[ld]\ar@{-}[d]|{2}\ar@{-}[rd] & *=0{\bullet}\ar@{-}[d]|{2}\ar@{-}[ld] & *=0{\bullet}\ar@{-}[d]|{2}\ar@{-}[llllld]\ar@{-}[llllllld]\\
*=0{\bullet} & *=0{\bullet} & *=0{\bullet} & *=0{\bullet} & *=0{\bullet} & *=0{\bullet} & *=0{\bullet} & *=0{\bullet}\\}\\
\end{array}$ & 
$\left(\begin{array}{cccccccc}
2 & 0 & 0 & 0 & 0 & 0 & 0 & 1\\
0 & 2 & 1 & 0 & 1 & 0 & 0 & 0\\
0 & 1 & 2 & 0 & 0 & 0 & 0 & 1\\
0 & 0 & 0 & 2 & 1 & 0 & 0 & 0\\
0 & 1 & 0 & 1 & 2 & 1 & 0 & 0\\
0 & 0 & 0 & 0 & 1 & 2 & 1 & 0\\
0 & 0 & 0 & 0 & 0 & 1 & 2 & 0\\
1 & 0 & 1 & 0 & 0 & 0 & 0 & 2\\
\end{array}\right)$\\

\hline
\end{tabular}
\caption{$H_4$ and $E_8$ in middle degrees}
\label{tab:E8H4} 
\end{table}
\end{center}

\begin{proposition}
\label{prop:H4E8lef}
For $W$ of type $H_4$ or $E_8$, there exists a parabolic subgroup $W_\Theta\subset W$ such that $\rel$ has the strong Lefschetz property.
\end{proposition}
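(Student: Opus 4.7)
The plan is to invoke Lemma \ref{prop:E8H4}, which reduces the claim to verifying the \emph{weak} Lefschetz property for $\rel$: namely, that multiplication by $\bar\rho$ is injective from $(\rel)^i$ to $(\rel)^{i+1}$ for all $0 \le i < \lfloor r/2 \rfloor$. The logic behind this reduction, already spelled out in the discussion following Proposition \ref{prop:symmetric}, is that the Lefschetz matrix factors as $A^t B A$, where $A$ is the composite of the weak Lefschetz steps up to the middle and $B$ is the middle map; since Lemma \ref{prop:E8H4} exhibits $B$ as positive definite, $A^t B A$ is nonsingular precisely when $A$ has full column rank, i.e.\ when weak Lefschetz holds.

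With that reduction in hand, I would analyze, for each $0 \le i < \lfloor r/2 \rfloor$, the matrix of $\bar\rho \colon (\rel)^i \to (\rel)^{i+1}$ in the relative Schubert basis. By Proposition \ref{prop:matentr} (specialized to path length $s=1$), this matrix has entries that are either $0$ or a strictly positive weight $\check\beta(\bar u(\bar\rho)) > 0$ coming from a single edge $\bar u \xrightarrow{\beta} \bar v$ of the Hasse diagram of $W^\Theta$. To prove injectivity it suffices to exhibit, for each $i$, a subset $S_i \subseteq V^{i+1}$ with $|S_i| = |V^i|$ such that the restricted square submatrix has nonzero determinant. By Proposition \ref{prop:GV} this determinant is a signed sum over perfect matchings $V^i \to S_i$ in the corresponding bipartite subgraph, and Lemma \ref{lem:easy} makes the sum nonzero as soon as $S_i$ can be chosen so that all contributing matchings share a common sign (for instance when the matching is unique).

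For type $H_4$ with $W_\Theta$ of type $H_3$, the Hasse diagram of $W^\Theta$ is small enough that such subsets $S_i$ can be identified by direct inspection, much as in Propositions \ref{prop:altypeslefschetz} and \ref{prop:FEE}. The main obstacle is type $E_8$ with $W_\Theta$ of type $E_7$: the Hasse diagram of $E_8/E_7$ is large, and the dimensions $|V^i|$ grow substantially before the middle. Here I would compute $\bar\rho$ explicitly in coordinates, as was done in Proposition \ref{prop:FEE} for $E_7$, so that all edge weights $\check\alpha(\bar\rho)$ take a small number of explicit positive values; then I would work degree by degree, selecting for each $i$ a target set $S_i$ so that the bipartite subgraph admits essentially one matching (or a collection of matchings of uniform sign), invoking Lemma \ref{lem:easy} to conclude nondegeneracy.

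Assuming the combinatorial verification goes through in every intermediate degree for both types, the weak Lefschetz property is established, and Lemma \ref{prop:E8H4} then upgrades this to the strong Lefschetz property for $\rel$, proving Proposition \ref{prop:H4E8lef}. I expect the heavy lifting to be the case-by-case inspection in type $E_8$, which is tractable only because the explicit form of $\bar\rho$ makes every edge weight a positive rational number known in advance.
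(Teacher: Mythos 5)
Your proposal follows essentially the same route as the paper: reduce strong to weak Lefschetz via Lemma \ref{prop:E8H4}, then verify full rank of the degree-by-degree matrices of $\bar{\rho}$ by choosing square submatrices and applying Proposition \ref{prop:GV} and Lemma \ref{lem:easy}, with explicit edge weights computed from the root systems (in $E_8$, where $\bar{\rho}$ is proportional to the highest root). The only nuance you underplay is that in some degrees the vertex-disjoint path systems do \emph{not} share a common sign, and the paper handles these either by direct computation of the weights (the starred $H_4$ cases) or by observing that away from the middle degree all $E_8$ weights equal $1$, so an \emph{odd} number of $\pm 1$ terms cannot cancel --- a parity trick your uniform-sign mechanism would need to be supplemented by.
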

\begin{proof}
By Proposition \ref{prop:GV} and Lemma \ref{prop:E8H4} we need only check that the weighted path matrices in $G$ from $V_i$ to $V_{i+1}$ have full rank for $0\leq i\leq \lfloor\frac{r}{2}\rfloor$.  Given below are tables showing the non-trivial bipartite graphs in each of the types $H_4$ and $E_8$.  The second and fifth columns show the bipartite graphs between $V_{i-1}$ and $V_i$ (on top), where $i$ is listed in the adjacent column (to the left):  the empty circles with dashed lines indicate that we are ignoring that corresponding row in our weighted path matrix to exhibit a maximal non-singular submatrix.  The numbers in the third and sixth columns enumerate the path systems in the bipartite graph to the left of the entry:  the symbol ``($\setminus\circ$)" indicates that we are not including the empty circle and dashed lines in our count, and the symbol ``(same $\pm$)" indicates that all of the enumerated path systems have the same sign.    

Table \ref{tab:H4} shows the directed bipartite graphs in type $H_4$ giving the matrices for the Lefschetz maps $\bar{\rho}\colon\left(\rel\right)^i\rightarrow\left(\rel\right)^{i+1}$ for $7\leq i\leq 22$.  Note that the matrices with corresponding entries $1$, $1$ ($\setminus\circ$), or 2 (same $\pm$) must have full rank and the computation of their edge weights is not required.  Those entries decorated with a $\ast$ can be verified by direct computation:  The weights for these bipartite graphs have been computed using the root system given in \cite{Hum} page 47, where $a=\frac{1+\sqrt{5}}{4}$ and $b=\frac{-1+\sqrt{5}}{4}$ (as before, the unlabelled edges (in the starred cases) are assumed to have weight $1$).  

\begin{table}
\begin{center}
\begin{tabular}{|c|r|c||c|r|c|p{1in}|}
\hline
22 & $\xyGgraph{*=0{\bullet}\ar@{-}[d] & *=0{\bullet}\ar@{-}[d]\ar@{-}[ld] & *=0{\bullet}\ar@{-}[ld]\ar@{-}[d]|{2a}\ar@{-}[rd]|<<<<{2b} &*=0{\bullet}\ar@{-}[d]|{2a}\ar@{-}[ld]|<<<<{2b}\\
*=0{\bullet} & *=0{\bullet} & *=0{\bullet} & *=0{\bullet}\\}
$ & 2 $\ast$ & 14 & $\xyGraph{& *=0{\bullet}\ar@{-}[rd]\ar@{-}[rrd]& *=0{\bullet}\ar@{-}[ld] & *=0{\bullet}\ar@{-}[d]\ar@{-}[lld]\\
& *=0{\bullet} & *=0{\bullet} & *=0{\bullet}\\}
$ & 1\\
&&&&&\\
\hline
21 & $\xyGraph{*=0{\bullet}\ar@{-}[d]& *=0{\bullet}\ar@{-}[d]\ar@{-}[ld]\ar@{-}[rrd]& *=0{\bullet}\ar@{-}[ld]\ar@{-}[d] & *=0{\bullet}\ar@{-}[d]\ar@{-}[ld]\\
*=0{\bullet} & *=0{\bullet} & *=0{\bullet} & *=0{\bullet}\\}
$ & 2 (same $\pm$) & 13 & $\xyGraph{& *=0{\bullet}\ar@{-}[rd]& *=0{\bullet}\ar@{-}[ld]\ar@{-}[rd] & *=0{\bullet}\ar@{-}[d]\ar@{-}[ld]\\
& *=0{\bullet} & *=0{\bullet} & *=0{\bullet}\\}
$ & 1 \\
&&&&&\\
\hline
20 & $\xyGgraph{*=0{\bullet}\ar@{-}[rd]|<<<<{2a}\ar@{-}[rrrd]|>>>>{2b}& *=0{\bullet}\ar@{-}[d]\ar@{-}[ld]|>>>>>{2a}\ar@{-}[rd]|>>>>{2b}& *=0{\bullet}\ar@{-}[lld]\ar@{-}[d]|<<<<{2a} & *=0{\bullet}\ar@{-}[d]|{2a}\ar@{-}[ld]|{2b}\\
*=0{\bullet} & *=0{\bullet} & *=0{\bullet} & *=0{\bullet}\\}
$& 3 $\ast$ & 12 & $\xyGgraph{& *=0{\bullet}\ar@{-}[d]|{2a}\ar@{-}[rrd]|<<<<{2b}& *=0{\bullet}\ar@{-}[d]|>>>>{2a}\ar@{-}[rd] & *=0{\bullet}\ar@{-}[d]|{2a}\ar@{-}[ld]|<<<<<{2b}\ar@{-}[lld]\\
& *=0{\bullet} & *=0{\bullet} & *=0{\bullet}\\}
$ & 3 $\ast$\\
&&&&&\\
\hline
19 & $\xyGraph{*=0{\bullet}\ar@{-}[d]\ar@{-}[rd]\ar@{-}[rrd]& *=0{\bullet}\ar@{-}[d]\ar@{-}[rrd]& *=0{\bullet}\ar@{-}[rd]\ar@{-}[d] & *=0{\bullet}\ar@{-}[d]\\
*=0{\bullet} & *=0{\bullet} & *=0{\bullet} & *=0{\bullet}\\}
$& 1 & 11 & $\xyGraph{& *=0{\bullet}\ar@{-}[rd]& *=0{\bullet}\ar@{-}[ld]\ar@{-}[rd] & *=0{\bullet}\ar@{-}[d]\ar@{-}[ld]\\
& *=0{\bullet} & *=0{\bullet} & *=0{\bullet}\\}
$ & 1\\
&&&&&\\
\hline
18 & $\xyGraph{*=0{\bullet}\ar@{-}[d]\ar@{-}[rd]& *=0{\bullet}\ar@{-}[rd]\ar@{-}[ld]\ar@{-}[rrd]& *=0{\bullet}\ar@{-}[ld]\ar@{-}[d]\ar@{-}[rd] & *=0{\bullet}\ar@{-}[d]\\
*=0{\bullet} & *=0{\bullet} & *=0{\bullet} & *=0{\bullet}\\}
$& 2 (same $\pm$) & 10 & $\xyGraph{& *=0{\circ}\ar@{--}[rrd]& *=0{\bullet}\ar@{-}[d]& *=0{\bullet}\ar@{-}[d]\ar@{-}[ld]\\
&& *=0{\bullet} & *=0{\bullet}\\}
$ & 1 ($\setminus\circ$)\\
&&&&&\\
\hline
17 & $\xyGraph{*=0{\bullet}\ar@{-}[rd]\ar@{-}[rrd]& *=0{\bullet}\ar@{-}[ld]\ar@{-}[rd]& *=0{\bullet}\ar@{-}[ld]\ar@{-}[rd] & *=0{\bullet}\ar@{-}[d]\ar@{-}[ld]\\
*=0{\bullet} & *=0{\bullet} & *=0{\bullet} & *=0{\bullet}\\}
$ & 2 (same $\pm$) & 9 & $\xyGraph{& & *=0{\bullet}\ar@{-}[d]& *=0{\bullet}\ar@{-}[d]\ar@{-}[ld]\\
&& *=0{\bullet} & *=0{\bullet}\\}
$ & 1\\
&&&&&\\
\hline
16 & $\xyGraph{*=0{\circ}\ar@{--}[rrd]& *=0{\bullet}\ar@{-}[d]\ar@{-}[rrd]& *=0{\bullet}\ar@{-}[rd]\ar@{-}[d] & *=0{\bullet}\ar@{-}[d]\\
& *=0{\bullet} & *=0{\bullet} & *=0{\bullet}\\}
$ & 1 & 8 & $\xyGraph{& & *=0{\bullet}\ar@{-}[d]& *=0{\bullet}\ar@{-}[d]\ar@{-}[ld]\\
&& *=0{\bullet} & *=0{\bullet}\\}
$ & 1\\
&&&&&\\
\hline
15 & $\xyGraph{& *=0{\bullet}\ar@{-}[d]\ar@{-}[rrd]& *=0{\bullet}\ar@{-}[d] & *=0{\bullet}\ar@{-}[d]\ar@{-}[ld]\\
& *=0{\bullet} & *=0{\bullet} & *=0{\bullet}\\}
$ & 1 & 7 & $\xyGraph{& & *=0{\bullet}\ar@{-}[d]\ar@{-}[rd]& *=0{\bullet}\ar@{-}[d]\\
&& *=0{\bullet} & *=0{\bullet}\\}
$ & 1 \\
&&&&&\\
\hline
\end{tabular}

\end{center}
\caption{weak Lefschetz property for $H_4\big/H_3$}
\label{tab:H4}
\end{table}

Table \ref{fig:E8} shows the directed bipartite graphs in type $E_8$ giving the matrices for the Lefschetz maps $\bar{\rho}\colon\left(\rel\right)^i\rightarrow\left(\rel\right)^{i+1}$ for $7\leq i\leq 28$.  Again, the matrices with corresponding entries $1$, $1$ ($\setminus\circ$), or $2$ (same $\pm$) must have full rank.  It turns out that \emph{all} of the edges appearing in Table \ref{fig:E8} have weight $1$, as we show presently.  This implies that the matrices corresponding to \emph{odd} entries also have full rank.

\begin{table}
\begin{center}
\begin{tabular}{|c|r|c||c|r|c|p{1in}|}
\hline
28 & $\xyGraph{*=0{\circ}\ar@{--}[rrrrrrd] & *=0{\bullet}\ar@{-}[d]\ar@{-}[rd] & *=0{\bullet}\ar@{-}[ld]\ar@{-}[rrrrd] & *=0{\bullet}\ar@{-}[d] & *=0{\bullet}\ar@{-}[lld]\ar@{-}[ld]\ar@{-}[d] & *=0{\bullet}\ar@{-}[ld]\ar@{-}[d] & *=0{\bullet}\ar@{-}[ld] & *=0{\bullet}\ar@{-}[d]\ar@{-}[ld]\\
& *=0{\bullet} & *=0{\bullet} & *=0{\bullet} & *=0{\bullet} & *=0{\bullet} & *=0{\bullet} & *=0{\bullet}\\}
$ & 1 ($\setminus\circ$) & 17 & $\xyGraph{*=0{\bullet}\ar@{-}[d] & *=0{\bullet}\ar@{-}[rd]\ar@{-}[d] & *=0{\bullet}\ar@{-}[lld]\ar@{-}[d] & *=0{\bullet}\ar@{-}[d]\ar@{-}[lld] & *=0{\bullet}\ar@{-}[d]\ar@{-}[ld]\ar@{-}[lld]\\
*=0{\bullet} & *=0{\bullet} & *=0{\bullet} & *=0{\bullet} & *=0{\bullet}\\}
$ & 1\\
&&&&&\\
\hline
27 & $\xyGraph{*=0{\bullet}\ar@{-}[rrd]\ar@{-}[rrrrrrd] & *=0{\bullet}\ar@{-}[d]\ar@{-}[rd]\ar@{-}[rrd] & *=0{\bullet}\ar@{-}[ld]\ar@{-}[rrd] & *=0{\bullet}\ar@{-}[d]\ar@{-}[rd]\ar@{-}[rrd] & *=0{\bullet}\ar@{-}[rd] & *=0{\bullet}\ar@{-}[llllld] & *=0{\bullet}\ar@{-}[lllllld]\ar@{-}[d]\\
*=0{\bullet} & *=0{\bullet} & *=0{\bullet} & *=0{\bullet} & *=0{\bullet} & *=0{\bullet} & *=0{\bullet}\\}
$ & 2 (same $\pm$) & 16 & $\xyGraph{*=0{\bullet}\ar@{-}[rd] & *=0{\bullet}\ar@{-}[rd] & *=0{\circ}\ar@{--}[ld]\ar@{--}[d]\ar@{--}[rd] & *=0{\bullet}\ar@{-}[rd]\ar@{-}[ld] & *=0{\bullet}\ar@{-}[d]\ar@{-}[ld]\\
& *=0{\bullet} & *=0{\bullet} & *=0{\bullet} & *=0{\bullet}\\}
$ & 1 ($\setminus\circ$)\\
&&&&&\\
\hline
26 & $\xyGraph{*=0{\bullet}\ar@{-}[d] & *=0{\bullet}\ar@{-}[d]\ar@{-}[rd] & *=0{\bullet}\ar@{-}[ld]\ar@{-}[rrrrd]\ar@{-}[rd] & *=0{\bullet}\ar@{-}[d]\ar@{-}[rd]\ar@{-}[ld] & *=0{\bullet}\ar@{-}[rd]\ar@{-}[lld] & *=0{\bullet}\ar@{-}[ld]\ar@{-}[d] & *=0{\bullet}\ar@{-}[lllllld]\ar@{-}[d]\\
*=0{\bullet} & *=0{\bullet} & *=0{\bullet} & *=0{\bullet} & *=0{\bullet} & *=0{\bullet} & *=0{\bullet}\\}
$& 3 & 15 & $\xyGraph{*=0{\bullet}\ar@{-}[rd]\ar@{-}[d] & *=0{\bullet}\ar@{-}[ld]\ar@{-}[rd] & *=0{\bullet}\ar@{-}[d]\ar@{-}[ld] & *=0{\bullet}\ar@{-}[d]\ar@{-}[ld]\\
*=0{\bullet} & *=0{\bullet} & *=0{\bullet} & *=0{\bullet}\\}
$ & 2 (same $\pm$)\\
&&&&&\\
\hline
25 & $\xyGraph{*=0{\bullet}\ar@{-}[d] & *=0{\bullet}\ar@{-}[rd]\ar@{-}[rrrd] & *=0{\bullet}\ar@{-}[ld]\ar@{-}[d]\ar@{-}[rd] & *=0{\bullet}\ar@{-}[rrrd]\ar@{-}[rd]\ar@{-}[ld] & *=0{\bullet}\ar@{-}[d]\ar@{-}[ld] & *=0{\bullet}\ar@{-}[lld] & *=0{\bullet}\ar@{-}[lllllld]\ar@{-}[d]\ar@{-}[ld]\\
*=0{\bullet} & *=0{\bullet} & *=0{\bullet} & *=0{\bullet} & *=0{\bullet} & *=0{\bullet} & *=0{\bullet}\\}
$& 2 (same $\pm$) & 14 & $\xyGraph{*=0{\bullet}\ar@{-}[rd] & *=0{\bullet}\ar@{-}[ld]\ar@{-}[d] & *=0{\bullet}\ar@{-}[d]\ar@{-}[ld] & *=0{\bullet}\ar@{-}[d]\ar@{-}[ld]\\
*=0{\bullet} & *=0{\bullet} & *=0{\bullet} & *=0{\bullet}\\}
$ & 1\\
&&&&&\\
\hline
24 & $\xyGraph{*=0{\bullet}\ar@{-}[d]\ar@{-}[rd] & *=0{\bullet}\ar@{-}[rd]\ar@{-}[rrd] & *=0{\bullet}\ar@{-}[d]\ar@{-}[rrd]\ar@{-}[rrrd] & *=0{\bullet}\ar@{-}[d]\ar@{-}[rd] & *=0{\bullet}\ar@{-}[d]\ar@{-}[rrd] & *=0{\bullet}\ar@{-}[d]\ar@{-}[llllld] & *=0{\bullet}\ar@{-}[llllld]\ar@{-}[d]\ar@{-}[ld]\\
*=0{\bullet} & *=0{\bullet} & *=0{\bullet} & *=0{\bullet} & *=0{\bullet} & *=0{\bullet} & *=0{\bullet}\\}
$& 5 & 13 & $\xyGraph{*=0{\bullet}\ar@{-}[d] & *=0{\bullet}\ar@{-}[ld]\ar@{-}[d] & *=0{\bullet}\ar@{-}[d]\ar@{-}[ld] & *=0{\bullet}\ar@{-}[d]\ar@{-}[ld]\\
*=0{\bullet} & *=0{\bullet} & *=0{\bullet} & *=0{\bullet}\\}
$ & 1\\
&&&&&\\
\hline
23 & $\xyGraph{*=0{\bullet}\ar@{-}[d] & *=0{\bullet}\ar@{-}[ld]\ar@{-}[d] & *=0{\bullet}\ar@{-}[rd]\ar@{-}[rrd]\ar@{-}[rrrd] & *=0{\bullet}\ar@{-}[ld]\ar@{-}[rd] & *=0{\bullet}\ar@{-}[d]\ar@{-}[rrd] & *=0{\bullet}\ar@{-}[d]\ar@{-}[llllld]\ar@{-}[rd] & *=0{\bullet}\ar@{-}[llllld]\ar@{-}[d]\\
*=0{\bullet} & *=0{\bullet} & *=0{\bullet} & *=0{\bullet} & *=0{\bullet} & *=0{\bullet} & *=0{\bullet}\\}
$ & 1 & 12 & $\xyGraph{*=0{\circ}\ar@{--}[rd] & *=0{\bullet}\ar@{-}[rd]\ar@{-}[d] & *=0{\bullet}\ar@{-}[d]\ar@{-}[rd] & *=0{\bullet}\ar@{-}[d]\\
& *=0{\bullet} & *=0{\bullet} & *=0{\bullet}\\}
$ & 1 ($\setminus\circ$)\\
&&&&&\\
\hline
22 & $\xyGraph{*=0{\bullet}\ar@{-}[rd]\ar@{-}[rrd] & *=0{\bullet}\ar@{-}[rd] & *=0{\bullet}\ar@{-}[rd] & *=0{\bullet}\ar@{-}[rd]\ar@{-}[rrd] & *=0{\circ}\ar@{--}[ld]\ar@{--}[rrd]\ar@{--}[d] & *=0{\bullet}\ar@{-}[d]\ar@{-}[lllld]\ar@{-}[rd] & *=0{\bullet}\ar@{-}[lllld]\ar@{-}[d]\\
& *=0{\bullet} & *=0{\bullet} & *=0{\bullet} & *=0{\bullet} & *=0{\bullet} & *=0{\bullet}\\}
$ & 1 ($\setminus\circ$) & 11 & $\xyGraph{*=0{\bullet}\ar@{-}[rd]\ar@{-}[d] & *=0{\bullet}\ar@{-}[d]\ar@{-}[rd] & *=0{\bullet}\ar@{-}[d]\\
*=0{\bullet} & *=0{\bullet} & *=0{\bullet}\\}
$ & 1\\
&&&&&\\
\hline
21 & $\xyGraph{*=0{\bullet}\ar@{-}[rd]\ar@{-}[d] & *=0{\bullet}\ar@{-}[d] & *=0{\bullet}\ar@{-}[d]\ar@{-}[rd] & *=0{\bullet}\ar@{-}[ld]\ar@{-}[rd] & *=0{\bullet}\ar@{-}[d]\ar@{-}[lllld]\ar@{-}[rd] & *=0{\bullet}\ar@{-}[lllld]\ar@{-}[ld]\ar@{-}[lld]\\
*=0{\bullet} & *=0{\bullet} & *=0{\bullet} & *=0{\bullet} & *=0{\bullet} & *=0{\bullet}\\}
$& 2 (same $\pm$)  & 10 & $\xyGraph{*=0{\circ}\ar@{--}[rd] & *=0{\bullet}\ar@{-}[d]\ar@{-}[rd] & *=0{\bullet}\ar@{-}[d]\\
& *=0{\bullet} & *=0{\bullet}\\}
$ & 1 ($\setminus\circ$)\\
&&&&&\\
\hline
20 & $\xyGraph{*=0{\bullet}\ar@{-}[rd]\ar@{-}[rrrd] & *=0{\bullet}\ar@{-}[d]\ar@{-}[ld] & *=0{\bullet}\ar@{-}[d]\ar@{-}[rrd] & *=0{\bullet}\ar@{-}[llld]\ar@{-}[rd] & *=0{\bullet}\ar@{-}[d]\ar@{-}[llld]\ar@{-}[rd] & *=0{\bullet}\ar@{-}[d]\ar@{-}[lld]\\
*=0{\bullet} & *=0{\bullet} & *=0{\bullet} & *=0{\bullet} & *=0{\bullet} & *=0{\bullet}\\}
$& 3 & 9 & $\xyGraph{*=0{\bullet}\ar@{-}[d]\ar@{-}[rd] & *=0{\bullet}\ar@{-}[d]\\
*=0{\bullet} & *=0{\bullet}\\}
$ & 1\\
&&&&&\\
\hline
 19 & $\xyGraph{*=0{\bullet}\ar@{-}[rd] & *=0{\bullet}\ar@{-}[d]\ar@{-}[rrd] & *=0{\bullet}\ar@{-}[d]\ar@{-}[rrd] & *=0{\bullet}\ar@{-}[llld]\ar@{-}[d] & *=0{\bullet}\ar@{-}[d]\ar@{-}[llld]\ar@{-}[rd] & *=0{\bullet}\ar@{-}[d]\ar@{-}[lld]\\
*=0{\bullet} & *=0{\bullet} & *=0{\bullet} & *=0{\bullet} & *=0{\bullet} & *=0{\bullet}\\}
$& 1 & 8 & $\xyGraph{*=0{\bullet}\ar@{-}[d]\ar@{-}[rd] & *=0{\bullet}\ar@{-}[d]\\
*=0{\bullet} & *=0{\bullet}\\}
$ & 1\\
&&&&&\\
\hline
18 & $\xyGraph{*=0{\bullet}\ar@{-}[rd] & *=0{\bullet}\ar@{-}[rd]\ar@{-}[rrd] & *=0{\bullet}\ar@{-}[rrd] & *=0{\bullet}\ar@{-}[lld]\ar@{-}[d] & *=0{\circ}\ar@{--}[d]\ar@{--}[lld]\ar@{--}[rd] & *=0{\bullet}\ar@{-}[d]\ar@{-}[lld]\\
& *=0{\bullet} & *=0{\bullet} & *=0{\bullet} & *=0{\bullet} & *=0{\bullet}\\}
$ & 1 ($\setminus\circ$) & 7 & $\xyGraph{*=0{\bullet}\ar@{-}[d]\ar@{-}[rd] & *=0{\bullet}\ar@{-}[d]\\
*=0{\bullet} & *=0{\bullet}\\}
$ & 1\\
&&&&&\\
\hline
\end{tabular}
\end{center}
\caption{weak Lefschetz property for $E_8\big/E_7$}
\label{fig:E8}
\end{table}

Using the root system (and simple system) of type $E_8$ described in \cite{Hum} page 43, we compute $\bar{\rho}$ to be a multiple of the longest root $\theta$.  Hence after a rescaling we can take our Lefschetz element to be $\theta$.  It is straightforward to verify that  
$$\check{\gamma}(\theta)=\begin{cases}
2 & \text{if} \ \ \gamma=\theta\\
1 & \text{otherwise}\\
\end{cases}$$
for all $\gamma\in\Phi^+\setminus\Phi^+_\Theta$.  Hence the only weights appearing on the directed edges of $G$ are $1$ and $2$.  
\begin{claim*}
The weight $2$ appears only in the middle degree.
\end{claim*}  
To verify the claim, we define the \emph{height} of a root $$h(\beta)\coloneqq\sum_{\gamma\in\Delta}c_\gamma$$
where $\beta=\sum_{\gamma\in\Delta}c_\gamma\cdot\gamma$.  Now suppose that $\xyto{\bar{w}\ar[r]^-\alpha & \bar{w}'}$ for some $\bar{w}$, $\bar{w}'\in W^\Theta$ and some $\alpha\in\Phi^+$ such that $\bar{w}^{-1}(\alpha)=\theta$.  Then $\alpha=\bar{w}(\theta)\in\Phi^+$ and a standard argument reveals that $0<h(\alpha)<h(\theta)-\ell(\bar{w})$.  Since $h(\theta)=29$ we conclude that $\ell(\bar{w})<29$.  By the symmetry of the Bruhat order on $W^\Theta$ given by \eqref{eq:symmetric2}, we conclude that $\ell(\bar{w})=28$ which establishes the claim.  

This completes the proof of Proposition \ref{prop:H4E8lef}.
\end{proof}

\section{Proof of Theorem \ref{thm:coinvlef}}
We are now ready to put it all together.  Given any parabolic subgroup $W_\Theta\subseteq W$, there is a natural inclusion
$$\pi\colon S^{W_\Theta}_W\rightarrow S_W$$
with respect to which $S_W$ is a free $\rel$-module by Proposition \ref{thm:coinvlh}.
There is a natural surjective ring homomorphism
$$\iota\colon S_W\rightarrow S_{W_\Theta}$$
whose kernel is the ideal $(\rel)^+\cdot S_W$ by Proposition \ref{thm:coinvlh}.  Thus if $S_{W_\Theta}$ and $\rel$ both have the strong Lefschetz property, then Theorem \ref{thm:fibun} implies that $S_W$ also has the strong Lefschetz property.  

\begin{proof}[Proof of Theorem \ref{thm:coinvlef}]
We first show it for the infinite families (i.e. classical and dihedral types).  
Assume $W$ is of type $A$, $B$, $D$ or $I_2(m)$.  We argue by induction on the rank of $W$ (i.e. $\dim_\R\left(\spa\left\{\Phi\right\}\right)$).  The base case is trivial (i.e. $W=\{e\}$ and $\Phi=\emptyset$).  Assume the assertion holds for finite reflection groups of rank $<n$ and let $W$ be a finite reflection group of rank $n$ and of one of the above types.  Then by Proposition \ref{prop:altypeslefschetz}, there exists a parabolic subgroup $W_\Theta\subset W$ such that $S_W^{W_\Theta}$ has the strong Lefschetz property.  By the induction hypothesis, $S_{W_\Theta}$ also has the strong Lefschetz property.  Therefore by Theorem \ref{thm:fibun}, $S_W$ must have the strong Lefschetz property and we are done by induction.

We can now inductively build on this result for the infinite families to get the result for the remaining types.

If $W$ is of type $H_3$ (resp.\ $F_4$, $E_6$) then by Proposition \ref{prop:altypeslefschetz} (resp.\ Proposition \ref{prop:FEE}), there is a parabolic subgroup $W_\Theta\subset W$ of type $I_2(5)$ (resp.\ $B_3$, $D_5$) such that $S_W^{W_\Theta}$ has the strong Lefschetz property.  By the preceding argument, $S_{W_\Theta}$ also has the strong Lefschetz property.  Therefore by Theorem \ref{thm:fibun}, $S_W$ must also have the strong Lefschetz property.

If $W$ is of type $E_7$ then by Proposition \ref{prop:FEE} there is a parabolic subgroup $W_\Theta\subset W$ of type $E_6$ such that $\rel$ has the strong Lefschetz property.  By the preceding argument $S_{W_\Theta}$ also has the strong Lefschetz property.  Therefore by Theorem \ref{thm:fibun}, $S_W$  must also have the strong Lefschetz property.

If $W$ is of type $E_8$ (resp.\ $H_4$) then by Proposition \ref{prop:H4E8lef} there is a parabolic subgroup $W_\Theta\subset W$ of type $E_7$ (resp.\ $H_3$) such that $S_W^{W_\Theta}$ has the strong Lefschetz property.  By the preceding arguments the coinvariant ring $S_{W_\Theta}$ also has the strong Lefschetz property.  Therefore by Theorem \ref{thm:fibun}, $S_W$ must also have the strong Lefschetz property. 
This completes the proof of Theorem \ref{thm:coinvlef}.
\end{proof}

\section{Concluding Remarks}
During the preparation of this manuscript, I discovered that Proctor \cite{P2} also gives a proof of the strong Lefschetz property for $\rel$ (with respect to certain maximal parabolic subgroups $W_\Theta\subset W$) in type $A_n$ using weighted path sums and Proposition \ref{prop:GV}.  

A biproduct of Theorem \ref{thm:coinvlef} is that the relative coinvariant rings $\rel$ also have the strong Lefschetz property with respect to \emph{some} parabolic subgroup $W_\Theta\subset W$.  It would be nice to know if this holds for \emph{any} parabolic subgroup.    

In proving Proposition \ref{prop:H4E8lef}, we have actually proved that the relative coinvariant rings for the quotients $E_8\big/E_7$ and $H_4\big/H_3$ not only have the strong Lefschetz property, but satisfy the stronger \emph{Hodge-Riemann bilinear relations}.  In the crystallographic cases this is expected since the (relative) coinvariant rings are cohomology rings of smooth projective varieties.  In the $H_4\big/H_3$ case, this result seems to be new.  It would be interesting to know if the Hodge-Riemann bilinear relations hold for all coinvariant rings.  

It would also be interesting to see to what extent Theorem \ref{thm:coinvlef} extends to coinvariant rings of complex finite reflection groups.
\bibliographystyle{plain}
\bibliography{coinvariant}
\end{document}